\numberwithin{equation}{section}
\newcommand{\rank}{\operatorname{rank}}
\renewcommand{\epsilon}{\varepsilon}
\theoremstyle{plain}
\newtheorem{corollary}{Corollary}[section]
\newtheorem{theorem}[corollary]{Theorem}
\newtheorem{lemma}[corollary]{Lemma}
\newtheorem{claim}[corollary]{Claim}
\newtheorem*{conjecture*}{Conjecture}
\newtheorem*{theorem*}{Theorem}
\newtheorem*{lemma*}{Lemma}
\newtheorem*{definition*}{Definition}
\newtheorem*{corollary*}{Corollary}
\theoremstyle{definition}
\newtheorem{definition}[corollary]{Definition}
\theoremstyle{definition}
\newtheorem{remark}[corollary]{Remark}
\newcommand{\E}{\mathbb E}
\newcommand{\T}{\mathbb T}
\renewcommand{\P}{\mathbb P}
\newcommand{\F}{\mathbb F}
\newcommand{\R}{\mathbb R}
\newcommand{\cM}{\mathcal M}
\newcommand{\cF}{\mathcal F}
\newcommand{\cI}{\mathcal I}
\newcommand{\cG}{\mathcal G}
\newcommand{\cO}{\mathcal O}
\newcommand{\cT}{\mathcal T}
\newcommand{\cW}{\mathcal W}
\newcommand{\bv}{\mathbf v}
\newcommand{\bT}{\mathbf T}
\newcommand{\fa}{\mathfrak a}
\newcommand{\fb}{\mathfrak b}
\newcommand{\be}{\begin{eqnarray}} 
\newcommand{\ee}{\end{eqnarray}}
\newcommand{\eps}{\epsilon}
\DeclareMathOperator{\Tr}{Tr}
\DeclareMathOperator{\res}{\upharpoonright}
\DeclareMathOperator{\ima}{Im}
\DeclareMathOperator{\id}{Id}
\title{Local limits of determinantal processes}
\author{Asaf Nachmias\footnote{Department of mathematical sciences, Tel Aviv university. Email: asafnach@tauex.tau.ac.il}$\,\,$ and Yuval Peled\footnote{Einstein institute of mathematics, Hebrew university of Jerusalem. Email: yuval.peled@mail.huji.ac.il}}
\begin{document}

\maketitle

\begin{abstract} Let $H_n$ be the row space of a signed adjacency matrix of a $C_4$-free bipartite bi-regular graph in which one part has degree  $d(n)\to\infty$ and the other part has degree $k+1$ where $k\geq 1$ is a fixed integer. We show that the local limit as $n\to \infty$ of the determinantal process corresponding to the orthogonal projection on $H_n$ is a variant of a Poisson$(k)$ branching process conditioned to survive. 

This setup covers a wide class of determinantal processes such as uniform spanning trees, Kalai's determinantal hypertrees, hyperforests in regular polytopal complexes, discrete Grassmanians and incidence matroids, as long as their degree tends to $\infty$.
\end{abstract}

\section{Introduction}
We study the local limits of determinantal processes. To do this one must have an adjacency relation between the process' elements which general determinantal processes do not posses. Hence we restrict our setting to processes which naturally have such a structure. This restricted setting may seem unfamiliar at a first glance, but in fact it includes many well studied examples such as the uniform spanning tree, Kalai's determinantal hypertrees \cite{Kalai83} and many more (see \cref{sec:examples}). 

In this setting, the determinantal process is a probability measure on subsets of cardinality $r$ of some finite ground set $U$. We think of $U$ as the right sided vertex subset of a bipartite graph $G=(V,U,E)$ where $V\uplus U$ are the left and right vertex subsets, respectively, and $E$ is the edge set. For a random sample $\cT \subset U$ drawn according to this measure, we study the subgraph $G[\cT]$ of $G$ induced by $V\cup \cT$. Assuming that $G$ is $C_4$-free and $(d,k+1)$-bi-regular with fixed $k$ and $d \to \infty$, we prove that the rooted random graph $(G[\cT],o)$, where $o \in V$ is chosen uniformly at random, converges in distribution with respect to the local topology of rooted graphs to a variant of a Poisson$(k)$ branching process, which we denote by $\T_k$. Notably, the limit depends only on $k$.

Our main result (\cref{mainresult}) recovers the main result of \cite{NP22} regarding the local limit uniform spanning trees of $d$-regular graphs and significantly extends the result in \cite{meszaros2021local} on the local limit of Kalai's determinantal hypertrees to include determinantal hyperforests of any high-degree polytopal complex. More importantly, we provide a unified linear algebraic approach to proving such local limits which include a very wide class of determinantal processes, see \cref{sec:examples} below for examples. 


\subsection{Setup and main result}\label{sec:setup} Suppose that $G_n=(V_n,U_n,E_n)$ is a sequence of simple bipartite graphs where $V_n \uplus U_n$ is the vertex set and $E_n$ is the edge set. For notational convenience we frequently drop the subscript $n$ and denote $G=(V,U,E)$ instead of $G_n=(V_n,U_n,E_n)$. We assume that  $|V|=n$ and $|U|=m$ and that $G$ is $C_4$-free (that is, any two vertices $G$ have at most one common neighbor) and that it is a $(d,k+1)$-bi-regular graph, i.e., the degree of $v\in V$ is $d$ and the degree of $u\in U$ is $k+1$. So $m=nd/(k+1)$. We assume that $k\geq 1$ is a fixed integer and that $d \to \infty$ as $n\to \infty$

Let $B$ be a $n\times m$ signed adjacency matrix of $G$, i.e., $B_{u,v}\in \{1,-1\}$ if $uv\in E$ and $B_{u,v}=0$ otherwise. Denote $r=\rank(B) \leq \min (n,m)$. Let $H \subset \R^{m}$ be the row space of $B$ and denote by $P_H:\R^{m} \to H$ the orthogonal projection operator onto $H$ which we treat as an $m\times m$ matrix represented in the standard basis of $\R^{m}$. Given a subset $\cT\subset U$, we follow Lyons' notation \cite{Lyons03}, and denote $P_H \res \cT$ for the submatrix of $P_H$ whose rows and columns are indexed by $\cT$. The determinantal process associated with $H$ is a probability measure $\P^H$ on subsets $\cT \subset U$ of size $|\cT|=r$ given by
\be\label{eq:defph} \P^H(\cT) = \det(P_H \res \cT) \, .
\ee
To see that \eqref{eq:defph} defines a probability measure let $\{\psi_1,\ldots,\psi_r\}$ be an orthonormal basis of $H$ viewed as row vectors and let $C=(\psi_1,\ldots,\psi_r)$ be an $r\times m$ matrix; then applying Cauchy-Binet on the left hand side of the equality $\det(C C^T)=1$ yields that $\sum_{|\cT|=r} \P^H(\cT) =1$. Also, the $r\times r$ matrix $P_H\res\cT$ is nonsingular only when the columns of $B$ indexed by $\cT$ span an $r$-dimensional space. Hence, the measure $\P^H$ is supported on subsets $\cT\subset U$ such that these columns form a basis of the column space of $B$. In particular, if $\cT \subset U$ is such that $\P^H(\cT)>0$, then every $v\in V$ has a neighbor in $\cT$ (in other words, such $\cT$ must be \emph{spanning}).

Given a subset $\cT \subset U$ we denote by $G[\cT]$ the subgraph of $G$ induced by the vertices $V\cup \cT$. Our goal is to study the local limit (see below) of $G[\cT]$ as $n\to\infty$. The limiting object $\T_k$, first studied  by Linial and the second author \cite{linial2019enumeration}, is a variant of a Poisson$(k)$ branching process. Formally $\T_k$ is an infinite random rooted tree that is drawn according multi type branching process with $4$ types: $\fa$-even, $\fa$-odd, $\fb$-even and $\fb$-odd. The symbol odd/even only represents the parity of the distance from the root (intuitively, the even vertices correspond to $V$ and the odd to $U$). The root $\rho$ is of type $\fa$-even, and the distribution of the number of progeny and their types for a given type is given by the following rules, where all the Poisson$(k)$ random variables below are independent:
\begin{itemize}
    \item A particle of type $\fa$-even has 
    $1$ child of type $\fa$-odd and Poisson$(k)$ children of type $\fb$-odd.
    
    \item A particle of type $\fb$-even has  Poisson$(k)$ children of type $\fb$-odd.
        
    \item A particle of type $\fa$-odd has $k$ children all of them of type $\fa$-even.
    
    \item A particle of type $\fb$-odd has $k-1$ children of type $\fa$-even and $1$ child of type $\fb$-even.
    
 \end{itemize}

\begin{theorem} \label{mainresult} Let $k \geq 1$ be an integer, $d(n)\to \infty$ an integer sequence, $G_n=(V_n,U_n,E_n)$ a sequence of simple $C_4$-free bipartite graphs that are $(d(n), k+1)$ bi-regular, $B_n$ a $|V_n|\times |U_n|$ signed adjacency matrix of $G_n$, $\cT_n$ be drawn according to the determinantal probability measure \eqref{eq:defph} associated with the row space of $B_n$, and $o_n\in V_n$ is chosen uniformly at random independently. Then the rooted graphs $(G[\cT_n],o_n)$ converge in distribution with respect to the local topology to $(\T_k,\rho)$.    
\end{theorem}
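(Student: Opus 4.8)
The plan is to establish local convergence by the standard method of moments for local topology: it suffices to show that for every finite rooted graph $(F,\rho)$, the expected number of vertices $o \in V_n$ whose neighborhood $B_t(o,G[\cT_n])$ is isomorphic to $(F,\rho)$ converges to the corresponding probability $\P(B_t(\T_k,\rho)\cong (F,\rho))$. Equivalently, one computes the probability, for a fixed root $o$, that a prescribed finite pattern appears around $o$ in $G[\cT_n]$, and shows it converges to the branching-process probability. The key structural input is the determinantal formula \eqref{eq:defph}: inclusion of a finite set $S\subset U$ in $\cT_n$ has probability $\det(P_H\res S)$, and more generally the events ``$S\subset \cT_n$'' have determinantal correlations governed by the projection kernel $K=P_H$. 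So the entire problem reduces to understanding the local behavior of the kernel entries $K_{u,u'}$ for $u,u'\in U$ lying in a bounded neighborhood of $o$, as $d\to\infty$.

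First I would set up the local geometry: because $G_n$ is $C_4$-free and $(d,k+1)$-bi-regular, the $t$-neighborhood of a uniformly random $o\in V_n$ is, with probability $1-o(1)$, a tree — indeed it looks like the first $t$ generations of the following deterministic ``template'' tree: $o$ has $d$ neighbors in $U$, each such $u$ has $k$ further neighbors in $V$, each of those has $d-1$ further neighbors in $U$, and so on. (Short cycles through $o$ have probability $O(1/d)\to 0$ by $C_4$-freeness plus a counting argument; this is routine and I would dispatch it quickly.) Conditioned on this tree structure, the task is to compute the law of $\cT_n\cap (\text{this neighborhood})$ and show it converges to the law of the ``surviving'' part of $\T_k$ restricted to the corresponding generations. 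Here the crucial point — and this is where the bulk of the work lies — is to compute the local asymptotics of $P_H$. I would argue that $P_H = I - P_{H^\perp}$ where $H^\perp$ is the kernel of $B$ (the ``cycle space''), and that for $C_4$-free bi-regular $G$ with $d\to\infty$ the entries of $P_{H^\perp}$ localize: $(P_{H^\perp})_{u,u}\to \tfrac{1}{k+1}$ along the support (reflecting that each $u\in U$, of degree $k+1$, contributes a rank-one deficiency spread over its $k+1$ incident coordinates), while off-diagonal entries $(P_{H^\perp})_{u,u'}$ for $u\neq u'$ tend to $0$ unless $u,u'$ share a common neighbor, in which case they tend to $\pm\tfrac{1}{k+1}$ with a sign dictated by $B$; and entries at distance $\ge 2$ in $U$ vanish in the limit. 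The mechanism is that $B B^T$ is (after the $d\to\infty$ rescaling) close to $d\cdot I$ on $V$, so $B^T(BB^T)^{+}B \approx \tfrac1d B^TB$, and $\tfrac1d B^TB$ is exactly the local incidence operator whose entries are $0,\pm1$ supported on pairs sharing a neighbor. Making this precise — controlling the error in $(BB^T)^{+}\approx \tfrac1d I$ uniformly enough to read off the local limit of a full bounded neighborhood, not just a single entry — is the main analytic obstacle, and I expect it requires a careful finite-range/Neumann-series expansion of $(BB^T)^{+}$ together with the $C_4$-free hypothesis to kill the cross terms.

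Next, granted the kernel asymptotics, I would translate them into the branching description. By the determinantal formula, for a set of ``slots'' at generation $1$ (the $d$ potential $U$-neighbors $u_1,\dots,u_d$ of $o$), the marginal law of which $u_i$ lie in $\cT_n$ is, to leading order, that of independent Bernoulli$(1-\tfrac{1}{k+1}) = \tfrac{k}{k+1}$ trials among $d$ of them — but with the crucial global constraint that $\cT_n$ is spanning, which forces at least one neighbor of $o$ to be selected, i.e. it is the conditioning ``to survive.'' More carefully: $o$ contributes to $H$ a single vector, and the determinantal process restricted near $o$ behaves like independent inclusion with a reweighting that pins exactly one ``ancestral'' child (the $\fa$-line) while the others are free Poisson$(k)$-many $\fb$-children — because $d\cdot\tfrac{1}{k+1}$ of the $d$ slots are typically selected, and $\mathrm{Bin}(d-1,\tfrac{k}{k+1})$-type counts at the relevant secondary vertices converge to Poisson$(k)$ once one correctly accounts for the degree-$(k+1)$ constraint on each selected $u$ (which removes exactly one degree of freedom, explaining the $k$ vs.\ $k-1$ distinction between $\fa$-odd and $\fb$-odd particles). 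I would prove this generation-by-generation: condition on the pattern up to generation $t$, use the determinantal conditioning formula (conditioning a determinantal process on including/excluding a set yields another determinantal process with an explicitly updated kernel), show the updated kernel still localizes, and extract the generation-$(t+1)$ offspring law, which matches the four transition rules for $\T_k$ exactly. Combining the tree-neighborhood estimate, the kernel localization, and this inductive offspring computation, and invoking the second-moment/ concentration bound to pass from expected counts to convergence in distribution, completes the proof. The delicate bookkeeping is the sign cancellations and the exact Poisson limit with the right parameter, but the conceptual heart is the single claim $P_{H}\res(\text{bounded nbhd}) \to I - \tfrac{1}{k+1}(\text{local incidence operator})$.
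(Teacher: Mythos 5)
Your central claim about the kernel is quantitatively false, and the error propagates through the entire offspring computation. You correctly note $P_H = B^T(BB^T)^{+}B$ and that $BB^T \approx d\cdot \mathrm{Id}$ on $V$ (since $L^+ = \tfrac1d BB^T$ has most eigenvalues near $1$), which gives $P_H \approx \tfrac1d B^T B = L^-$. But then you misread the entries of $\tfrac1d B^TB$: the diagonal entry is $(L^-)_{u,u} = (k+1)/d$ (each column of $B$ has exactly $k+1$ nonzero $\pm1$ entries), and the off-diagonal entries are $\pm 1/d$ on pairs sharing a neighbor. These are all $O(1/d)$, not $O(1)$. Consequently $(P_H)_{u,u} \approx (k+1)/d \to 0$ and $(P_{H^\perp})_{u,u} \to 1$, not $\tfrac1{k+1}$; your claim ``$P_H\res(\text{bounded nbhd}) \to I - \tfrac1{k+1}(\text{local incidence operator})$'' is simply wrong, and so is the derived ``independent Bernoulli$(\tfrac{k}{k+1})$'' picture for the $d$ slots around $o$. (A sanity check: $|\cT_n|=\mathrm{rank}(B)\le |V_n|$, while $|U_n|=|V_n|d/(k+1)$, so the density of $\cT_n$ in $U_n$ is at most $(k+1)/d\to 0$; in the limit tree $\T_k$ the root has degree $1+\mathrm{Poisson}(k)$, so among the $d$ candidate neighbors of $o$ only $\approx k+1$ are chosen.) The Poisson$(k)$ offspring law arises from $d$ trials each with probability $\approx k/d$, not $k/(k+1)$, and you even contradict yourself a few lines later by writing that ``$d\cdot\tfrac1{k+1}$ of the $d$ slots are typically selected.''

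Beyond this, the part of the argument that actually produces the constants $e^{-k|I(T)|}$ and $m(I(T),T)$ is missing. Because the kernel entries are all $O(1/d)$, the determinant $\det(P_H\res U(T))$ is $\Theta(d^{-|U(T)|})$ and its leading coefficient is a combinatorial quantity, not a product of near-independent Bernoullis. The paper identifies it, via Cauchy--Binet applied to $L^-\res U(T)$, as the matching count $m(\emptyset,T)$ (this is \cref{lem:inclusion}), and gets the exclusion factor $e^{-k|I(T)|}m(I(T),T)/m(\emptyset,T)$ from a determinantal Nash--Williams-type lower bound on conditional inclusion probabilities (\cref{lem:det_NW_Analogue}, \cref{lem:exactly}), where the relevant ``flow'' vectors are built from the signed rows of $B$ and from a vector $\varphi$ in the left kernel of the tree's incidence matrix produced by the transversal analysis (\cref{clm:tree_matching_vector2}). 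Your ``condition generation by generation, the updated kernel still localizes'' sketch does not explain where these matching counts or the $\fa/\fb$ dichotomy come from, and given the wrong normalization of the kernel it cannot be patched locally. A further gap: a method-of-moments or Neumann-expansion control of $(BB^T)^{+}$ is not uniform enough here; the paper instead truncates to the eigenspace $H_I$ with eigenvalues $(\lambda_i-1)^2\le\eps$, bounds the mass of the complement via the trace identity $\Tr((L^+-\mathrm{Id})^2)=kn/d$ (which needs $C_4$-freeness), and excludes the $(\eps,\delta)$-structured vertices $U_{\eps,\delta}$ of \cref{def:struct}; this is what makes the ``local kernel $\approx L^-$'' rigorous, and none of it is present in your sketch.
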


The only notion left to recall in order to parse \cref{mainresult} is the \emph{local rooted graph topology}, first introduced in the influential paper of Benjamini and Schramm \cite{BS01}. 
We write $(G,o)$ for a graph $G$ with a distinguished vertex $o$, called the root. 
If $G$ is disconnected, then $(G,o)$ denotes the connected component of $o$ in $G$, rooted at $o$.
Let $\cG_\bullet$ be the set of all locally finite connected rooted graphs $(G,o)$ viewed up to root preserving graph isomorphism.
Given a rooted graph $(G,o) \in \cG_\bullet$ and an integer $r \geq 0$ we write $B_G(o,r)$ for the induced graph of $G$ on the vertices of graph distance at most $r$ from $o$; it is also an element of $\cG_\bullet$. We endow $\cG_\bullet$  with a distance: the distance between $(G_1,o_1)$ and $(G_2,o_2)$ is $2^{-r}$ where $r\geq 0$ is the largest integer so that there is a root preserving graph isomorphism between $B_{G_1}(o_1,r)$ and $B_{G_2}(o_2,r)$. This metric endows what is known as the {\bf local topology} on $\cG_\bullet$ and with it $\cG_\bullet$ is a polish space and the usual notions of convergence in distribution and tightness are viable. In particular, the conclusion of \cref{mainresult} is equivalent to the statement that for any $r\geq 0$ the random rooted graphs $B_{G[\cT_n]}(o_n,r)$ converges in distribution to $B_{\T_k}(\rho,r)$, see \cite[Section 1.2]{curienRG}.


M{\'e}sz{\'a}ros \cite[Lemma  1.4]{meszaros2021local} computed a useful explicit expression for the distribution of $B_{\T_k}(\rho,r)$ for every fixed integer $r$. Let $(T,o)$ be a finite rooted tree. The {\bf height} of a vertex in $T$ is its graph distance from $o$ and the height of $T$ is the maximal height of a vertex in $T$. We say that $(T,o)$ is {\bf valid} if its height is even and every vertex at odd height has degree $k+1$. We call vertices of $(T,o)$ \emph{even} or \emph{odd} according to the parity of their height. Let $V(T)$ and $U(T)$ be the sets of even and odd vertices of $(T,o)$, respectively, and let $I(T)\subset V(T)$ be the even vertices of height \emph{strictly} smaller than the height of $T$. We also denote by Aut$(T,o)$ the set of root preserving  automorphisms of $(T,o)$. Lastly, for $K\subset V(T)$ we write $m(K,T)$ for the number of matchings in $T$ in which the vertices $K \cup U(T)$ are saturated. Lemma 1.4 of \cite{meszaros2021local} states that for any valid finite rooted tree $(T,o)$ of height $r$ we have
\be\label{eq:meszaros1.4}
\P(B_{\T_k}(\rho,r) \cong (T,o)) = \frac{e^{-k|I(T)|} (k!)^{|U(T)|} m(I(T),T)}{|\mathrm{Aut}(T,o)|} \, ,
\ee
where $\cong$ is the rooted graph isomorphism relation. In \cref{sec:local} we give an alternate proof for this equality based on the $1$-out model of Linial and the second author \cite{linial2019enumeration}. It is not used in the main results of this paper, but we believe it sheds more light on the distribution described in \eqref{eq:meszaros1.4}. 
Since $\T_k$ is always infinite and locally-finite, \eqref{eq:meszaros1.4} completely described the distribution of $(\T_k,\rho)$.

Due to \eqref{eq:meszaros1.4}, the following theorem is equivalent to (and is a more concrete version of) \cref{mainresult}.  For $v\in V$ and an integer $r\geq 0$, we abbreviate  $B_\cT(v,r)$ for $B_{G[\cT]}(v,r)$. 



\begin{theorem}\label{thm:mainTree} Assume the setup of \cref{mainresult}, let $(T,o)$ be a valid finite rooted tree of height $r$ and let $v_n$ denote a uniform drawn vertex of $V_n$. Then 
\[
    \lim_{n\to\infty}\P(B_{\cT_n}(v_n,r)\cong (T,o)) = \frac{e^{-k|I(T)|} (k!)^{|U(T)|} m(I(T),T)}{|\mathrm{Aut}(T,o)|} \, ,
\]
where $\cong$ is the rooted graph isomorphism relation.
\end{theorem}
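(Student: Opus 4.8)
The plan is to compute the probability $\P(B_{\cT_n}(v_n,r)\cong(T,o))$ directly from the determinantal formula \eqref{eq:defph}, and to show it converges to the stated expression. Fix a valid rooted tree $(T,o)$ of height $r$. The event $\{B_{\cT_n}(v_n,r)\cong(T,o)\}$ is the event that, after revealing the ball of radius $r$ around $v_n$ in the ambient graph $G_n$ and in the sample $\cT_n$, the resulting rooted graph is (rooted-graph) isomorphic to $(T,o)$. Since $G_n$ is $C_4$-free, balls of fixed radius in $G_n$ are close to trees; more precisely, with probability tending to $1$ (as $d\to\infty$, using $C_4$-freeness to rule out short cycles — actually one needs to rule out cycles of any fixed length, which requires a separate girth-type argument or an a priori estimate that the limiting object is a tree) the ball $B_{G_n}(v_n,r)$ is itself a tree, so we may condition on a fixed tree-shaped neighborhood in $G_n$ and ask which vertices of $U_n$ within that neighborhood lie in $\cT_n$.

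The core computation is a marginal probability for the determinantal process: for a finite set $A\subset U_n$ one has, by the standard theory of determinantal processes (Lyons \cite{Lyons03}), $\P^H(A\subseteq\cT_n)=\det(P_H\restriction A)$ and, more generally, inclusion–exclusion gives the probability that $\cT_n\cap S$ equals a prescribed subset for a finite $S\subset U_n$ in terms of minors of $P_H$ and of $I-P_H$. So the task reduces to estimating the relevant minors of $P_H=B^T(BB^T)^{+}B$ (equivalently of the projection) restricted to the vertices of $U_n$ lying in a fixed tree-shaped neighborhood. The key linear-algebraic input should be an asymptotic evaluation of these minors: the diagonal entries $(P_H)_{u,u}$ should be close to $r/m=\frac{d/(k+1)}{nd/(k+1)}\cdot$(something) — more usefully, $1-(P_H)_{u,u}\approx k/(k+1)$ reflecting that each $u\in U$ has degree $k+1$ and the "local" contribution, while off-diagonal entries $(P_H)_{u,u'}$ for $u,u'$ at distance $2$ (sharing a neighbor $v\in V$) are of order $1/d$ and entries between farther-apart vertices are of smaller order. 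Feeding these estimates into the minor expansion, the combinatorial factor that survives in the limit is exactly a sum over matchings, producing the $m(I(T),T)$ term, while the product of the $1-(P_H)_{u,u}\approx k/(k+1)$ factors and the normalization by the number of ways to embed $(T,o)$ in $G_n$ (which is where $|\mathrm{Aut}(T,o)|$ enters, via a double-counting / orbit-stabilizer argument) together with a Poisson limit for the offspring counts ($\bin(d-1,\tfrac{1}{k+1}\cdot\tfrac{k}{?})\to\Po(k)$-type convergence, giving the $e^{-k|I(T)|}$ and $(k!)^{|U(T)|}$ factors) assembles the claimed expression.

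So the proof has four ingredients in order: (i) reduce to tree-like neighborhoods in $G_n$ (handle short cycles), and set up the bijective correspondence between the isomorphism-type event and a finite family of "local configuration" events indexed by embeddings of $(T,o)$; (ii) express each local configuration probability via minors of $P_H$ and $I-P_H$ restricted to finitely many coordinates of $U_n$; (iii) establish the asymptotics of these minors — diagonal entries of $I-P_H$ near $k/(k+1)$, distance-$2$ entries of order $1/d$ with the right sign structure, higher-order entries negligible; (iv) perform the combinatorial bookkeeping: expand the minors, recognize the matching sum $m(I(T),T)$, pass to the Poisson limit for degrees, and divide by the automorphism count. Step (iii) is the heart of the matter and the main obstacle: controlling the entries and the minors of the orthogonal projection onto the row space of a large sparse signed adjacency matrix uniformly enough to take the limit. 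A natural route is to write $P_H=B^T(BB^T)^{+}B$ and analyze $BB^T$, which for a $(d,k+1)$-biregular $C_4$-free graph is $dI + (\text{signed adjacency of the "distance-}2\text{" graph on }V)$, so $(BB^T)^{-1}$ admits a Neumann-series / random-walk expansion in $1/d$; the $C_4$-freeness ensures the combinatorial terms in that expansion are governed by non-backtracking walks, and one must show the contributions organize into the matching count. This is where a resolvent expansion or a direct combinatorial expansion of $\det(P_H\restriction\cdot)$ along the lines of \cite{NP22} and \cite{meszaros2021local} will be needed, now carried out abstractly from the $C_4$-free biregular hypotheses rather than for a specific complex.

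Finally, to deduce \cref{mainresult} from \cref{thm:mainTree} one invokes \eqref{eq:meszaros1.4}: the two displayed right-hand sides coincide, $\T_k$ is a.s. infinite and locally finite so its radius-$r$ balls are valid finite trees, and convergence of the probabilities of all cylinder events $\{B\cong(T,o)\}$ over valid $(T,o)$ of height $r$ is, by \cite[Section 1.2]{curienRG}, exactly convergence in the local topology.
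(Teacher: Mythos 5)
Your proposal takes a genuinely different route from the paper, but it contains gaps that would be hard to repair as stated.

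First, a factual error in step (iii): the diagonal entries of $P_H$ are \emph{not} close to $1/(k+1)$. Since $\Tr(P_H)=\rank(B)\le n$ and $|U|=nd/(k+1)$, the average diagonal entry is at most $(k+1)/d\to 0$; indeed the paper's equation \eqref{eq:sigmaF} gives $(P_{H_I})_{u,u}\le (k+1)/((1-\sqrt\eps)d)$ for the dominated process, and the same order holds for $P_H$ itself. So $1-(P_H)_{u,u}=1-O(1/d)$, not $\approx k/(k+1)$. This ruins the asymptotic you expect to feed into the minor expansion. Relatedly, you conflate balls in $G_n$ with balls in $G_n[\cT_n]$: $G_n$ itself can have many short cycles (only $C_4$'s are excluded), and it is the sampled subgraph $G_n[\cT_n]$ whose $r$-balls are trees with high probability (this is item (2) of the paper's Theorem~\ref{thm:TisTight}).

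Second, and more fundamentally: the event $\{B_{\cT_n}(v_n,r)\cong(T,o)\}$ is not a finite cylinder event. For a fixed embedding $T'$, it requires $U(T')\subset\cT$ \emph{and} that the $\Theta(d)$ other $U$-neighbours of each $v\in I(T')$ avoid $\cT$. You cannot express this as a minor of $P_H$ and $I-P_H$ on a bounded set of coordinates and then take a pointwise limit: the exclusion set has unbounded size, inclusion–exclusion over it does not truncate, and the alternating signs are not controllable entrywise. This is exactly why the paper abandons the direct computation. It proves only an \emph{upper} bound for each tree: an inclusion bound via Cauchy–Binet and Minkowski's determinant inequality (Lemma~\ref{lem:inclusion}), and, crucially, an exclusion bound by peeling the $\Theta(d)$ excluded vertices one at a time, using \eqref{eq:conditioning} and a determinantal Nash–Williams-type lower bound $\P^W(u\in\cW)\ge\sum_{i,j}(M^{-1})_{i,j}$ (Lemma~\ref{lem:det_NW_Analogue}) to control each conditional probability (Lemma~\ref{lem:exactly}). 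The matching count $m(I(T),T)$ then emerges not from a minor expansion but from a telescoping product $\prod_i m(K_i,T)/m(K_{i+1},T)$. Finally, the paper never proves a matching lower bound at all: since the upper bounds sum to at most $1$ over all valid $(T,o)$, and the actual probabilities sum to $1$ by tightness, the upper bounds must be attained in the limit — this ``mass conservation'' step closes the argument and is entirely absent from your proposal. Without it, or without a direct lower bound you have not sketched, your plan does not yield the limit even if the entry estimates were corrected.

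Your intuition that some Neumann/resolvent expansion of $(BB^T)^{-1}$ and a matching interpretation via Cauchy–Binet underlies the answer is broadly right — these ideas do appear in the paper's inclusion bound \eqref{eq:inclusionMainTerm} — but the overall architecture (direct two-sided asymptotics of local configuration probabilities) would need to be replaced by the paper's one-sided-bound-plus-tightness scheme to actually work.
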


We also obtain a stronger quenched version of \cref{thm:mainTree}.
\begin{theorem}\label{thm:quenched} Assume the setup of \cref{mainresult}, let $(T,o)$ be a valid finite rooted tree of height $r$ and let the random variable $Y_n=Y_n(T,o)$ denote the number of vertices $v_n\in V_n$ for which
\[
B_{\cT_n}(v_n,r)\cong (T,o)\,.
\]
Then 
\[
    \frac{Y_n}{|V_n|} \to \frac{e^{-k|I(T)|} (k!)^{|U(T)|} m(I(T),T)}{|\mathrm{Aut}(T,o)|} \, ,
\]
in probability as $n\to\infty$.
\end{theorem}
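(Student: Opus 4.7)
By \cref{thm:mainTree}, $\E[Y_n]/|V_n| \to p$ where $p := \frac{e^{-k|I(T)|} (k!)^{|U(T)|} m(I(T),T)}{|\mathrm{Aut}(T,o)|}$. Chebyshev's inequality then reduces the claim to showing $\var(Y_n) = o(|V_n|^2)$. Writing $Y_n = \sum_{v\in V_n} X_v$ with $X_v = \one\{B_{\cT_n}(v,r)\cong (T,o)\}$, this is equivalent to proving
\[
\frac{1}{|V_n|^2}\sum_{u,v\in V_n} \P\bigl(X_u=1,\, X_v=1\bigr) \longrightarrow p^2,
\]
i.e., that for two independent uniform $u_n, v_n\in V_n$, the joint law of $(B_{\cT_n}(u_n, r), B_{\cT_n}(v_n, r))$ converges to the product of two independent copies of $(B_{\T_k}(\rho, r), \rho)$.

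The strategy is to re-run the proof of \cref{thm:mainTree} with two marked roots. That proof expresses $\P(X_v = 1)$ as a sum over rooted embeddings $\phi : (T, o) \hookrightarrow G$ with $\phi(o) = v$, weighted by a determinantal quantity extracted from \eqref{eq:defph}. Squaring, $\E[Y_n^2]$ decomposes as a sum over pairs of such embeddings $(\phi_1, \phi_2)$ rooted at $u$ and $v$, weighted by $\det(P_H \restriction S_1 \cup S_2)$ where $S_i := \phi_i(U(T))$.

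I would split the sum according to whether the images are vertex-disjoint. For pairs with $S_1 \cap S_2 \neq \emptyset$, the annealed expected ball size $\E|B_{\cT_n}(v, 2r)|$ converges to the finite quantity $\E|B_{\T_k}(\rho, 2r)|$ (via \cref{thm:mainTree} applied with a deeper tree, together with a uniform integrability bound), so the number of pairs $(u, v)$ admitting overlapping embeddings is $O(|V_n|)$, which is $o(|V_n|^2)$ after normalization. For disjoint pairs, the main input is that entries of $P_H$ between vertices $u, u' \in U$ at larger graph distance decay with $d$. Indeed, $P_H = B^T(BB^T)^+ B$ and $BB^T = dI + S$ where $S$ has entries in $\{0, \pm 1\}$ by $C_4$-freeness; the Neumann expansion $(BB^T)^{-1} = d^{-1}\sum_{j\geq 0} (-S/d)^j$ shows that $(P_H)_{u,u'}$ is of order $d^{-1}$ at graph distance $2$, $d^{-2}$ at graph distance $4$, and so on. This yields the asymptotic factorization $\det(P_H \restriction S_1 \cup S_2) = (1+o(1)) \det(P_H \restriction S_1) \det(P_H \restriction S_2)$, which combined with the annealed limit \cref{thm:mainTree} gives the product $p \cdot p$.

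The main obstacle I anticipate is quantitative control of the off-diagonal determinantal corrections when $\phi_1, \phi_2$ are vertex-disjoint but their images lie in a common region of $G$. In that regime the off-diagonal block $P_H \restriction (S_1, S_2)$ is non-trivial, and one must show that the error factor $\det(I - M)$ arising from Schur complementation (with $M$ built from those off-diagonal blocks) is $1+o(1)$ uniformly over admissible pairs of embeddings. Once this uniform factorization is established, summing over $(\phi_1, \phi_2)$ and comparing to the square of the single-root sum that proves \cref{thm:mainTree} gives $\E[Y_n^2]/|V_n|^2 \to p^2$, completing the Chebyshev argument.
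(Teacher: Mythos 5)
Your overall skeleton (Chebyshev, splitting pairs into overlapping versus disjoint, and killing the overlapping pairs via the expected size of $B_\cT(v,2r)$) matches the paper's proof of \cref{thm:quenched}. But for disjoint pairs your plan has two genuine gaps. First, you only analyze $\det(P_H \restriction S_1\cup S_2)$, which is the joint \emph{inclusion} probability $\P(U(T_1)\cup U(T_2)\subset \cT)$. The event $\{X_u=1, X_v=1\}$ also requires the joint \emph{exclusion} event, namely $N_G(w)\cap\cT\subset U(T_i)$ for every $w\in I(T_i)$, $i=1,2$, and this is where most of the work lies: the paper's \cref{lem:two_trees} re-runs the Nash--Williams-type lower bound of \cref{lem:using_trans_NW} for both trees simultaneously, checking that the test vectors $\psi_0,\dots,\psi_k$ still lie in the conditioned subspace $H\cap[U(T)\cup U(T')]^\perp$ (this uses vertex-disjointness of the two trees). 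Without a two-tree exclusion estimate your second moment bound does not close.

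Second, your route to factorizing the inclusion determinant is not justified. The Neumann expansion $(BB^T)^{-1}=d^{-1}\sum_j(-S/d)^j$ requires $\|S/d\|<1$, but the eigenvalues of $L^+=I+S/d$ can be as large as roughly $k+1$ (and $BB^T$ need not be invertible at all), so the series can diverge; the entire apparatus of \cref{clm:Neps1} and the structured set $U_{\eps,\delta}$ exists precisely because only \emph{most} eigenvalues concentrate near $1$. Moreover, an asymptotic factorization is more than is needed: since $\E[Y_n]/|V_n|\to p$ is already known, Chebyshev only requires the \emph{upper} bound $\E[Y_n^2]/|V_n|^2\le p^2+o(1)$, and the paper obtains the joint inclusion upper bound in one line from negative correlation \eqref{eq:negative}, $\P(U(T)\cup U(T')\subset\cT)\le\P(U(T)\subset\cT)\,\P(U(T')\subset\cT)$, combined with \cref{lem:inclusion}. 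I would replace your determinant-factorization step with negative correlation, and add the missing two-tree exclusion argument.
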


\subsection{Examples}\label{sec:examples}

We present examples of determinantal processes of interest that fall into the setting of \cref{mainresult}. 

\subsubsection{Uniform spanning trees} \label{example:ust}
It is well-known that the uniform spanning tree (UST) of a connected graph $\Gamma$ is the determinantal process associated with the rows-space of its oriented incidence matrix. To formulate it precisely in our setup, let $V=V(\Gamma)$, $U=E(\Gamma)$, choose an arbitrary orientation of the edges and set $B$ accordingly. That is,
\[
B_{v,e}=\begin{cases}
    1&\mbox{$v$ is the target of $e$,}\\
    -1&\mbox{$v$ is the source of $e$,}\\
    0&v\notin e.
\end{cases}
\]

Then, the rank of $B$ is $|V(\Gamma)|-1$  and $\P^H$ is the uniform measure on the edge-sets of spanning trees of $\Gamma$. 
In ~\cite{NP22}, it is shown that if $\Gamma$ is a $d$-regular graph, and $d\to\infty$, then the local limit of the UST of $\Gamma$ is the Poisson$(1)$ branching process conditioned to survive.  The special case $\Gamma=K_n$ was established by Grimmet~\cite{grimmett1980random}.

Note that this setting falls into our setup with $k=1$, hence \cref{mainresult} applies. In addition, for every $\cT\subset U=E(\Gamma)$, the graph $G[\cT]$ is obtained from the subgraph $(N_G(\cT),\cT)$ of $\Gamma$ by edge-subdivisions  --- replacing each edge with a path of length $2$. Indeed, the old vertices are $N_G(\cT)$ and the new vertices represent $\cT$. 
Similarly, observe that the limit object $\T_1$ is obtained by a similar edge-subdivisions of the infinite path with an unconditional Poisson$(1)$ branching process ``hanging'' on each vertex of the path. This is precisely the Poisson$(1)$ branching process conditioned to survive. Hence, \cref{mainresult} recovers the result of ~\cite{NP22}. In fact, some parts of our proof extend their methods, which rely of effective resistance in electrical networks, to a more general linear-algebraic setting.

\subsubsection{Kalai's hypertrees} \label{sec:hypertrees}
Kalai \cite{Kalai83} extended the well-known Cayley's formula for the number of labelled trees  to $k$-dimensional simplicial complexes with a complete $(k-1)$-dimensional skeleton. He showed that
\begin{equation}\label{eq:kalai}
\sum_{\cT}|H_{k-1}(\cT)|^2 = n^{\binom{n-2}k}\,,    
\end{equation}
where $H_{k-1}$ denote the $(k-1)$-dimensional homology group with integral coefficients, and the summation is over all $n$-vertex $k$-dimensional simplicial complexes $\cT$ satisfying:
\begin{enumerate}[(1)]
    \item $\cT$ has a complete $(k-1)$-dimensional skeleton, i.e., it contains all simplices of dimension less than $k$ spanned by its $n$ vertices,
    \item $H_{k-1}(\cT;\R)=0,$ and
    \item $H_{k}(\cT;\R)=0$.
\end{enumerate}
When $k=1$, these conditions force $\cT$ to be a spanning tree, and \eqref{eq:kalai} recovers Cayley’s tree enumeration formula. For general $k\ge 1$, complexes with properties (1)–(3) are called \emph{$k$-hypertrees}. It can be shown (see \cite{Kalai83}) that each $k$-hypertree contains precisely $\binom{n-1}{k}$ $k$-faces.

Equation~\eqref{eq:kalai} induces a natural probability measure on $k$-hypertrees of the complete $k$-dimensional simplicial complex with $n$ vertices, 
\begin{equation}\label{eq:hypertrees}
\P(\cT) = \frac{|H_{k-1}(\cT)|^2}{n^{\binom{n-2}{k}}}\, .
\end{equation}
This is a determinantal probability measure. Indeed, if $V=\binom{[n]}{k}$ and $U=\binom{[n]}{k+1}$ are the sets of all $(k-1)$-faces and all $k$-faces of the complete complex, respectively, and $B$ is the $V\times U$ matrix form of the $k$-dimensional boundary operator, then the measure $\P^H$ associated with the row-space $H$ of $B$ is supported on $k$-hypertrees and satisfies~\eqref{eq:hypertrees}. This follows directly from \cite{Kalai83}, see also \cite{lyons09}.

Many properties of Kalai's determinantal hypertree have been studied \cite{kahle2022topology,vander2024simplex,meszaros20242,mesz_rsa,meszaros2025homology,meszaros2025bounds,meszaros2025using}. In particular, M\'esz\'aros \cite{meszaros2021local} proved that the random bipartite incidence graph between the $(k-1)$-faces and the $k$-faces of $\cT$, rooted at a uniformly random $(k-1)$-face converges in distribution to  $\T_k$ with respect to the local topology. This result extends the aforementioned Grimmet's local weak limit of the uniform spanning tree of the complete graph $K_n$ to higher dimensions.

In our setup (see \cref{sec:setup}), this incidence bipartite graph is exactly $G[\cT]$ so M\'esz\'aros' result is another special case of \cref{mainresult}. In fact, \cref{mainresult} significantly extends \cite{meszaros2021local} as it applies to general regular high-degree polytopal complexes (and not just the complete simplicial complex), see \cref{cor:hyperforests} below.

\subsubsection{Hyperforests in high-degree  polytopal complexes}
Counterparts of Kalai's enumeration formula for more general simplicial and cell complexes $X$ have been studied extensively (see the survey  of Duval, Klivans and Martin~\cite{DKM} and the references therein). Every such enumeration formula gives rise to a corresponding determinantal measure on subcomplexes of $X$. As we explain below, our result is applicable to this measure provided that the underlying complex $X$ is polytopal, regular and of high-degree.

Let $\ell\geq 1$ be an integer and $X$ be an $\ell$-dimensional cell complex. A subcomplex $\cT \subset X$ is called an \textit{$\ell$-hyperforest} (referred to as a \textit{maximal spanning forest} in~\cite{DKM}) if
\begin{enumerate}[(1)]
  \item $\cT$ contains all the cells of $X$ in dimensions less than $\ell$,
    \item $H_{\ell-1}(\cT;\R)=H_{\ell-1}(X;\R)$, and
    \item $H_{\ell}(\cT;\R)=0$.
\end{enumerate}
In the case $\ell=1$ of graphs, theses conditions force $\cT$ to be a maximal spanning forest, that is, a spanning tree in each of the connected components of  the graph $X$. In addition, assuming condition (1), conditions (2)–(3) are equivalent to the condition that the columns of $\cT$ in the $\ell$-dimensional boundary operator $B$ of $X$ are a basis for the column-space of $B$~\cite{DKM}. Therefore, the determinantal measure associated with the row-space $H$ of $B$ is supported on (top dimensional faces of) $\ell$-hyperforests of $X$. In addition, Lyons showed in~\cite{lyons09} that $\P^H(\cT)$ is proportional to the squared-size of the torsion subgroup of $H_{\ell-1}(\cT)$. We refer to a complex drawn from this measure as the {\em determinantal $\ell$-hyperforest of $X$}. 
The next result follows immediately from \cref{mainresult}. A cell complex is called polytopal if each of its cells is a polytope and the intersection of any two cells is a face of each of them.

\begin{corollary}\label{cor:hyperforests}
Let $\ell, k \geq 1$ be fixed integers, $d(n)\to\infty$ an integer sequence, and $X_n$ a sequence of polytopal complexes of dimension $\ell$ such that
\begin{itemize}
    
    \item the boundary of every $\ell$-cell contains precisely $k+1$ cells of dimension $\ell -1$,
    \item every $(\ell-1)$-cell is contained in precisely $d$ cells of dimension $\ell$, and 
    \item \textit{(``$C_4$-free")} no pair of $(\ell-1)$-cells is contained in more than one $\ell$-cell.
\end{itemize}
Suppose that $\cT_n$ is the determinantal $\ell$-hyperforest of $X_n$, $o_n$ a uniformly drawn random $(\ell-1)$-cell of $\cT_n$, and let $G[\cT_n]$ be the bipartite incidence graph between $(\ell-1)$-cells and $\ell$-cells of $\cT_n$. Then $(G[\cT_n],o_n)$ converges in distribution to $(\T_k,\rho)$ with respect to the local topology.
\end{corollary}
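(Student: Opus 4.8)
The plan is simply to recognise the determinantal $\ell$-hyperforest of $X_n$ as an instance of the process \eqref{eq:defph} whose associated bipartite graph satisfies the hypotheses of \cref{mainresult}; the corollary is then immediate. So the work is entirely bookkeeping, unwinding the definitions of \cref{sec:examples}.

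First I would take $V_n$ to be the set of $(\ell-1)$-cells of $X_n$, $U_n$ the set of $\ell$-cells, and $G_n=(V_n,U_n,E_n)$ the bipartite incidence graph in which an $(\ell-1)$-cell $\sigma$ is joined to an $\ell$-cell $\tau$ exactly when $\sigma$ lies in the boundary of $\tau$. Because $X_n$ is a regular cell complex (as assumed for this family, cf.\ the remark preceding the corollary), the $\ell$-th boundary operator $\partial_\ell$, written as a $V_n\times U_n$ matrix $B_n$, has all incidence numbers in $\{0,\pm1\}$ and is therefore a signed adjacency matrix of $G_n$ in the sense of \cref{sec:setup}. The three bullet hypotheses of the corollary then read off directly: the first says each $u\in U_n$ has degree $k+1$, the second says each $v\in V_n$ has degree $d=d(n)\to\infty$, so $G_n$ is $(d,k+1)$-bi-regular; and the third (``$C_4$-free'') says that any two $(\ell-1)$-cells lie together in the boundary of at most one $\ell$-cell, i.e.\ any two vertices of $V_n$ share at most one neighbour. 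I would then observe that this also excludes two common neighbours for a pair of vertices in $U_n$: if distinct $\tau_1,\tau_2\in U_n$ had two $(\ell-1)$-cells $\sigma_1,\sigma_2$ in both boundaries, then $\sigma_1$ and $\sigma_2$ would together be contained in the boundaries of two $\ell$-cells, contradicting the previous sentence (here using $k+1\ge 2$). Hence $G_n$ is $C_4$-free.

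Next I would match up the measures and the rooted graphs. By the discussion before the corollary (using \cite{DKM}, with the determinantal interpretation of \cite{lyons09}), once condition (1) of an $\ell$-hyperforest holds, the $\ell$-hyperforests of $X_n$ are precisely the $\cT\subset U_n$ whose columns in $B_n$ form a basis of its column space, and ``the determinantal $\ell$-hyperforest of $X_n$'' is by definition the measure $\P^{H_n}$ of \eqref{eq:defph} with $H_n$ the row space of $B_n$. In particular, as noted in \cref{sec:setup}, every $\cT_n$ in the support is spanning, so $N_{G_n}(\cT_n)=V_n$; together with condition (1) this identifies the bipartite incidence graph between the $(\ell-1)$-cells and the $\ell$-cells of $\cT_n$ --- the object called $G[\cT_n]$ in the corollary --- with $G_n[\cT_n]$ in the notation of \cref{sec:setup}, and a uniform $(\ell-1)$-cell of $\cT_n$ with a uniform vertex $o_n$ of $V_n$. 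Applying \cref{mainresult} to $G_n,B_n,\cT_n,o_n$ then yields the claimed local convergence to $(\T_k,\rho)$.

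There is no substantial obstacle here. The only two points that require a moment's care are the verification of $C_4$-freeness on the $U_n$-side of the bipartition (done above) and the appeal to regularity of $X_n$ to guarantee that $\partial_\ell$ has entries in $\{0,\pm1\}$ and hence genuinely is a signed adjacency matrix --- without this last hypothesis the boundary operator could have larger incidence numbers and would fall outside the scope of \cref{mainresult}.
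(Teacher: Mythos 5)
Your proof is correct and is exactly the bookkeeping the paper leaves implicit when it says the corollary ``follows immediately from \cref{mainresult}.'' Your explicit check that $C_4$-freeness on the $(\ell-1)$-cell side forces it on the $\ell$-cell side (so the bipartite graph is genuinely $C_4$-free as defined in \cref{sec:setup}), and your appeal to regularity of $X_n$ to get incidence numbers in $\{0,\pm1\}$, are the two small verifications the paper glosses over, and you have handled both correctly.
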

We note that if $X$ is simplicial then $\ell=k$ and ``$C_4$-freeness" is trivial.  We give here two concrete examples --- apart from the complete simplicial complex --- where Kalai-type enumeration had been studied and for which  \cref{cor:hyperforests} applies.
\begin{enumerate}
    \item Take $X_n$ to be the complete balanced $r$-partite (colorful) $\ell$-dimensional simplicial complex $K_{n/r,\ldots,n/r}$, where $r \ge \ell+1$.
Since $X$ is simplicial, we have $k=\ell$ and $d = n(1-k/r)\to\infty$. Adin established a Kalai-type enumeration formula for colorful simplicial complexes in ~\cite{adin1992counting}.

\item Consider the cubical complex $X_n$, given by the $\ell$-skeleton of the $n$-dimensional hypercube $Q_n = [0,1]^n$. In this case, $d=n-\ell+1\to\infty$ and $k=2\ell-1$. A Kalai-type enumeration in this setting is given in \cite{duval2011cellular}.
\end{enumerate}

\newcommand{\rr}{\ell}

\subsubsection{Discrete Grassmanian}

Let $\F_q$ be a finite field of order $q$ and denote by $\mathrm{Gr}_{\rr}(n)$ the set of linear subspaces of dimension $\rr$ in $\F_q^n$. {We fix $q$ and $\rr$} and let $V=\mathrm{Gr}_{\rr}(n)$ and $U=\mathrm{Gr}_{{\rr}+1}(n)$. We consider a bipartite graph with vertex set $V\uplus U$ by putting an edge $(v,u)$ with $v\in V$ and $u\in U$ if and only if $v \subset u$, and set $B_{v,u}=1$, and $B_{v,u}=0$ otherwise. If ${\rr}+1<n/2$, then $B$ has full row rank, see \cite{stanley2013sperner}. Homology groups over finite fields that correspond to these matrices were studied in  \cite{mnukhin2000modular,tessler2023topological}. 
Denote the size of $\mathrm{Gr}_{\rr}(n)$ by $$\binom nq_{\rr}:={ (q^n-1)(q^n-q)(q^n-q^2)\cdots(q^n-q^{{\rr}-1}) \over (q^{\rr}-1)(q^{\rr}-q)(q^{\rr}-q^2)\cdots(q^{\rr}-q^{{\rr}-1}) }\,.$$
We have that $|V|=\binom nq_{\rr},~|U|=\binom nq_{{\rr}+1}$ and $k+1=\binom {{\rr}+1}{\rr}_{\rr}$. In addition
\[
 d(n) = {q^{n-{\rr}}-1 \over q-1} \to \infty\, ,\]
as $n\to \infty$. Indeed, for any fixed $v\in\mathrm{Gr}_{\rr}(n)$, each $u\in\mathrm{Gr}_{\rr+1}(n)$ containing $v$ corresponds to a one dimensional subspace of the quotient space $\F_q^n / v$ which has dimension $n-\rr$.
Note that $G=(V,U,E)$ is $C_4$-free since there is at most one $({\rr}+1)$-dimensional subspace containing two distinct ${\rr}$-dimensional subspaces. 

 We are not aware of a description of the bases $\cT$ of the column-space of $B$, nor of previous work studying the determinant of $B\res\cT$. Nevertheless, \cref{mainresult} obtains the distributional limit of $G[\cT]$ that is sampled proportionally to this determinant with respect to the local topology.




\subsubsection{Incidence matroids}
Let $V=\binom{[n]}{l}$ and $U=\binom{[n]}{r}$ for some fixed $l<r\leq n$, and $B$ be the unsigned incidence matrix (all $+1$). It is known \cite{stanley2013sperner} that $B$ has a full row rank, but we are not aware of a characterization of the bases of the corresponding matroid or their determinants in general.
In the special case $l=1,r=2$, the bases $\cT$ are $n$-edge graphs with no even cycles and only unicyclic or acyclic components, and $\P^H(\cT)$ is proportional to $4^C$, where $C$ is the number of cyclic components in $\cT$~\cite{zaslavsky1982signed}. Our result shows that the local weak limit of such a random graph is the Poisson$(1)$ branching process conditioned to survive.

\subsection{The distribution of $(\T_k,\rho)$}\label{sec:local}
In this section we give an alternate proof that the distribution of $(\T_k,\rho)$ is given by \eqref{eq:meszaros1.4}; the original proof is due to M\'esz\'aros, see \cite[Lemma 1.4]{meszaros2021local}. 
Our argument is related to the motivation for the original definition by Linial and the second author of $\T_k$ in \cite{linial2019enumeration} as the local limit of a random $1$-out $k$-dimensional simplicial complex.

Let $k$ and $d$ be integers, and consider the following infinite tree $\bT=\bT_{k,d}$ rooted at a vertex $\bv$. As before, the vertices of $\bT$ are split into $V(\bT)$ and $U(\bT)$ --- the vertices of even and odd height respectively. In addition, every vertex from $V(\bT)$ has $d$ children, and every vertex from $U(\bT)$ has $k$ children. For a vertex $v\in V(\bT)$, we denote by $N_\bT(v)$ the neighbor set of $v$. We stress that $N_\bT(v)$ include the $d$ children of $v$ and also its parent if $v$ has one. Let each vertex $v\in V(\bT)$ independently {\it draw} a uniform random vertex $u_v\in N_\bT(v)$. Additionally, let $\cO = \{u_v\mid v\in V(\bT)\}$ and $\bT[\cO]$ be the random subgraph of $\bT$ induced by $V(\bT)\cup\cO$, and abbreviate $B_\cO(\bv,r)$ for $B_{\bT[\cO]}(\bv,r)$ for an integer $r\ge 0$.

\begin{lemma}\label{lem:limit_expression}
Fix an integer $k$ and let $(T,o)$ be a  valid rooted tree of height $r\ge 0$. Then,
\[
\P(B_{\T_k}(\rho,r)\cong (T,o)) = 
\lim_{d\to\infty}\P(B_{\cO}(\bv,r)\cong (T,o)) =
\frac{e^{-k|I(T)|} (k!)^{|U(T)|} m(I(T),T)}{|\mathrm{Aut}(T,o)|} \, .
\]
\end{lemma}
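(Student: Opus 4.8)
The plan is to prove the middle equality — that the $1$-out model $\bT[\cO]$ on the infinite tree $\bT_{k,d}$ has the $r$-ball at the root converging in distribution to $B_{\T_k}(\rho,r)$ — and then to identify the limit explicitly, showing it equals the right-hand side; the first equality of the lemma then follows either by matching the two explicit formulas or, more conceptually, by exhibiting a coupling. I will start with the distributional statement. Because $\bT_{k,d}$ is a tree, the ball $B_\cO(\bv,r)$ is obtained by a local exploration: for each even vertex $v$ reached, we observe which of its $\deg(v)$ neighbors in $\bT$ it drew, and for each odd vertex $u$ reached we observe how many and which of the even vertices incident to $u$ in $\bT$ drew $u$. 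The key point is that, conditioned on the exploration up to some stage, the remaining draws are independent; so $B_\cO(\bv,r)$ is built by a branching-type recursion whose offspring law I can compute exactly and then pass to the $d\to\infty$ limit. Specifically, the root $\bv\in V(\bT)$ has $d$ children plus is itself the root, so it draws one of its $d$ children uniformly (it has no parent): this gives exactly one ``$\fa$-type'' child among its children. Each of the other $d-1$ children $v'$ of $\bv$ independently draws $\bv$ with probability $1/(k+1)$ (since $v'$ has $d$ children and one parent $\bv$, for a total degree $d$... wait, $v'\in U(\bT)$ has $k$ children and parent $\bv$; but it is odd, so it does not draw) — more carefully, only even vertices draw, and an odd vertex $u$ is selected by each of its $k$ children (even vertices) independently with probability $1/d$ and by its parent (an even vertex) with probability $1/d$. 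Thus as $d\to\infty$, for an odd vertex $u$ other than those forced into the tree, the number of its even neighbors that draw it is $\bin(k \text{ or } k{+}1, 1/d)\to \Po(0)$, while for odd vertices forced in (because some explored even vertex drew them), conditioning on at least one such draw and letting $d\to\infty$ produces a $\Po(k)$ number of additional even children by the usual Poisson thinning/size-biasing computation. This recovers exactly the four-type branching rules defining $\T_k$: the ``$\fa$'' type tracks the unique forced edge propagating along the draw of the currently-explored even vertex, and the ``$\fb$'' type tracks vertices that were drawn by someone below them. I would organize this as: (i) set up the exploration and the conditional independence; (ii) compute the exact one-step transition probabilities for finite $d$; (iii) take $d\to\infty$ termwise and match to the $\T_k$ offspring distribution; (iv) conclude convergence of $B_\cO(\bv,r)$ to $B_{\T_k}(\rho,r)$ for each fixed $r$, hence the first two quantities in the lemma agree.

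For the explicit formula, I would compute $\P(B_\cO(\bv,r)\cong(T,o))$ directly for finite $d$ and then take the limit. Given a valid rooted tree $(T,o)$ of height $r$, the event $B_\cO(\bv,r)\cong(T,o)$ decomposes as: some root-preserving embedding of $T$ into $B_\bT(\bv,r)$ is realized exactly by the draws, and none of the other potential neighbors at levels $\le r$ are drawn. The embedding is determined by a system of draws: each even vertex of $T$ at height $<r$ must draw its specified neighbor in $T$; each odd vertex $u$ of $T$ at height $\le r-1$... actually the cleaner bookkeeping is via the induced orientation — each edge of $T$ between an even vertex $x$ and an odd vertex $y$ is ``caused'' by exactly one of $x,y$ drawing the other, and since only even vertices draw, every edge is caused by its even endpoint (either the parent, if even, drawing down to the odd child, or the odd parent's even child drawing up). An even vertex $v$ of $T$ has one ``out-draw''; it is used to realize the edge from $v$ to the unique $T$-neighbor that $v$ drew. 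The set of even vertices whose draw points toward the root (up the tree) versus away from it is precisely encoded by a matching: the even vertices that draw ``down'' saturate their chosen odd child, and together with the requirement that every odd vertex of $T$ at height $<r$ gets drawn by exactly one of its even $T$-neighbors, this is exactly a matching in $T$ saturating $U(T)$ and some subset of the even vertices — and the even vertices at height exactly $r$ (the leaves, in $V(T)\setminus I(T)$) are automatically unsaturated since they have no odd $T$-neighbor below. So the number of valid draw-patterns realizing $(T,o)$ inside a fixed embedding is $m(I(T),T)$ (matchings saturating $I(T)\cup U(T)$), the number of embeddings up to automorphism contributes the $1/|\mathrm{Aut}(T,o)|$ together with falling-factorial vertex choices $\sim d^{|V(T)\setminus\{o\}|}$-ish, and the ``no other neighbors drawn'' constraint contributes $(1-1/d)^{(\text{number of competing draws})}\to e^{-k|I(T)|}$, because each even vertex $v$ of $I(T)$ has $d - (\text{used})$ remaining children none of which may be drawn and none of whose draws hit $v$, yielding a total of $k|I(T)|$ surviving exponential factors after cancellation. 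The $(k!)^{|U(T)|}$ arises from the choices of which of the $k+1$ slots at each odd vertex are matched to which even $T$-neighbors — this is the combinatorial factor converting ordered draw-data to the isomorphism class. Carrying out this counting carefully and showing all lower-order terms vanish gives the claimed limit.

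The main obstacle I expect is the bookkeeping in the second computation: correctly accounting for the powers of $d$ (from choosing vertices of $\bT$ to serve as the images of $T$'s vertices) against the probabilities of the required draws ($1/d$ or $1/(k+1)$ per draw) and the survival probabilities of all competing draws, and verifying that these combine to leave exactly $e^{-k|I(T)|}(k!)^{|U(T)|}m(I(T),T)/|\mathrm{Aut}(T,o)|$ with all error terms $O(1/d)$. In particular one must be careful that competing draws come only from even vertices (odd vertices never draw), that an even vertex at height $r$ contributes no survival factor (it makes no draw that could conflict — or rather its draw goes outside the ball and is unconstrained), and that the $d^{(\cdot)}$ counting factor matches the product of $1/d$'s exactly up to the automorphism normalization; the $C_4$-freeness of $\bT$ (it is a tree) makes these independent, which is what makes the computation tractable. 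A secondary, more structural obstacle is making the identification with the four-type branching process fully rigorous rather than heuristic — i.e., setting up an explicit coupling or a clean inductive argument on $r$ showing the offspring laws match level by level — but since the lemma's statement only requires agreement of the distributions of $B_\bullet(\cdot,r)$ for each $r$, the termwise-limit approach via the explicit formula sidesteps needing the coupling at all: once I show $\lim_{d\to\infty}\P(B_\cO(\bv,r)\cong(T,o))$ equals the stated expression, and the paper already records (via M\'esz\'aros's Lemma 1.4, or I reprove it here) that the same expression is $\P(B_{\T_k}(\rho,r)\cong(T,o))$, both equalities of the lemma follow simultaneously.
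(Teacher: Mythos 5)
Your two-part plan (branching-process identification of $\T_k$ as the $d\to\infty$ limit of $\bT[\cO]$, then a direct computation of $\lim_d\P(B_\cO(\bv,r)\cong(T,o))$ by counting embeddings, draw patterns, and survival probabilities) is essentially the paper's proof, and the overall decomposition into an embedding factor $\sim(d\cdot k!)^{|U(T)|}/|\mathrm{Aut}(T,o)|$, a matching count $m(I(T),T)$, and an exponential survival factor $e^{-k|I(T)|}$ is exactly how the paper organizes the calculation.

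There is, however, one genuine error in your bookkeeping that would give the wrong matching count if carried through. You assert that the even leaves of $T$ (the vertices at height exactly $r$) ``are automatically unsaturated since they have no odd $T$-neighbor below.'' That does not follow: a leaf can perfectly well be saturated by the matching edge to its parent. For a concrete counterexample, take $T$ to be a root $o$ with two odd children $u_1,u_2$, each with $k$ even leaf children. Then $I(T)=\{o\}$, $U(T)=\{u_1,u_2\}$, and every matching saturating $I(T)\cup U(T)$ pairs $o$ with one $u_i$ and pairs the other $u_j$ with one of its $k$ leaf children; so $m(I(T),T)=2k$ and \emph{every} contributing matching saturates a leaf. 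If you instead restricted attention to matchings in which no leaf is saturated, you would get zero. The correct statement is that $I(T)$ and $U(T)$ \emph{must} be saturated (because every $v\in I(T)$ draws into $U(T)$ and every odd vertex must be drawn by some even $T$-neighbor), while leaves \emph{may or may not} be saturated; the events are then indexed by all matchings saturating at least $I(T)\cup U(T)$, which is precisely what $m(I(T),T)$ counts. The paper handles this by explicitly writing, for each such matching $M$, an event $E_M$ that also specifies that $M$-unsaturated even vertices draw outside $U(T)$, and verifying that the $E_M$ partition $E\setminus E'$. Correcting this point makes your argument sound; the rest of your outline (including the $e^{-k|I(T)|}$ from competing draws at the $I(T)$ vertices and the $(k!)^{|U(T)|}$ from embedding the $k$ children of each odd vertex) matches the paper's computation.
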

\begin{proof}
The first equality follows by viewing the component of the root $\bv$ in $\bT[\cO]$ as a multi type branching process. One readily verifies that, as $d \to \infty$, the progeny number and type distribution converges weakly to those of $\T_k$. Hence, $(\bT[\cO],\bv)$ converges in distribution with respect to the local topology to $(\T_k,\rho)$.

The types in the process are $\fa$-even, $\fb$-even, and $j$-odd for $0\le j\le k$, where the root $\bv$ is of type $\fa$-even. Intuitively, $v\in V(\bT)$ is of type $\fa$-even if $u_v$ is his child and is of type $\fb$-even if $u_v$ is his parent.
Note that we have more odd types here than in $\T_k$ since we need to account for the possibility that a vertex in $U(\bT)$ is drawn by more than one of its neighbors. Namely, a vertex $u \in U(\bT)$ is of type $j$-odd if it was drawn by $j$ of his children. In particular, type $0$-odd corresponds to $\fa$-odd in $\T_k$ and type $1$-odd to $\fb$-odd.

To sample the branching process we use the following source of randomness. For each even particle $v$ we encounter, we sample independent random variables $X_1,...,X_d$ that are binomially distributed with $k$ trials and success probability $1/(d+1)$. One should view $X_i$ as the number of children of $u_i$ --- the $i$-th child of $v$ --- that drew $u_i$. Here is the precise description of the process: 

\begin{itemize}
    \item A particle of type $j$-odd, for $0\le j\le k$, has $j$ children of type $\fb$-even and $k-j$ children of type $\fa$-even. 
    \item A particle of type $\fa$-even has one child of type $X_1$-odd, and $n_j$ children of type $j$-odd, where
    \(
    n_j= |\{2 \le i\le d\mid X_i=j\}|,
    \)
    for $1\le j\le k$. Intuitively, the first child, of type $X_1$-odd, is the one drawn by the $\fa$-even particle.
    \item A particle of type $\fb$-even has $m_j$ children of type $j$-odd, where
    \(
    m_j= |\{1 \le i\le d\mid X_i=j\}|,
    \)
    for $1\le j\le k$.
\end{itemize}
It is clear that this branching process and the component of $\bv$ in $\bT[\cO]$ have the same distribution.  In addition, the convergence to the law of $\T_k$ is also straightforward. Indeed, $X_1,n_2,...,n_k,m_2,...,m_k$ all converge to $0$ in distribution as $d\to\infty$, and $n_1,m_1$ converge to Poisson$(k)$.

To prove the second equality, denote by $c(v)$ the number of children of $v\in V(T)$ in the rooted tree $T$. Then, the number of rooted-graph embeddings of $(T,o)$ into $(\bT,\bv)$ is
\[
\left(\prod_{v \in I(T)} d(d-1)\cdots(d-c(v)+1)\right)\cdot (k!)^{|U(T)|} = (1+O(d^{-1}))\cdot (d\cdot k!)^{|U(T)|}
\,,
\]
where the last equality uses $\sum_{v\in I(T)}c(v)=|U(T)|$. Therefore, there are 
 $$(1+O(d^{-1}))\cdot \frac{(d\cdot k!)^{|U(T)|}}{|\mathrm{Aut}(T,o)|}$$ different rooted subgraphs of $(\bT,\bv)$ that are isomorphic to $(T,o)$. 
 
 Fix such a rooted subgraph $T'$. The event $E$ that $B_{\cO}(\bv,r)=T'$ occurs if and only if (i) $U(T')\subset \cO$ and (ii)  $N_\bT(v)\cap \cO \subset U(T')$ for every $v\in I(T')$. Note that condition (i) is equivalent to $U(T')\subset \{u_v:v\in V(T')\}$ since $T$ is valid.

 Denote by $E'$ the event that there exists a vertex $u\in U(T')$ that is chosen by at least two vertices in $V(T')$. If $E\cap E'$ occurs then there are at least $|U(T)|+1$ vertices in $V(T')$ that chose a vertex in $U(T')$. Therefore,
 \[
 \P(E\cap E') \le \left(|V(T')|\cdot |U(T')|\cdot\frac1d\right)^{|U(T')|+1} =
 O\left(d^{-(|U(T)|+1)}\right)\,.
 \]

In addition, for every matching $M$ in $T'$ in which the vertices $I(T')\cup U(T')$ are saturated, consider the event $E_M\subset E\setminus E'$ in which 
\begin{itemize}
    \item every vertex $v\in V(T')$ that is saturated in $M$ drew its pair in $M$ as $u_v$, 
    \item $N_G(v)\cap \cO \subset U(T')$ for every $v\in I(T')$, and
    \item $u_v\notin U(T')$ for every vertex $v\in V(T')$ that is unsaturated by $M$.
\end{itemize}
 We have that
\begin{align*}
    \P(E_M) =& \frac 1d\cdot \left(\frac 1{d+1}\right)^{|U(T)|-1}\cdot \prod_{v\in I(T')}\left(\frac d{d+1} \right)^{k(d-c(v))} \cdot \left(\frac{d}{d+1}\right)^{|V(T)|-|U(T)|} 
    \\=& (1+O(d^{-1}))\cdot d^{-|U(T)|}\cdot e^{-k|I(T)|}\,.
\end{align*}

The first inequality is obtained as follows. The first term accounts for the choice of the root (which has only $d$ neighbors), the second term corresponds to the choices of the other vertices in $V(T')$ that are saturated in $M$, the third term reflects the requirement that $N_G(v)\cap \cO \subset U(T')$ for every $v\in I(T')$,  and the last term reflects the requirement that the unsaturated vertices in $V(T')$ (which must be leafs) do not choose their parent. 

 We claim that $E\setminus E'$ is a disjoint union of the $m(I(T),T)$ events of the form $E_M$. Indeed, suppose that $E\setminus E'$ occurs, and consider a matching in $T'$ that pairs $(v,u)$ if $u=u_v$. The assumption that $E'$ does not occur guarantees that this is a well-defined matching, and the occurrence of $E$ implies that $I(T')\cup U(T')$ are saturated. Therefore,
 \[
 \P(E) = (m(I(T),T)+O(d^{-1}))\cdot d^{-|U(T)|}\cdot e^{-k|I(T)|}\,.
 \]

In conclusion, by summing over the different rooted subgraphs $T'$ of $(\bT,\bv)$ that are isomorphic to $(T,o)$, we find that 
\[
\P(B_{\cO}(\bv,r)\cong (T,o)) =
 (m(I(T),T)+O(d^{-1}))\cdot\frac{(d\cdot k!)^{|U(T)|}}{|\mathrm{Aut}(T,o)|}\cdot d^{-|U(T)|}\cdot e^{-k|I(T)|}\,,
\]
and the proof is derived directly.
\end{proof}

\subsection{Some basic facts about determinantal probability measures} \label{dec:deter}

We describe here some useful facts about the determinantal probability measure $\P^H$ defined in \eqref{eq:defph}, all can be found in Lyons' seminal paper \cite{Lyons03}.  Even though $\P^H$ description in \eqref{eq:defph} is complete, it is sometimes more convenient and common to describe its finite marginals: for any subset $E \subset U$ we have that 
\be\label{eq:PHmarginal} 
\P^H(E \subset \cT) = \det(P_H \res E) \, .
\ee
We will also frequently use the \emph{negative correlation} property of $\P^H$, that is, 
\be\label{eq:negative}
\P^H(U_1\uplus U_2\subset \cT) \leq \P^H(U_1 \subset \cT)\P^H(U_2 \subset \cT) \, ,
\ee
for every disjoint subsets $U_1,U_2\subset U(G)$,
see \cite[Theorem 6.5]{Lyons03} or \cite[Theorem 7.8.9]{MatrixBook}. Next, if $H' \subset H$ is a linear subspace of $H$, then 
\be \label{eq:dominate}
\P^{H'}  \preccurlyeq \P^H \, ,
\ee
where by $\P^{H'}  \preccurlyeq \P^H$ we mean that $\P^{H}$ stochastically dominates $\P^{H'}$. Lastly, conditioning on the event that some elements belong to $\cT$ and others do not, we get another determinantal probability measure corresponding to a certain linear subspace. In particular, for two disjoint subsets $A,B$ of $U$, it is shown in \cite[(6.5)]{Lyons03} that
\be \label{eq:conditioning} 
\P^H(\cdot \mid A \subset \cT, B\cap \cT = \emptyset) = \P^{((H\cap [A]^\perp)+[A]+[B])\cap [B]^\perp}(\cdot) \, ,
\ee
where by $[A]$ we mean the linear subspace of $\R^m$ having non-zero values only on the coordinates of $A$. 
In words, the subspace corresponding to conditioning that 
$B\cap\cT=\emptyset$ is the projection of $H$ onto the orthogonal complement of $B$.
On the other hand, conditioning that  $A\subset \cT$ is 
obtained by taking the direct sum of $[A]$ with the intersection of $H$ and the orthogonal complement of $[A]$.

\section{Proofs}

We begin with some  definitions and notations central to the proof. Recall that $G=(V,U,E)$ is a bipartite graph that is $(d,k+1)$-bi-regular, that $d\to \infty$ and that $B$ is a $|V|\times |U|$ signed adjacency matrix of $G$. We denote $n=|V|$ and $m=|U|$.

Let $L^+:=\frac1d B B^T \in \R^{n\times n}$ and $L^{-}=\frac1d B^T B \in \R^{m\times m}$ be the two natural ``Laplacian'' operators. Both operators are positive semi-definite and have the same set of positive eigenvalues denoted by  $\lambda_1,...,\lambda_r$ where $r \leq \min(n,m)$ is the rank of $B$. We  write $\psi_1,...,\psi_r \in \R^m$ for a choice of orthogonal unit eigenvectors of $L^-$ corresponding to $\lambda_1,\ldots,\lambda_r$. We fix this choice throughout the rest of the proof. The vectors $\{\psi_1^T,\ldots,\psi_r^T\}$ form an orthonormal basis of the row space $H=\ima(B^T)$ of $B$, hence the orthogonal projection $P_H$ onto the $H$ is
$$ P_H = \sum_{i=1}^r \psi_i \psi_i^T \, .$$



Next we set parameters $\epsilon=\epsilon(n)>0$ and $\delta=\delta(n)>0$ that satisfy
\begin{equation}
    \label{eq:eps_del}
    \epsilon\to 0,\;
    \epsilon d\to \infty,\;
    \delta d\to 0,\;
    \mbox{and }\epsilon\delta d^2\to\infty \,,
\end{equation}
as $n\to \infty$ (for instance, $\eps=d^{-1/2}$ and $\delta = d^{-5/4}$ will do). We will use these parameters throughout the proof. The following definition will be crucial.

\begin{definition} \label{def:struct}
We say that a vertex $u\in U$ is {\bf $(\epsilon,\delta)$-structured}, if
\begin{equation}
    \label{eq:typcial}
    \sum_{i:(\lambda_i-1)^2>\epsilon} \psi_i(u)^2 > \delta \, .
\end{equation}
We denote by $U_{\epsilon,\delta}\subset U$ the set of $(\epsilon,\delta)$-structured vertices.
\end{definition}
Informally, $u$ is structured if there is a non-negligible $\ell_2$ mass on it from the eigenvectors corresponding to eigenvalues far from $1$. Intuitively this captures elements of the determinantal process that have ``structure'' around them. For example, in the UST example (see \cref{example:ust}), edges that disconnect the graph into at least two linear sized components can be shown to be structured. We will show that their number is small, namely $o(m)$ (see \cref{clm:Nepsdel1}).

\subsection{Tightness and local lack of structure}

Recall the definition of $B_\cT(v,r)$ appearing above \cref{thm:mainTree}. The main result in this section is the following.

\begin{theorem}\label{thm:TisTight}
Let $o$ be a uniformly drawn vertex of $V$ that is independent from $\cT$. Then for any integer $r>0$ we have 
\begin{enumerate}
\item $\lim_{t\to\infty} \lim_{n\to\infty} \P(|B_\cT(o,r)|>t) = 0$, \rm{and},
\item $\lim_{n\to\infty} \P( B_\cT(o,r) \mathrm{\ is\ a\ tree})=1$, \rm{and}, 
\item $\lim_{n\to\infty}\P(B_\cT(o,r)\cap U_{\epsilon,\delta}\neq\emptyset)=0.$
\end{enumerate}
\end{theorem}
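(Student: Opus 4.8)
The plan is to prove all three statements together by first controlling, via the first moment method, the expected number of vertices in $B_\cT(o,r)$ and the expected number of ``defects'' (cycles or structured vertices). The key input is the marginal formula \eqref{eq:PHmarginal}: for $E\subseteq U$, $\P^H(E\subseteq\cT)=\det(P_H\restriction E)$, and for a single $u$, $\P^H(u\in\cT)=(P_H)_{u,u}=\sum_i\psi_i(u)^2$. Since $\Tr(P_H)=r$ and $P_H$ has entries in $[0,1]$ on the diagonal, this already gives $\sum_u\P^H(u\in\cT)=r\le m$. The first thing to establish is a \emph{uniform} bound $\P^H(u\in\cT)=O(1/d)$: indeed $(P_H)_{u,u}=\sum_i\psi_i(u)^2$, and because $L^-=\frac1d B^TB$ has diagonal entries $\frac1d(B^TB)_{u,u}=\frac{k+1}{d}$, one gets $\sum_i\lambda_i\psi_i(u)^2\le (k+1)/d$; combined with a lower bound on the relevant $\lambda_i$ (the nonzero spectrum of $L^-$ is bounded below, e.g.\ one shows the eigenvalues concentrate near $1$ for $C_4$-free bi-regular graphs, or at worst $\lambda_i\ge c$ for the eigenvectors carrying most of the mass) this yields $(P_H)_{u,u}=O(1/d)$. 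This is the quantitative heart of the argument.

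Next I would build $B_\cT(o,r)$ by exploring outward from $o$: each even vertex $v\in V$ has exactly $d$ neighbors in $U$, and by the uniform marginal bound the expected number of those neighbors lying in $\cT$ is $d\cdot O(1/d)=O(1)$; each odd vertex $u\in\cT$ has exactly $k+1$ neighbors in $V$, all of which are automatically included. Thus the exploration is dominated (in expectation) by a branching process with offspring mean $O(1)$ at even levels and deterministic $(k+1)$ at odd levels, so $\E|B_\cT(o,r)|=O_r(1)$ and Markov's inequality gives statement~(1). For statement~(2), a cycle in $B_\cT(o,r)$ of length $\le 2r$ forces the existence of a short set of edges/vertices in $\cT$ creating a short cycle through $o$; I would bound the expected number of such configurations. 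The relevant count is: choose a (short, $C_4$-free so length $\ge 6$) cycle alternating between $V$ and $U$ passing near $o$; since $G$ is $C_4$-free, two distinct $u$'s share at most one $V$-neighbor, so the number of potential cycles of a given combinatorial type through a typical $v$ is $O(d^{j-1})$ for a cycle using $j$ vertices of $U$, while the probability all $j$ of those $U$-vertices lie in $\cT$ is, by negative correlation \eqref{eq:negative}, at most $(O(1/d))^{j}$; summing over $j\ge 3$ (cycles need at least $3$ $U$-vertices since $C_4$-free) and over the $o$ average gives $O(1/d)\to 0$. Hence w.h.p.\ $B_\cT(o,r)$ is a tree.

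For statement~(3), the expected number of structured vertices in $B_\cT(o,r)$ is at most $\sum_{v\in V}\frac1n\cdot\P(\exists u\in U_{\epsilon,\delta}\cap B_\cT(v,r))$; bounding the exploration as above, this is $O_r(1)\cdot \max_v \P(\text{a fixed nearby } u\in\cT\cap U_{\epsilon,\delta})$, which in turn is at most $|U_{\epsilon,\delta}|/m$ times a bounded factor after averaging over $o$ (using that a uniformly random $o$ sees a uniformly random-ish neighborhood), i.e.\ it suffices to show $|U_{\epsilon,\delta}|=o(m)$. That last fact is exactly \cref{clm:Nepsdel1} (referenced in the text and presumably proved in the next subsection): $\sum_{u}\sum_{i:(\lambda_i-1)^2>\epsilon}\psi_i(u)^2=\sum_{i:(\lambda_i-1)^2>\epsilon}1=|\{i:(\lambda_i-1)^2>\epsilon\}|=o(\delta m)$ because the spectrum of $L^-$ concentrates at $1$ (a consequence of $C_4$-freeness, which makes $L^--I$ have small Hilbert--Schmidt norm, $\Tr((L^--I)^2)=o(n)$ since off-diagonal entries of $B^TB$ are $O(1)$ and there are $O(nd)$ of them while $d^2\to\infty$), so Markov gives $|U_{\epsilon,\delta}|\le \delta^{-1}o(\delta m)=o(m)$.

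The main obstacle I expect is establishing the \emph{uniform} marginal bound $\P^H(u\in\cT)=O(1/d)$ rigorously: the naive bound from $\Tr(P_H)=r\le m$ is only $O(1)$ on average, not pointwise, and one genuinely needs the spectral input that the nonzero eigenvalues of $L^-$ are bounded away from $0$ (equivalently that $P_H$ restricted to the high-mass eigenvectors sees eigenvalues near $1$) to convert the trace identity $\sum_i\lambda_i\psi_i(u)^2=(k+1)/d$ into a bound on $\sum_i\psi_i(u)^2$. Handling the low-eigenvalue part of the spectrum (eigenvalues close to $0$, where $\psi_i(u)^2$ could a priori be large even though $\lambda_i\psi_i(u)^2$ is small) is the delicate point, and is presumably where the $(\epsilon,\delta)$-structured machinery and \cref{clm:Nepsdel1} are invoked — unstructured vertices are precisely those for which almost all the $\ell^2$-mass sits on eigenvalues near $1$, giving the $O(1/d)$ bound, while the few structured vertices are discarded by statement~(3) itself. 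So in the final writeup statements (1)–(3) are somewhat intertwined and should be proved by a simultaneous induction on $r$, peeling off structured vertices at each step.
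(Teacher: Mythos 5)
Your proposal correctly identifies the central difficulty (a uniform $O(1/d)$ bound on $\P^H(u\in\cT)$ is not available directly, because low eigenvalues of $L^-$ could carry large mass at some $u$) and correctly sketches the spectral facts that resolve it in part: for $u\notin U_{\epsilon,\delta}$ one has $\P^H(u\in\cT)=\sum_i\psi_i(u)^2\le\frac{k+1}{(1-\sqrt\epsilon)d}+\delta=O(1/d)$, and $|U_{\epsilon,\delta}|=o(m)$ by the trace estimate $\Tr((L^--I)^2)=O(m/d)$ (these are exactly \cref{clm:Neps1} and \cref{clm:Nepsdel1}). The exploration/first-moment computations you sketch for (1) and (2) — branching process with $O(1)$ offspring at even levels via negative correlation, and an expected-cycle count of order $1/d$ — are also essentially the paper's \cref{lem:FTight}.

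The genuine gap is in how you propose to handle the circularity between statements~(1) and~(3). You acknowledge it and suggest a "simultaneous induction on $r$, peeling off structured vertices at each step," but this is where the argument would actually be hard to make rigorous: to bound $\E|B_\cT(o,r)|$ by exploration you must discard the event of hitting a structured vertex, yet the probability of hitting one is naturally bounded in terms of $\E|B_\cT(o,r)|$ itself. Negative correlation alone does not help, because $\P^H(u\in\cT)$ for structured $u$ can be close to $1$, so the per-step bound fails exactly where you need it. The paper breaks this circle with a device your plan does not mention: it introduces the \emph{truncated} subspace $H_I=\operatorname{span}\{\psi_i:(\lambda_i-1)^2\le\epsilon\}\subset H$ and the auxiliary determinantal process $\cF\sim\P^{H_I}$. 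For $\cF$ the uniform marginal bound $\P(u\in\cF)\le\frac{k+1}{(1-\sqrt\epsilon)d}$ holds for \emph{every} $u$, structured or not, so \cref{lem:FTight} proves tightness and treeness for $\cF$ with no circularity. One then uses $H_I\subset H\Rightarrow\P^{H_I}\preccurlyeq\P^H$ (\eqref{eq:dominate}) to couple $\cF\subset\cT$, and \cref{lem:TvsF} (a mass-transport argument: if the $r$-ball around a uniform root meets the small set $W$ of vertices incident to $\cT\setminus\cF$ or $\cF\cap U_{\epsilon,\delta}$, one can relocate the root to a vertex of $W$) shows $B_\cF(o,r)=B_\cT(o,r)$ w.h.p., which delivers all three statements at once. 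You should look for and exploit the monotone coupling $\cF\subset\cT$ rather than attempt a direct induction on $\cT$.
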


We first show that most of the $r$ positive eigenvalues of $L^-$ are close to $1$. Let 
$$  I :=\{i\in [r]~:~(\lambda_i- 1)^2\le \epsilon\} \, ,$$
and denote by  $H_I$ be the span of $\{\psi_i : i\in I\}$. 

\begin{claim}\label{clm:Neps1} We have that
$$(1-k(\epsilon d)^{-1})n \leq |I|\leq n \, .$$
\end{claim}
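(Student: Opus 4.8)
The plan is to exploit the trace of $L^-$, or equivalently of $L^+$, which is computable directly from the regularity of $G$. Since $B$ is a signed adjacency matrix of a $(d,k+1)$-bi-regular graph, each diagonal entry of $BB^T$ counts the degree of the corresponding vertex of $V$, so $\Tr(BB^T) = dn$ and hence $\Tr(L^+) = \frac{1}{d}\Tr(BB^T) = n$. Since $L^+$ and $L^-$ share the same nonzero spectrum $\lambda_1,\dots,\lambda_r$, this gives the identity $\sum_{i=1}^r \lambda_i = n$. The upper bound $|I|\le n$ is then immediate: the $\lambda_i$ with $i\in I$ satisfy $\lambda_i \ge 1-\sqrt{\epsilon} > 0$ for $n$ large, and more to the point one wants $|I| \le r \le n$, which holds trivially since $r=\rank(B)\le n$ (indeed $B$ has $n$ rows). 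Actually the cleanest route to $|I|\le n$ is just $|I|\le r\le n$.

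For the lower bound I would bound from above the number of "bad" indices $i\in[r]\setminus I$, i.e. those with $(\lambda_i-1)^2 > \epsilon$. First I would also compute $\Tr((L^-)^2)$, or rather control $\sum_i (\lambda_i-1)^2$ over all $i\in[r]$, by relating it to $\Tr(L^+ - \Pi)^2$ where $\Pi$ is the projection onto the row/column space; here the $C_4$-freeness enters. Concretely, $\sum_{i=1}^r(\lambda_i-1)^2 = \Tr\big((L^+ - \Pi_V)^2\big)$ where $\Pi_V$ is the orthogonal projection onto $\ima(B)$, and expanding, $(L^+)^2 = \frac{1}{d^2}(BB^T)^2$ has diagonal entries $\frac{1}{d^2}\sum_j (BB^T)_{ij}^2$; the off-diagonal entries $(BB^T)_{ij}$ for $i\ne j$ count (signed) common neighbors, which by $C_4$-freeness is $0$ or $\pm 1$, while the diagonal is $d$. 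A short computation then shows $\sum_{i=1}^r (\lambda_i - 1)^2 \le \frac{1}{d^2}\big(\Tr((BB^T)^2) - \text{something}\big)$ and the dominant error term is $O(n/d)$; more precisely one should get $\sum_{i\in[r]}(\lambda_i-1)^2 \le \frac{kn}{d}$ or similar, using that each vertex of $V$ has $d$ neighbors in $U$ each of degree $k+1$, so lies in a bounded number of "cherries." Then by Markov/counting, $\#\{i : (\lambda_i-1)^2 > \epsilon\} \le \frac{1}{\epsilon}\sum_i(\lambda_i-1)^2 \le \frac{kn}{\epsilon d}$, whence $|I| = r - \#\{\text{bad } i\} \ge r - \frac{kn}{\epsilon d}$. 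Combining with a lower bound $r \ge n - \text{(something small)}$ — or better, arguing directly that all but $\frac{kn}{\epsilon d}$ of the eigenvalues lie near $1$ and that there are at least $n - kn/(\epsilon d)$ of them counting the near-$1$ ones — yields $|I| \ge (1 - k(\epsilon d)^{-1})n$.

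The step I expect to be the main obstacle is getting the constant exactly right in $\sum_{i}(\lambda_i-1)^2 \lesssim kn/d$ and handling the interplay between $r$ and $n$ cleanly: one must be careful because $r$ can be strictly less than $n$, so "eigenvalues near $1$" should be counted among the $r$ positive eigenvalues only, and the quantity $n - r$ (the multiplicity of $0$) has to be accounted for — these zero eigenvalues are at distance $1$ from $1$, so they contribute $n-r$ to $\sum(\lambda_i-1)^2$ if one sums over all $n$ eigenvalues of $L^+$. The right bookkeeping is: $\sum_{i=1}^{r}(\lambda_i-1)^2 = \Tr((L^+)^2) - 2\Tr(L^+) + r = \Tr((L^+)^2) - 2n + r$, and then bounding $\Tr((L^+)^2) = \frac{1}{d^2}\Tr((BB^T)^2)$ via $C_4$-freeness gives $\Tr((L^+)^2) \le n + \frac{n k}{d}$ (the diagonal contributes $n$, the off-diagonal at most $nk/d$ since each row of $BB^T$ has at most $d\cdot k$ nonzero off-diagonal entries each $\pm1$, divided by $d^2$). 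This gives $\sum_{i=1}^r(\lambda_i-1)^2 \le n + \frac{nk}{d} - 2n + r = r - n + \frac{nk}{d} \le \frac{nk}{d}$ using $r\le n$. Then $\#\{i\in[r] : (\lambda_i-1)^2>\epsilon\} \le \frac{nk}{\epsilon d}$, and since $|I|$ is $r$ minus this count, $|I| \ge r - \frac{nk}{\epsilon d}$. Finally I would note $r - \frac{nk}{\epsilon d} \ge n - (n-r) - \frac{nk}{\epsilon d}$; to close the gap one shows $n - r$ is itself $o(n)$ — in fact $n-r \le \frac{nk}{\epsilon d}$ can be squeezed out of the same inequality since the $n-r$ zero eigenvalues would each contribute $1 > \epsilon$ to the bad count if $r$ were artificially small — giving the claimed $|I| \ge (1-k(\epsilon d)^{-1})n$ after adjusting the constant. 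I would double-check whether the paper intends a single factor $k$ or $2k$ here and match their normalization.
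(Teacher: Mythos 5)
Your approach is essentially the same as the paper's: compute $\Tr(L^+)=n$ and use $C_4$-freeness to evaluate the diagonal of $(L^+)^2$ (equivalently of $(L^+-\id_n)^2$), then do a Markov-type count of eigenvalues far from $1$. The one place where your bookkeeping, as written, loses ground is exactly the spot you flag at the end: if you bound $|I|\ge r-\tfrac{nk}{\epsilon d}$ and separately $r\ge n-\tfrac{nk}{\epsilon d}$, you end up with $|I|\ge n-\tfrac{2nk}{\epsilon d}$, which has an extra factor of $2$. The clean way to avoid this, and what the paper does, is to count \emph{all} $n$ eigenvalues of $L^+$ at once: the $n-r$ zero eigenvalues satisfy $(0-1)^2=1>\epsilon$, so they are automatically ``bad,'' and the total bad count is $(r-|I|)+(n-r)=n-|I|$. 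Then
\[
\epsilon\,(n-|I|)\;\le\;\sum_{\mu\ \mathrm{eigenvalue\ of\ } L^+}(\mu-1)^2\;=\;\Tr\bigl((L^+-\id_n)^2\bigr)\;=\;\frac{kn}{d}\,,
\]
where the last equality is exact because each row of $L^+-\id_n$ has precisely $kd$ nonzero entries, each equal to $\pm\tfrac1d$ (by $(d,k{+}1)$-bi-regularity and $C_4$-freeness). This gives the claim with the factor $k$, not $2k$, and you never need a separate lower bound on $r$. In other words, replace your identity $\sum_{i=1}^r(\lambda_i-1)^2=\Tr((L^+)^2)-2\Tr(L^+)+r$ by $\Tr((L^+-\id_n)^2)=\Tr((L^+)^2)-2\Tr(L^+)+n$ and work with the full spectrum.
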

\begin{proof}
The upper bound is obvious since $r\leq \min(n,m)$. For the lower bound we consider the operator $(L^+-\id_n)^2$ where $\id_n$ is the identity operator on $\R^n$.
On the one hand, 
\be\label{eq:Nepsmid}\Tr((L^+-\id_n)^2) = \sum_{i=1}^{r}(\lambda_i-1)^2 + n-r \ge (r-|I|)\epsilon+n-r\ge(n-|I|)\epsilon \, ,\ee
since $r \leq n$. On the other hand, for every $v_1,v_2 \in V$
\[
(L^+-\id_n)_{v_1,v_2}=
\left\{ 
\begin{matrix}
0&v_1=v_2 \, ,\\ \pm\frac1d & N(v_1)\cap N(v_2) \neq \emptyset \, ,  \\
0 & \text{otherwise} \, ,
\end{matrix}
\right.
\]
since $N(v_1) \cap N(v_2) \in \{0,1\}$ because $G$ is $C_4$-free. Due to the same reason, the set of vertices $v_2$ such that $N(v_1) \cap N(v_2)=1$ is of size precisely $kd$. Hence $\Tr((L^+-\id)^2)=kn/d$. Together with \eqref{eq:Nepsmid} the claim follows directly.
\end{proof}

\begin{claim}\label{clm:Nepsdel1} We have that
$$ |U_{\epsilon,\delta}| \le {mk(k+1)\over  d^2\epsilon\delta} \, .$$
\end{claim}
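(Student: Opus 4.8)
The plan is to bound the number of $(\epsilon,\delta)$-structured vertices by a first-moment/trace argument, exactly parallel to the proof of \cref{clm:Neps1}. Summing the defining inequality \eqref{eq:typcial} over all $u \in U_{\epsilon,\delta}$ gives
\[
\delta\,|U_{\epsilon,\delta}| \;\le\; \sum_{u\in U_{\epsilon,\delta}} \sum_{i:(\lambda_i-1)^2>\epsilon}\psi_i(u)^2 \;\le\; \sum_{u\in U}\sum_{i:(\lambda_i-1)^2>\epsilon}\psi_i(u)^2 \;=\; \sum_{i:(\lambda_i-1)^2>\epsilon}\|\psi_i\|^2 \;=\; |\{i:(\lambda_i-1)^2>\epsilon\}|\,,
\]
using that the $\psi_i$ are unit vectors. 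So it suffices to show that the number of ``bad'' eigenvalues $|\{i : (\lambda_i-1)^2>\epsilon\}|$ is at most $mk(k+1)/(d^2\epsilon)$.

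To do this I would repeat the trace computation from \cref{clm:Neps1} but on the $U$-side, i.e. for the operator $(L^- - \id_m)^2$ acting on $\R^m$. On the one hand, since $L^-$ has exactly the $r$ positive eigenvalues $\lambda_1,\dots,\lambda_r$ (the same as $L^+$) together with eigenvalue $0$ with multiplicity $m-r$,
\[
\Tr\big((L^- - \id_m)^2\big) \;=\; \sum_{i=1}^r (\lambda_i-1)^2 + (m-r) \;\ge\; \sum_{i:(\lambda_i-1)^2>\epsilon}(\lambda_i-1)^2 \;>\; \epsilon\cdot|\{i:(\lambda_i-1)^2>\epsilon\}|\,.
\]
On the other hand, $L^- = \frac1d B^T B$, so $(L^-)_{u_1,u_2} = \frac1d \sum_{v} B_{v,u_1}B_{v,u_2}$; the diagonal entry is $\frac1d\cdot|N(u_1)| = (k+1)/d$ since $G$ is $(d,k+1)$-bi-regular, and the off-diagonal entry $(L^-)_{u_1,u_2}$ for $u_1\ne u_2$ is $\pm\frac1d$ if $u_1,u_2$ have a common neighbor and $0$ otherwise (here $C_4$-freeness gives $|N(u_1)\cap N(u_2)|\le 1$). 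Hence $(L^- - \id_m)_{u,u} = (k+1)/d - 1$ and the off-diagonal structure is unchanged. Each $u$ has $k+1$ neighbors, each of degree $d$, so the number of $u_2\ne u$ sharing a common neighbor with $u$ is exactly $(k+1)(d-1)$ (again using $C_4$-freeness to avoid double-counting). Therefore
\[
\Tr\big((L^- - \id_m)^2\big) \;=\; m\left(\tfrac{k+1}{d}-1\right)^2 + m\,(k+1)(d-1)\cdot\tfrac1{d^2}\,.
\]
Now $\big(\tfrac{k+1}{d}-1\big)^2 = 1 - \tfrac{2(k+1)}{d} + \tfrac{(k+1)^2}{d^2}$ and $(k+1)(d-1)/d^2 = \tfrac{k+1}{d} - \tfrac{k+1}{d^2}$, so these combine to
\[
\Tr\big((L^- - \id_m)^2\big) \;=\; m\left(1 - \tfrac{k+1}{d} + \tfrac{(k+1)^2 - (k+1)}{d^2}\right) \;=\; m\left(1 - \tfrac{k+1}{d} + \tfrac{k(k+1)}{d^2}\right)\,.
\]

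To finish, I need an \emph{upper} bound on this trace by $mk(k+1)/d^2$, but the computation above gives $m(1 - (k+1)/d + k(k+1)/d^2)$, which is close to $m$, not to $mk(k+1)/d^2$ — so a naive application does not suffice. The resolution, and the one delicate point, is to instead use that $L^- = \frac1d B^T B$ and $L^+ = \frac1d BB^T$ share the same nonzero spectrum, so $\sum_{i=1}^r(\lambda_i-1)^2 = \Tr((L^+-\id_n)^2) - (n-r) + \sum\text{(correction)}$; more cleanly, from the proof of \cref{clm:Neps1} we already have $\sum_{i=1}^r(\lambda_i-1)^2 \le \Tr((L^+-\id_n)^2) = kn/d$, and since $n = m(k+1)/d$ this is $mk(k+1)/d^2$. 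Combining with the lower bound $\sum_{i:(\lambda_i-1)^2>\epsilon}(\lambda_i-1)^2 > \epsilon\,|\{i:(\lambda_i-1)^2>\epsilon\}|$ already established yields $|\{i:(\lambda_i-1)^2>\epsilon\}| < mk(k+1)/(d^2\epsilon)$, and dividing the earlier displayed inequality by $\delta$ gives $|U_{\epsilon,\delta}| \le mk(k+1)/(d^2\epsilon\delta)$, as claimed. Thus the main obstacle is recognizing that one should count bad eigenvalues via the $V$-side trace $\Tr((L^+-\id_n)^2) = kn/d$ from \cref{clm:Neps1} rather than the $U$-side trace, and then translate $n$ to $m$ using bi-regularity; everything else is a one-line first-moment bound.
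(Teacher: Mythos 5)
Your proposal is correct and, after the detour, lands on essentially the same argument as the paper: bound $\delta\,|U_{\epsilon,\delta}|$ by $\sum_{i\notin I}\|\psi_i\|^2 = r-|I|$, and then bound the number of ``bad'' eigenvalue indices $r-|I|$ via the $V$-side trace identity $\Tr\big((L^+-\id_n)^2\big)=kn/d$ (which is exactly what the paper's \cref{clm:Neps1} encodes), finishing with $n=m(k+1)/d$. The $U$-side trace computation $\Tr\big((L^--\id_m)^2\big)=m\big(1-\tfrac{k+1}{d}+\tfrac{k(k+1)}{d^2}\big)$ is in fact correct, but as you correctly observe it is swamped by the $m-r$ zero eigenvalues of $L^-$, each contributing $1$; the key realization you reach --- that one must count bad eigenvalues through the $V$-side trace (where the zero eigenvalues contribute only $n-r\le kn/d$) --- is precisely the point, and the rest is a one-line Markov bound. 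The only cosmetic difference from the paper is that you re-derive the eigenvalue count from the trace identity rather than citing the statement of \cref{clm:Neps1} directly, but that is the same computation.
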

\begin{proof}
The sum $\sum_{i\notin I}\|\psi_i\|^2=r-|I|\le nk(\epsilon d)^{-1}$ by \cref{clm:Neps1}. On the other hand, every $u\in U_{\epsilon,\delta}$ contributes at least $\delta$ to that sum. Hence $|U_{\epsilon,\delta}| \leq nk (d\eps \delta)^{-1}$ and since $n=m(k+1)/d$ the claim follows.
\end{proof}

Let $P_{H_I}$ denote the orthogonal projection operator onto $H_I=$span$\{\psi_i : i\in I\}$, i.e., 
\be\label{eq:phi} P_{H_I} = \sum_{i : (\lambda_i-1)^2 \leq \eps } \psi_i \psi_i^T \, . \ee
We denote the corresponding determinantal process by $\P^{H_I}$ so that
$$ \P^{H_I}(F) = \det(P_{H_I} \res F) \, ,$$
and we write $\cF$ for a sample of $\P^{H_I}$.
{For $v\in V$ and an integer $r\geq 0$, we abbreviate  $B_\cF(v,r)$ for $B_{G[\cF]}(v,r)$, where $G[\cF]$ is the subgraph of $G$ induced by $V\cup \cF$.}

\begin{lemma} \label{lem:FTight} For any fixed integer $r>0$ and any $v\in V$ we have 
$$ \lim_{t\to\infty} \lim_{n\to\infty} \P\Big ( |B_\cF(v,r)|>t \Big ) = 0 \, ,$$
and
$$ \lim_{n\to\infty} \P\Big ( B_\cF(v,r) \mathrm{\ is\ a\ tree} \Big ) = 1 \, .$$
\end{lemma}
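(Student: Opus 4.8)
The goal is to control the local structure of $B_\cF(v,r)$, where $\cF$ is sampled from the "truncated" determinantal measure $\P^{H_I}$ supported on the eigenvalues close to $1$. The key point is that $P_{H_I}$ is close to $P_H$, which in turn is close to $\frac1d B^TB$ on the relevant coordinates, so the measure $\P^{H_I}$ behaves like a product/Poisson-type measure locally. Concretely, I would first establish good \emph{pointwise} bounds on the entries of $P_{H_I}$. For a single $u\in U$, the marginal is $\P^{H_I}(u\in\cF)=(P_{H_I})_{u,u}=\sum_{i\in I}\psi_i(u)^2$. Since $\sum_{i=1}^r\psi_i(u)^2=(P_H)_{u,u}=\frac1d(B^TB)_{u,u}=\frac{k+1}{d}$ (each $u$ has degree $k+1$ and $B$ is signed), and the missing mass $\sum_{i\notin I}\psi_i(u)^2\le\delta$ unless $u\in U_{\epsilon,\delta}$, we get that for $u\notin U_{\epsilon,\delta}$ the marginal $\P^{H_I}(u\in\cF)$ is at most $\frac{k+1}{d}$ exactly and at least $\frac{k+1}{d}-\delta$; in all cases it is $O(1/d)$. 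Similarly, for $u\ne u'$ both adjacent to a common $v\in V$ (the only pairs that can appear together in $B_\cF(v,r)$ with small balls, since $G$ is $C_4$-free), $(P_{H_I})_{u,u'}$ is close to $(P_H)_{u,u'}=\frac1d(B^TB)_{u,u'}=\pm\frac1d$.

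\textbf{Main estimate.} With these entrywise bounds, I would bound $\E|B_\cF(v,r)|$ by a first-moment/union-bound argument over potential vertices, using the marginal formula \eqref{eq:PHmarginal}: for a path $v=v_0,u_1,v_1,u_2,\dots$ in $G$ of length up to $r$, the probability that all the $u_j$'s lie in $\cF$ is $\det(P_{H_I}\res\{u_1,\dots\})$, which by Hadamard's inequality (or negative correlation \eqref{eq:negative}, which $\P^{H_I}$ also enjoys as a determinantal measure) is at most $\prod_j (P_{H_I})_{u_j,u_j}\le \prod_j\frac{k+1}{d}$. Summing over all such paths: from each $v_j$ there are $d$ choices of next $u$, and from each $u$ there are $k$ further choices of next $v$ (the $u$ has $k+1$ neighbors, one being $v_j$), so the number of length-$2t$ paths from $v$ is at most $d^t k^{t}\cdot(\text{const})$, each contributing $(\frac{k+1}{d})^t$, giving $\E(\#\text{vertices at distance }2t)=O((k(k+1))^t)$, a constant depending only on $k$ and $t$. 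This yields the first assertion, $\lim_{t\to\infty}\lim_{n\to\infty}\P(|B_\cF(v,r)|>t)=0$, by Markov's inequality. For the "is a tree" assertion, a cycle in $B_\cF(v,r)$ would either (i) be a cycle already present in $G$ of length $\le 2r$ through $v$ with all its $U$-vertices in $\cF$, or (ii) involve two vertices of $U$ with a common neighbor, impossible since $G$ is $C_4$-free (so the shortest cycle has length $\ge 6$ and is of the first type). For type (i): the expected number of such cycles is bounded by summing over closed walks; a cycle of length $2s$ ($3\le s\le r$) uses $s$ vertices of $U$, contributing at most $(\frac{k+1}{d})^s$ by negative correlation, and there are at most $d^{s-1}k^{s}\cdot O(1)$ such cycles through a fixed $v$ (the last $v$-vertex closes up, removing one factor of $d$), so the total expectation is $O((k(k+1))^s/d)=O(1/d)\to 0$.

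\textbf{Main obstacle.} The delicate point is that $\P^{H_I}$ is a determinantal measure with kernel $P_{H_I}$ that is \emph{not} exactly equal to $\frac1d B^TB$ restricted to $H$; it only agrees with $P_H$ up to the $\delta$-errors captured by $U_{\epsilon,\delta}$, and $P_H$ itself only agrees with $\frac1d B^TB$ when the relevant eigenvalues are near $1$ (which is what $I$ encodes). So one must be careful that the first-moment sums do not secretly pick up contributions from structured vertices. The clean way around this is to note that $\P^{H_I}$ is stochastically dominated in a suitable sense is not quite available; instead, one uses the exact identity $(P_{H_I})_{u,u}\le (P_H)_{u,u}=\frac{k+1}{d}$, which holds for \emph{every} $u$ with no exceptions because $P_{H_I}\preccurlyeq P_H$ as positive semidefinite operators (removing the $i\notin I$ terms only decreases diagonal entries). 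This uniform diagonal bound $(P_{H_I})_{u,u}\le\frac{k+1}{d}$ is all that is needed for the first-moment arguments above — the $\delta$-structure issue is only relevant later (for \cref{thm:TisTight}(3)), not here. Thus the proof of \cref{lem:FTight} reduces to: (a) the uniform bound $(P_{H_I})_{u,u}\le\frac{k+1}{d}$ via operator monotonicity, (b) negative correlation / Hadamard to turn joint marginals into products, and (c) a routine count of walks in a $(d,k+1)$-biregular $C_4$-free graph. Step (c) is the only one with real content and it is elementary; I expect no serious difficulty.
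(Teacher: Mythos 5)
There is a genuine gap in the key marginal bound. You assert the chain of equalities $\sum_{i=1}^r \psi_i(u)^2 = (P_H)_{u,u} = \tfrac1d(B^TB)_{u,u} = \tfrac{k+1}{d}$, and later lean on this as an exact identity ("the uniform diagonal bound $(P_{H_I})_{u,u}\le(P_H)_{u,u}=\tfrac{k+1}{d}$"). The first equality is fine, but the second is false: $P_H = \sum_i \psi_i\psi_i^T$ while $L^- = \tfrac1d B^TB = \sum_i \lambda_i\psi_i\psi_i^T$, so $(P_H)_{u,u} = \sum_i\psi_i(u)^2$ whereas $(L^-)_{u,u} = \sum_i\lambda_i\psi_i(u)^2 = \tfrac{k+1}{d}$, and these differ because the $\lambda_i$ are not all $1$. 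In fact $(P_H)_{u,u}$ can be as large as $1$ for a given $u$; there is no uniform $O(1/d)$ bound on it. So your claimed route to a uniform $O(1/d)$ marginal fails.

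The fix is exactly where the truncation to $I$ earns its keep (and it is what the paper does): for $i\in I$ one has $\lambda_i\ge 1-\sqrt\epsilon$, hence
\[
(P_{H_I})_{u,u}=\sum_{i\in I}\psi_i(u)^2\le\frac{1}{1-\sqrt\epsilon}\sum_{i\in I}\lambda_i\psi_i(u)^2\le\frac{1}{1-\sqrt\epsilon}(L^-)_{u,u}=\frac{k+1}{(1-\sqrt\epsilon)d}\,,
\]
which is $(1+o(1))(k+1)/d$ uniformly in $u$. Note that the analogous bound for $(P_H)_{u,u}$ is \emph{not} available — it is specifically the restriction to eigenvalues near $1$ that makes the reweighting work — so the remark in your "Main obstacle" paragraph that the $\delta$-structure issue "is only relevant later" is misleading: the spectral truncation is essential already for the marginal estimate. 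Once the corrected bound is in place, the rest of your argument (negative correlation for $\P^{H_I}$ plus a walk count in a $(d,k+1)$-biregular $C_4$-free graph) is sound and very close to the paper's; the paper packages the tightness part slightly differently (if $|B_\cF(v,r)|>t$ there is a vertex of degree at least $t^{1/(r+1)}$, and one bounds the probability of such a high-degree vertex via a binomial tail) rather than a bare first-moment bound on $|B_\cF(v,r)|$, but both yield the stated conclusion. Your cycle argument for the tree assertion matches the paper's.
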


\begin{proof} Using \eqref{eq:PHmarginal} and  \eqref{eq:phi} and that $\lambda_i \geq 1-\sqrt{\eps}$ for all $i\in I$ we get that for every $u \in U$
\begin{equation}\label{eq:sigmaF}
\P(u \in \cF) = \sum_{i\in I} \psi_{i,u}^2 \le \frac{1}{1-\sqrt{\epsilon}} \sum_{i=1}^{m}\lambda_i \psi_{i,u}^2 = {L^-_{u,u} \over 1-\sqrt{\eps}} =\frac{k+1}{(1-\sqrt{\eps})d},
\end{equation}
where in the last equality we used that $L^-_{u,u}=(k+1)/d$  since every column of $B$ has precisely $k+1$ nonzero entries all of which are $\pm 1$.


Next observe that that if all the vertex degrees of $B_\cF(v,r)$ bounded above by some $M\geq 2$, then $|B_\cF(v,r)|\leq M^{r+1}$. Hence, if $|B_\cF(v,r)|\geq t$, then there must be a vertex of degree at least $t^{1 \over r+1}$. When $t$ is larger than $(k+1)^{r+1}$, this has to be a vertex in $V$ since the degrees in $G$ of vertices of $U$ are $k+1$. Hence there exists a simple path of length $r' \leq r$ in $B_\cF(v,r)$ from $v$ to some $v_1\in V$ and $v_1$ has degree at least $t'=t^{1 \over r+1}\to \infty$ as $t\to\infty$. Note that $r'$ must be even. The number of $v_1$'s that are at graph distance at most $r'$ in $G$ is at most $(dk)^{r'/2}$. For such a path to be included in $B_\cF(v,r)$ we must have that all its $r'/2$ elements of $U$ are in $\cF$ and additional $t'-1$ neighbors of $v_1$ are in $\cF$. Therefore, by negative correlation \eqref{eq:negative} and \eqref{eq:sigmaF} we obtain
\begin{align}
\P(|B_\cF(v,r)|>t)\le & \sum_{r'\leq r}  
(dk)^{r'/2}\binom{d}{t'-1}\left(\frac{k+1}{(1-\sqrt{\eps})d}\right)^{r'/2+t'-1}
\nonumber
\\ \le& \sum_{r'\leq r} (k+1)^{r'}\left(\frac{e(k+1)}{t'}\right)^{t'-1} (1-\sqrt{\eps})^{-r'/2-t'} \, , \label{eq:B_F}
\end{align}
where we used $\binom{d}{t'-1}\leq (ed/(t'-1))^{t'-1}$. When taking first $n\to \infty$ the term $(1-\sqrt{\eps})^{-r'/2-t'}$ converges to $1$; then taking $t\to\infty$ gives convergence to $0$ since $k$ and $r$ are fixed and $t'\to\infty$ as $t\to\infty$. This proves the first assertion of the lemma. 

{
Next, if $B_{\mathcal F}(v,r)$ contains a cycle then there exists a simple path in $B_{\mathcal F}(v,r)$ from $v$ to some vertex $w$ of length $r'$, and a cycle rooted at $w$ of length $2\ell$, disjoint from the path, such that $r'+\ell\le r$. 
The number of such simple paths in $G$ is at most $(kd)^{\lceil r'/2\rceil}$. Moreover, once $w$ is fixed, the number of such cycles is at most $(1+1/k)(kd)^{\ell-1}$. Indeed, since $G$ is $C_4$-free, the final vertex of the cycle is determined by the preceding ones. The factor $1+1/k$ accounts for the case $w\in U$, in which there are $k+1$ choices for the first neighbor of $w$ on the cycle.
For a given path-cycle pair to be contained in $B_{\mathcal F}(v,r)$, all the $\lceil r'/2\rceil+\ell$ vertices of $U$ appearing in it must belong to $\mathcal F$. Hence, by negative correlation \eqref{eq:negative} and \eqref{eq:sigmaF}, we obtain
\begin{eqnarray*}
\P\Big( B_{\mathcal F}(v,r) \mathrm{\ is\ not\ a\ tree} \Big)
&\leq&
2\sum_{r'+\ell\leq r}
(kd)^{\lceil r'/2\rceil+\ell-1}
\left(\frac{k+1}{(1-\sqrt{\eps})d}\right)^{\lceil r'/2\rceil+\ell}
\\
&\leq&
\frac{2}{d}
\sum_{r'+\ell\leq r}
(k+1)^{r'+2\ell} (1-\sqrt{\eps})^{-\lceil r'/2\rceil-\ell} \longrightarrow 0 \, ,
\end{eqnarray*}
as $n\to \infty$ since $\eps \to 0$ and $d\to \infty$ and $r$ is fixed. This proves the second assertion of the lemma and concludes the proof.
}
\end{proof}

By \eqref{eq:dominate} we may couple so that $\cF \subset \cT$ almost surely, so we may assume we work in this coupling measure $\P$.

\begin{lemma}\label{lem:TvsF} Denote by $W\subset V$ the random set of vertices that incident to either $\cT \setminus \cF$ or to $\cF \cap U_{\eps,\delta}$. Let $o$ be a uniformly drawn vertex of $V$ that is independent from $\cF$ and $\cT$. Then
for any fixed integer $r>0$  we have
\[\lim_{n\to\infty}\P(B_\cF(o,r)\cap W\neq\emptyset)=0 \, .
\]
\end{lemma}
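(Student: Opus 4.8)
\textbf{Proof proposal for \cref{lem:TvsF}.}

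The plan is to bound the probability that $B_\cF(o,r)$ meets $W$ by a union bound over the vertices in $W$, exploiting that $W$ is small (of size $o(n)$ in expectation) and that the determinantal measure $\P^{H_I}$ has marginals of order $1/d$ on each $u\in U$. First I would decompose $W = W_1 \cup W_2$, where $W_1$ is the set of vertices incident to $\cT\setminus\cF$ and $W_2$ is the set of vertices incident to $\cF \cap U_{\eps,\delta}$; it suffices to handle each separately. For $W_2$ the bound is immediate from \cref{clm:Nepsdel1}: in the coupling $\cF\subset\cT$ we have $\E|W_2| \le (k+1)\,\E|\cF\cap U_{\eps,\delta}| \le (k+1)\sum_{u\in U_{\eps,\delta}}\P(u\in\cF) \le (k+1)\cdot \tfrac{k+1}{(1-\sqrt\eps)d}\cdot \tfrac{mk(k+1)}{d^2\eps\delta}$, which by $n = m(k+1)/d$ and \eqref{eq:eps_del} is $o(n)$. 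For $W_1$ I would use that $\E|\cT\setminus\cF| = \sum_{u} (\P(u\in\cT) - \P(u\in\cF)) = \Tr(P_H) - \Tr(P_{H_I}) = r - |I| \le nk(\eps d)^{-1} = o(n)$ by \cref{clm:Neps1}, so $\E|W_1| \le (k+1)(r-|I|) = o(n)$.

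Next I would combine smallness of $W$ with control on the size of the explored ball. Fix $t$; on the event $\{|B_\cF(o,r)| \le t\}$, if $B_\cF(o,r)$ meets $W$ then $o$ lies within graph distance $r$ in $G$ of some vertex $w\in W$ \emph{that is actually connected to $o$ through $\cF$}. The cleanest route is: condition on $\cF$ and $\cT$ (hence on $W$), and then use that $o$ is uniform and independent. Given $\cF,\cT$, the event $B_\cF(o,r)\cap W\ne\emptyset$ forces $o$ to lie in $\bigcup_{w\in W} B_G(w,r)$, an event of probability at most $|W|\cdot (dk)^{r}/n$ over the choice of $o$ — but this naive bound is useless since $(dk)^r/n \not\to 0$. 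So instead I would reverse the roles: write
\[
\P(B_\cF(o,r)\cap W\ne\emptyset)
= \frac1n \sum_{v\in V} \P\bigl(B_\cF(v,r)\cap W\ne\emptyset\bigr)
\le \frac1n \,\E\!\left[\sum_{v\in V}\one\{B_\cF(v,r)\cap W\ne\emptyset\}\right].
\]
For each $w\in W$, the number of $v$ with $w\in B_\cF(v,r)$ is at most $\max_{v}|B_\cF(v,r)|$ over the relevant $v$'s, but more usefully it is at most the number of $v$ with $B_\cF(v,r)\ni w$, and by symmetry $\sum_{v}\one\{w\in B_\cF(v,r)\} = \sum_{v}\one\{v\in B_\cF(w,r)\} \le |B_\cF(w,r)|$. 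Hence $\sum_v \one\{B_\cF(v,r)\cap W\ne\emptyset\} \le \sum_{w\in W} |B_\cF(w,r)|$, and
\[
\P(B_\cF(o,r)\cap W\ne\emptyset) \le \frac1n\,\E\!\left[\sum_{w\in W}|B_\cF(w,r)|\right].
\]

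Now I would split according to whether $|B_\cF(w,r)| > t$. For the bounded part, $\E\bigl[\sum_{w\in W}|B_\cF(w,r)|\one\{|B_\cF(w,r)|\le t\}\bigr] \le t\,\E|W| = o(tn)$, which divided by $n$ tends to $0$ for fixed $t$. For the unbounded part, $\E\bigl[\sum_{w\in W}|B_\cF(w,r)|\one\{|B_\cF(w,r)|>t\}\bigr] \le \sum_{v\in V}\E\bigl[|B_\cF(v,r)|\one\{|B_\cF(v,r)|>t\}\bigr] = n\cdot\E\bigl[|B_\cF(o,r)|\one\{|B_\cF(o,r)|>t\}\bigr]$ (bounding $\one\{w\in W\}\le 1$ and using symmetry), so dividing by $n$ and sending $n\to\infty$ then $t\to\infty$, the tail expectation $\E[|B_\cF(o,r)|\one\{|B_\cF(o,r)|>t\}]$ vanishes by \cref{lem:FTight} — here I would note that \cref{lem:FTight} gives a tail bound on $|B_\cF(v,r)|$ uniform in $v$ (indeed inequality \eqref{eq:B_F} is $v$-independent) with summable decay in $t$, which upgrades convergence in probability to convergence of the truncated expectation. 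Taking $n\to\infty$ and then $t\to\infty$ finishes the proof.

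\textbf{Main obstacle.} The delicate point is the interchange of the "$W$ is small" estimate with the "$B_\cF$ is small" estimate: a crude union bound over $w\in W$ of $\P(o\in B_G(w,r))$ fails because each ball in $G$ has $\sim (dk)^r = n^{o(1)}\cdot(\text{growing})$ vertices. The resolution is the symmetry trick $\sum_v\one\{w\in B_\cF(v,r)\}\le|B_\cF(w,r)|$, which replaces the deterministic ball size in $G$ by the \emph{$\cF$-ball} size, which \cref{lem:FTight} controls; one must be a little careful that \cref{lem:FTight}'s bound is uniform in the base vertex (it is, by inspection of \eqref{eq:B_F}) and strong enough — the explicit double-exponential-in-$t'$ decay in \eqref{eq:B_F} — to control the truncated first moment, not merely the probability.
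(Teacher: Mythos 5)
Your proposal is correct and follows the paper's argument in essentially every step: bound $\E|W|=o(n)$ via \cref{clm:Neps1}, \cref{clm:Nepsdel1} and \eqref{eq:sigmaF}, then exploit the symmetry $w\in B_\cF(v,r)\iff v\in B_\cF(w,r)$ together with the tightness supplied by \cref{lem:FTight}. The one cosmetic difference is that the paper conditions on the event $\{|B_\cF(o,2r)|\le t\}$, so it only needs a tail probability, whereas you split the sum $\sum_{w\in W}|B_\cF(w,r)|$ term by term and therefore need the uniform truncated first-moment bound $\sup_v \E\bigl[|B_\cF(v,r)|\one\{|B_\cF(v,r)|>t\}\bigr]\to 0$ --- which, as you correctly observe, is available because the decay in \eqref{eq:B_F} is super-polynomial in $t$ and uniform in $v$.
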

\begin{proof} Let $t\geq 1$ be a fixed integer. For any $o\in V$, if $B_\cF(o,r) \cap W \neq \emptyset$ and $|B_\cF(o,2r)|\leq t$, then there must exist $w\in V$ such that 
$$ w\in W , \quad |B_\cF(w, r)| \leq t , \quad  o \in B_\cF(w,r) \, .$$
Since $o$ is a uniformly drawn vertex independent from $\cF$ and $\cT$ we get
\be\label{eq:TvsFmid} \P (B_\cF(o,r) \cap W \neq \emptyset, |B_\cF(o,2r)|\leq t ) \leq {t \E|W| \over n} \, .\ee
Next we have that 
$$ \E |W| \leq (k+1)\E\big [|\cT| - |\cF| + |\cF \cap U_{\eps,\delta}|\big ] \, .$$
Since $|\cF|=\dim(H_I)=|I|$ and $|\cT|=r\leq n$ we have that $|\cT| - |\cF| \leq (\eps d)^{-1} k n$ by \cref{clm:Neps1}. By \eqref{eq:sigmaF} and  \cref{clm:Nepsdel1} we have
$$ \E |\cF \cap U_{\eps,\delta}| = \sum_{u \in U_{\eps,\delta}} \P(u\in \cF) \leq \frac{k|U_{\eps,\delta}|}{(1 - \sqrt{\eps})d} \leq \frac{nk^2}{(1 - \sqrt{\eps})\eps \delta d^2} \, .$$
We use this to bound the left hand side of \eqref{eq:TvsFmid}. Thus, for any $t \geq 1$ we get that
$$\P(B_\cF(o,r)\cap W\neq\emptyset) \leq \P(|B_\cF(o,2r)| > t) + {kn \over \eps d} + {tk^2 \over (1-\sqrt{\eps})\eps \delta d^2} \, .$$
Hence, by choosing $t$ large enough and using \cref{lem:FTight} we obtain the desired result.
\end{proof}

\begin{proof}[Proof of \cref{thm:TisTight}]
Since $\cF \subset \cT$ we always have that $B_\cF(o,r) \subset B_\cT(o,r)$. This containment is strict only if $B_\cF(o,r)$ contains a vertex $v\in V$ that is incident to a vertex $u\in \cT \setminus \cF$. The latter occurs with probability $o(1)$ by \cref{lem:TvsF}. Hence for any integer $r>0$ 
\be \lim_n \P \Big ( B_\cF(o,r) = B_\cT(o,r) \Big ) = 1 \, ,\label{eq:same_balls}\ee
which together with \cref{lem:FTight} implies the first two assertion of the theorem. The third assertion also follows since $B_\cF(o,r) \cap U_{\eps,\delta}=\emptyset$ with probability $1-o(1)$ by \cref{lem:TvsF}.
\end{proof}

\subsection{Inclusion}

For any subgraph $T$ of $G=(V,U,E)$ we write $V(T)$ and $U(T)$ for the vertices of $T$ belonging to $V$ and $U$, respectively. For such a subgraph and $K\subset V(T)$, recall that we write $m(K,T)$ for the number of matchings in $T$ saturating $K \cup U(T)$.

\begin{lemma}\label{lem:inclusion} Let $(T,o)$ be a valid rooted finite tree which is a subgraph of $G$ and $o\in V$. Assume that $U(T) \cap U_{\eps,\delta} = \emptyset$. Then
\[
\P(U(T) \subset \cT) \le (1+o(1)) d^{-|U(T)|} m(\emptyset, T) \, .
\]
\end{lemma}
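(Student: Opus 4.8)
The goal is to show that $\P(U(T)\subset\cT)$ is essentially $d^{-|U(T)|}$ times the number $m(\emptyset,T)$ of matchings of $T$ saturating $U(T)$. The natural starting point is the exact determinantal marginal formula \eqref{eq:PHmarginal}, which gives $\P(U(T)\subset\cT)=\det(P_H\restriction U(T))$, so the whole problem reduces to estimating this principal minor of the projection matrix. The plan is to expand this determinant and argue that, under the structural hypothesis $U(T)\cap U_{\eps,\delta}=\emptyset$, the matrix $P_H\restriction U(T)$ is close to $\tfrac1d$ times the $U(T)\times U(T)$ submatrix of $B^TB=dL^-$, whose entries are $0,\pm1$ and which encodes exactly the incidence/adjacency structure of $T$; the permanent-like expansion of its determinant then produces precisely the matchings counted by $m(\emptyset,T)$.

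\textbf{Key steps.} First, I would write $P_H=\sum_{i=1}^r\psi_i\psi_i^T$ and split $P_H=P_{H_I}+P_{H\setminus H_I}$ where $H_I$ collects the eigenvectors with $(\lambda_i-1)^2\le\eps$. For $u\in U(T)$ the hypothesis $u\notin U_{\eps,\delta}$ says $\sum_{i\notin I}\psi_i(u)^2\le\delta$, so the ``bad'' part $P_{H\setminus H_I}\restriction U(T)$ has all diagonal entries $\le\delta$ and, by Cauchy--Schwarz, all off-diagonal entries $\le\delta$ as well; hence this contribution has operator norm $O(\delta\,|U(T)|)=O(\delta)$. Second, I would compare $P_{H_I}$ with $\tfrac1d B^TB$: since $\tfrac1d B^TB=\sum_i\lambda_i\psi_i\psi_i^T$ and $\lambda_i=1+O(\sqrt\eps)$ on $I$, we have $\|P_{H_I}-(\tfrac1d B^TB)\restriction H_I\|=O(\sqrt\eps)$, and the tail $\sum_{i\notin I}\lambda_i\psi_i\psi_i^T$ restricted to $U(T)$ is again controlled entrywise by the non-structured hypothesis together with the bound $\lambda_i=O(1)$ (all $\lambda_i\le k+1$ since $\Tr L^-=(k+1)m/d\cdot$, or more simply $\|L^-\|\le k+1$). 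Combining, $P_H\restriction U(T)=\tfrac1d M+E$ where $M=(B^TB)\restriction U(T)$ has bounded entries and $\|E\|=O(\sqrt\eps+\delta)$, while each entry of $\tfrac1d M$ is $O(1/d)$. Third, I would expand $\det(\tfrac1d M+E)$ multilinearly over the $|U(T)|$ rows; the pure $\tfrac1d M$ term equals $d^{-|U(T)|}\det M$, and any term using at least one row of $E$ is, after bounding the remaining rows in $\ell^2$, of size $O(d^{-|U(T)|+1}(\sqrt\eps+\delta)d)=O(d^{-|U(T)|}(\sqrt\eps\, d+\delta d))$, hmm --- one must be careful here: a row of $\tfrac1dM$ has $\ell^2$-norm $O(1/d)$ but a row of $E$ only $O(\sqrt\eps+\delta)$, so replacing one $\tfrac1dM$ row by an $E$ row costs a factor $O((\sqrt\eps+\delta)d)$, which is \emph{not} $o(1)$ under \eqref{eq:eps_del}. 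This forces a more refined argument: one should instead work with $P_{H_I}$ directly and note $\|P_{H_I}\restriction U(T)-\tfrac1d M\|$ decomposes into an $O(\sqrt\eps/d)$ spectral part plus an entrywise-$O(\delta)$ part supported on the bad directions, and crucially expand around $P_{H_I}$ rather than $P_H$, using $\det(P_H\restriction U(T))=\det(P_{H_I}\restriction U(T))+O(\delta)$ (from the first step, since the bad part has small norm and the matrix is a contraction).

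\textbf{Finishing and the main obstacle.} Granting $\det(P_H\restriction U(T))=\det(P_{H_I}\restriction U(T))+O(\delta^{1/2})$ and that $P_{H_I}$ agrees with $\tfrac1dB^TB$ on $H_I$ up to $O(\sqrt\eps)$, the remaining task is the combinatorial identification: $\det\big(\tfrac1d(B^TB)\restriction U(T)\big)=d^{-|U(T)|}\det\big((B^TB)\restriction U(T)\big)$, and the Leibniz expansion of $\det((B^TB)\restriction U(T))$ over permutations of $U(T)$ yields a sum over closed configurations; because $T$ is a tree, the only permutations contributing are products of transpositions $(u,u')$ with $u,u'$ sharing a common vertex of $V(T)$ together with fixed points, i.e.\ partial matchings of $U(T)$ in the ``line graph'' sense, and a sign/weight bookkeeping (each transposition contributes $-B_{v,u}^2B_{v,u'}^2\cdot(\text{sign }-1)=+1$) shows this equals $m(\emptyset,T)$ exactly --- this is the step where validity of $T$ and $C_4$-freeness are used to rule out longer cycles in the support. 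The main obstacle is clearly the analytic one: getting an honest $o(1)$ error in passing from $\det(P_H\restriction U(T))$ to $d^{-|U(T)|}\det((B^TB)\restriction U(T))$ despite the fact that the naive row-swap bound $(\sqrt\eps+\delta)d$ diverges. The resolution is to never expand around the full $P_H$ with its crude entrywise bounds, but to exploit that $0\preceq P_{H_I}\preceq I$ is a contraction so its principal minors are between $0$ and $1$, run the multilinear expansion entirely inside the good subspace where the per-row $\ell^2$ discrepancy from $\tfrac1dB^TB$ is genuinely $O(\sqrt\eps/d)$, and absorb the bad subspace via a single global $O(\delta^{1/2})$ perturbation bound on the determinant of a contraction --- this is where \eqref{eq:eps_del}, specifically $\eps\to0$ and $\delta\to0$, does the work.
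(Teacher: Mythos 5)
Your proposal correctly identifies the starting point $\P(U(T)\subset\cT)=\det(P_H\restriction U(T))$ and the target combinatorial quantity, and you are right that the naive multilinear expansion of $\det(\tfrac1d M+E)$ fails because swapping in a row of $E$ costs a factor $O((\sqrt\eps+\delta)d)\to\infty$. However, your proposed resolution does not close the gap, and the paper takes a genuinely different route that avoids additive perturbation bounds entirely.

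The core problem with your fix is a matter of scales. You suggest absorbing the bad subspace via a bound of the form $\det(P_H\restriction U(T))=\det(P_{H_I}\restriction U(T))+O(\delta)$ or $O(\delta^{1/2})$, justified by the fact that $P_{H_I}$ is a contraction. But the quantity you are estimating is of order $d^{-|U(T)|}$, which for $|U(T)|\ge 2$ is \emph{much smaller} than $\delta$ (e.g.\ $\delta=d^{-5/4}$ versus $d^{-2}$), so an additive error of that size swamps the main term and gives no useful bound. More generally, any argument that bounds the bad part by an $\ell^2$-norm or operator-norm perturbation of the full determinant is doomed, because the principal minor you care about is polynomially small in $d$ while the perturbation norms are only $\delta$ or $\sqrt\eps$.

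The paper's proof sidesteps this by producing a \emph{multiplicative} rather than additive error. First, using $\lambda_i\ge 1-\sqrt\eps$ on $I$ and positive-semidefiniteness, one upper bounds $P_H\restriction U(T)$ in the Loewner order by $\tfrac{1}{1-\sqrt\eps}\bigl(L^-\restriction U(T)+CC^T\bigr)$, where $C$ collects the bad eigenvectors $\psi_{i,U(T)}$, $i\notin I$; monotonicity of the determinant on PSD matrices then gives an upper bound on the determinant. Second, the generalized matrix determinant lemma \emph{factors} $\det(L^-\restriction U(T)+CC^T)=\det\bigl(\id+C^T(L^-\restriction U(T))^{-1}C\bigr)\cdot\det(L^-\restriction U(T))$, so the correction is a multiplicative factor $\det(\id+\cdots)$, which is then bounded by $\exp\bigl(\Tr(CC^T(L^-\restriction U(T))^{-1})\bigr)$. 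Each entry of $CC^T$ is $\le\delta$ by Cauchy--Schwarz and the non-structured hypothesis, while each entry of $(L^-\restriction U(T))^{-1}$ is $O(d)$ by Cramer's rule and a Cauchy--Binet bound on minors; the trace is thus $O(\delta d)=o(1)$, giving the factor $1+o(1)$. The identity $\det(L^-\restriction U(T))=d^{-|U(T)|}m(\emptyset,T)$ is obtained directly from Cauchy--Binet, $\det(L^-\restriction U(T))=d^{-|U(T)|}\sum_Q\det(B_{Q,U(T)})^2$, noting that validity of $T$ forces every nonvanishing $Q$ to lie in $V(T)$ and to be a transversal, each contributing $1$.

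A further flaw in your proposal is the claimed Leibniz expansion of $\det\bigl((B^TB)\restriction U(T)\bigr)$: it is not true that only transpositions and fixed points contribute. Already for the valid tree $T$ consisting of a root $v_0$ with $\ell$ children in $U(T)$, each having $k$ further children, the entries $M_{u_i,u_j}$ are all $\pm1$ (common neighbor $v_0$), and cycles of every length in the Leibniz expansion contribute nontrivially; the correct value $\det M=k^{\ell-1}(k+\ell)=m(\emptyset,T)$ emerges only after these contributions are summed, which is exactly what Cauchy--Binet organizes cleanly. The treeness and validity hypotheses are used to show that only $Q\subset V(T)$ give nonzero $\det(B_{Q,U(T)})$ and that such determinants lie in $\{-1,0,1\}$; they do not restrict the permutations in the Leibniz sum the way you state.
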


\begin{proof}
For notational convenience we write $S=U(T)$. Recall that $\psi_1,...,\psi_r$ are orthogonal unit  eigenvectors of $L^-$  corresponding to the non-zero eigenvalues $\lambda_1,\ldots, \lambda_r$ and denote by $\psi_{i,S}\in\R^{|S|}$ their restriction to the entries indexed by $S$. We have \begin{align} \label{eq:inclusionMid}
\P(S\subset \cT) =&~ \det\left(\sum_{i=1}^{r}\psi_{i,S}\psi_{i,S}^T\right) \nonumber \\
\le&~ \det\left(\frac{1}{1-\sqrt{\epsilon}}\left(\sum_{i=1}^{r} \lambda_i \psi_{i,S}\psi_{i,S}^T + \sum_{i\notin I}\psi_{i,S}\psi_{i,S}^T\right)\right) \, ,  \end{align}
where we used the fact that $\lambda_i \geq 1-\sqrt{\eps}$ for all $i\in I$ and $\lambda_i \geq 0$ for all $i\not \in I$ together with the inequality $\det(A+B) \geq \det(A)$ whenever $A$ and $B$ are two positive definite matrices which follows by Minkowski's determinant inequality, see \cite[Theorem 7.8.21]{MatrixBook}.
We also have that $\sum_{i=1}^{r}\lambda_i \psi_{i,S}\psi_{i,S}^T=L^-\res S$, hence the Cauchy-Binet formula implies that
\be\label{eq:inclusionMainTerm}
\det \Big (\sum_{i=1}^{r}\lambda_i \psi_{i,S}\psi_{i,S}^T \Big ) = \det(L^- \res S) = d^{-|S|}\sum_{\substack{Q\subset V\\|Q|=|S|}}\det(B_{Q,S})^2 =d^{-|S|} m(\emptyset, T) \, ,
\ee
since $\det(B_{Q,S})\in \{-1,1\}$ if and only if there is a matching in $T$ saturating $Q \cup S$, and otherwise $\det(B_{Q,S})=0$. Such a matching must be counted in $m(\emptyset, T)$ and conversely each matching in $m(\emptyset, T)$ corresponds to such $Q$. Next, since $L^-\res S$ is invertible the generalized matrix determinant lemma \cite[Theorem 18.1.1]{MatrixBook2} states that 
$$ \det(L^-\res S + CC^T) = \det(\id_\ell + C^T (L^-\res S)^{-1} C) \det(L^-\res S) \, ,$$ 
where $C$ is any $|S|\times \ell$ matrix with any $\ell\geq 1$. We apply this with $C$ the $|S|\times |I^c|$ matrix whose columns are the vectors $\psi_{i,S}\in \R^S$ for $i \in I^c$ so that $CC^T =\sum_{i\notin I}\psi_{i,S}\psi_{i,S}^T$. This together with \eqref{eq:inclusionMid} and \eqref{eq:inclusionMainTerm} yields
\be\label{eq:inclusionMid2}
\P(S\subset \cT) \le \left(\frac{1}{1-\sqrt{\epsilon}}\right)^{|S|}d^{-|S|} m(\emptyset, T)\det(\id_{|I^c|}+C^T(L^-\res S)^{-1}C) \, .
\ee
Since $\eps \to 0$, it suffices to show that that the determinant on the right hand side is $1+o(1)$. For any square matrix $Q$ for which all its eigenvalues are real and at least $-1$ we may bound $\det(\id + Q)\leq \exp(\Tr(Q))$ since $\prod_{i} (1+x_i) \leq e^{\sum_i x_i}$ for any numbers $x_i \geq -1$. Hence,
\be\label{eq:detest}\det(\id_{|I^c|}+C^T(L^-\res S)^{-1}C) \leq \exp(\Tr(C^T(L^-\res S)^{-1}C))=\exp(\Tr(CC^T (L^-\res S)^{-1})) \, ,\ee
where $\Tr(AB)=\Tr(BA)$ implies the second inequality. We expand 
\be\label{eq:trace}\Tr(CC^T(L^-\res S)^{-1})=\sum_{u_1,u_2\in S}(CC^T)_{u_1,u_2}((L^-\res S)^{-1})_{u_2,u_1} \, .\ee
We estimate each term in the sum above. Firstly, by Cauchy-Schwarz
\[
|(CC^T)_{u_1,u_2}|=\left|\sum_{i\notin I}\psi_i(u_1)\psi_i(u_2)\right|\le
\sqrt{\sum_{i\notin I} \psi_i^2(u_1) \sum_{i\notin I}\psi_i^2(u_2)}\le \delta
\]
since $u_1,u_2 \in S$ and $S\cap U_{\epsilon,\delta}=\emptyset$. Secondly, let $M_{u_1,u_2}$ be the corresponding minor of $L^-\res S$. By Cauchy-Binet, 
\[|M_{u_1,u_2}|=\Big |\det(L^-\res_{S\setminus\{u_1\},S\setminus\{u_2\}} )\Big | \le \Big ( \frac{k+1}{d} \Big )^{|S|-1}
\]
since there at most $(k+1)^{|S|-1}$ subsets of $V(T)$ of size $|S|-1$ that can be matched with both $S\setminus \{u_1\}$ and $S\setminus \{u_2\}$. Therefore, using Cramer's rule and \eqref{eq:inclusionMainTerm}, we have
\[
|((L^-\res S)^{-1})_{u_2,u_1}|=\frac{|M_{u_1,u_2}|}{\det((L^-\res S)^{-1})}\le 
\frac{(k+1)^{|S|-1}d}{ m(\emptyset, T)} \, .
\]
Putting these two in \eqref{eq:trace} then back into \eqref{eq:detest} yields

\[
\det(\id_{|I^c|}+C^T(L^-\res S)^{-1}C) \leq \exp\left(|S|^2\frac{(k+1)^{|S|-1}\delta d}{ m(\emptyset,T)}\right) = 1+ o(1) \, ,
\]
since $\delta d\to 0$ by \eqref{eq:eps_del} and all other quantities on the right hand side are fixed.     
\end{proof}

\subsection{Exclusion}\label{sec:exclusion}
Recall that given a valid finite rooted tree $(T,o)$ the set $I(T)\subset V(T)$ is the subset of even vertices (i.e., of $V(T))$ of height \emph{strictly} smaller than the height of $T$.
\begin{lemma}\label{lem:exactly}
Let $(T,o)$ be a valid rooted finite tree of height $r \ge 0$ which is a subgraph of $G$ and $o\in V$. Suppose also that $m(I(T),T)>0$. Then
\[
\P(B_\cT (o,r)= (T,o)  \mid U(T) \subset \cT) \le (1+o(1)) \frac{e^{-k |I(T)|}m(I(T),T)}{m(\emptyset, T)}\, ,
\]
where the $o(1)$ depends on $|V(T)|$.
\end{lemma}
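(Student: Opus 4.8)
The plan is to condition further on which edges incident to $I(T)$ are present in $\cT$, using the conditioning formula \eqref{eq:conditioning} to reduce the event $\{B_\cT(o,r)=(T,o)\}$ to a product of determinantal exclusion probabilities. Write $S=U(T)$. Conditioned on $S\subset\cT$, the resulting determinantal measure lives on the subspace $H':=(H\cap[S]^\perp)+[S]$, and $\cT\setminus S$ is distributed as $\P^{H''}$ where $H''=(H\cap[S]^\perp)$ viewed inside $[S]^\perp$ (this is the content of \eqref{eq:conditioning} with $A=S$, $B=\emptyset$, projected onto coordinates outside $S$). The event $B_\cT(o,r)=(T,o)$ says exactly that no vertex $u\in U\setminus S$ lying in $N_G(I(T))$ belongs to $\cT$ — i.e. for each $v\in I(T)$, all the $d-c(v)$ edges at $v$ not already in $T$ are \emph{excluded} (here $c(v)$ is the number of $T$-children of $v$, and $\sum_{v\in I(T)}c(v)=|S|$). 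Let $A_v$ denote this set of edges and $A=\bigcup_{v\in I(T)}A_v$, so $|A|=\sum_{v\in I(T)}(d-c(v))=d|I(T)|-|S|$. Then
\[
\P(B_\cT(o,r)=(T,o)\mid S\subset\cT)=\P^{H''}(A\cap(\cT\setminus S)=\emptyset)=\det\bigl(\id-(P_{H''}\res A)\bigr),
\]
using the standard fact that exclusion probabilities of a determinantal process are given by the determinant of the complementary projection restricted to $A$.

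Next I would compute $P_{H''}$ explicitly. Since $H''=H\cap[S]^\perp$, we have $P_{H''}=P_H-P_{\text{correction}}$, where the correction accounts for the part of $H$ that projects onto the $S$-coordinates; concretely, writing things in block form with respect to the splitting $\R^m=\R^S\oplus\R^{U\setminus S}$, one gets on the $(U\setminus S)\times(U\setminus S)$ block
\[
P_{H''}\res(U\setminus S)=(P_H\res(U\setminus S))-\bigl(P_H\!\restriction_{(U\setminus S)\times S}\bigr)\bigl(P_H\res S\bigr)^{-1}\bigl(P_H\!\restriction_{S\times(U\setminus S)}\bigr),
\]
valid because $P_H\res S$ is invertible (which holds since $\P(S\subset\cT)=\det(P_H\res S)>0$ by the inclusion lemma hypothesis that $m(\emptyset,T)>0$, or directly since $T$ is valid). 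Restricting further to rows and columns in $A$, the first term $P_H\res A$ has entries $\sum_i\psi_i(u_1)\psi_i(u_2)$ which, for $u_1,u_2\in A$, I expect to be $O(1/d)$ on the diagonal (equal to $(k+1)/d$ by the $L^-$ computation as in \eqref{eq:sigmaF}, up to the spectral error from eigenvalues away from $1$) and genuinely small off-diagonal. The key point is that $A$ consists of edges disjoint from $S$ and incident to $I(T)$, so the relevant entries of $P_H$ are controlled by $L^-$ up to the $(\epsilon,\delta)$-structure error, exactly as in the inclusion lemma — here the hypothesis $U(T)\cap U_{\epsilon,\delta}=\emptyset$ from the setup of \cref{lem:inclusion} (and the structure-avoidance from \cref{thm:TisTight}) is what we need, though strictly it is $S$ that avoids $U_{\epsilon,\delta}$, so I will need the analogous bound for vertices in $A$ as well, or to argue it via negative correlation and the global bound $|U_{\epsilon,\delta}|=o(m)$.

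Then the computation is: $\det(\id-P_{H''}\res A)\approx\exp(-\Tr(P_{H''}\res A))$ up to lower order (using $\prod(1-x_i)=e^{\sum\log(1-x_i)}$ and that each $x_i=O(1/d)$ with $|A|=O(d)$, so $\sum x_i^2=O(1/d)\to0$). The trace splits as $\Tr(P_H\res A)$ minus the correction trace. The diagonal term $\Tr(P_H\res A)=\sum_{u\in A}\P(u\in\cT)$ should be, to leading order, $\sum_{v\in I(T)}(d-c(v))\cdot\frac{k+1}{d}\approx (k+1)|I(T)| - \text{(lower order)}$ — but this is the \emph{wrong} leading constant, so the correction term must contribute $-1\cdot|I(T)|$ worth of cancellation to land on $e^{-k|I(T)|}$. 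This is the main obstacle: showing that $\Tr\bigl((P_H\!\restriction_{A\times S})(P_H\res S)^{-1}(P_H\!\restriction_{S\times A})\bigr)=(1+o(1))|I(T)|$, with the ratio $m(I(T),T)/m(\emptyset,T)$ emerging from the determinant $\det(P_H\res S)=(1+o(1))d^{-|S|}m(\emptyset,T)$ (from \eqref{eq:inclusionMainTerm}) combined with the relevant minors. I expect that working over $L^-$ rather than $P_H$ — i.e. replacing $P_H$ by $L^-$ everywhere up to the $(1-\sqrt\epsilon)$-type and $(\epsilon,\delta)$-structure errors, exactly as in \cref{lem:inclusion} — makes these entries and minors purely combinatorial (counting matchings via Cauchy–Binet, as in \eqref{eq:inclusionMainTerm}), so that $\Tr\bigl((L^-\!\restriction_{A\times S})(L^-\res S)^{-1}(L^-\!\restriction_{S\times A})\bigr)$ becomes $d\cdot\bigl(\text{signed count involving }m(\cdot,T)\bigr)$ that telescopes to $|I(T)|$ via the identity $m(I(T),T)=\sum m(\emptyset,T')$-type recursions on which even vertices at maximal height are matched. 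Finally, bundling $e^{-(k+1)|I(T)|}\cdot(\text{correction})\cdot(\text{normalizing }m(\emptyset,T))=e^{-k|I(T)|}m(I(T),T)/m(\emptyset,T)$ and tracking that every error is $o(1)$ in a way depending only on $|V(T)|$ (since all matching counts, $(k+1)^{|V(T)|}$ factors, and $\delta d\to0$, $\epsilon d\to\infty$ bounds are uniform for fixed $T$) completes the proof.
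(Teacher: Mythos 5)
Your approach — conditioning on $U(T)\subset\cT$, identifying the event as the exclusion of the set $A=\bigcup_{v\in I(T)}\big(N_G(v)\setminus U(T)\big)$, and expressing the exclusion probability as $\det(\id-P_{H''}\res A)$ where $P_{H''}$ is the conditional (Schur-complement) kernel — is a genuinely different route from the paper. The paper instead orders the vertices of $A$ lexicographically, applies the chain rule to get a product of conditional inclusion probabilities $\P(u_{i,j}\in\cT\mid\ldots)$, and lower-bounds each factor via a determinantal Nash--Williams inequality (\cref{lem:det_NW}) using $k+1$ explicit vectors in the relevant subspace: $k$ rows of $B$ from the non-$T$ neighbors of $u_{i,j}$, plus one transversal-based vector $\xi_0$ coming from \cref{clm:tree_matching_vector2} whose norm is $(1+o(1))(d\beta_i-j)$ with $\beta_i = m(K_i,T)/(m(K_i,T)-m(K_{i+1},T))$. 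The $1/d$-Riemann sum of $(\beta_i-j/d)^{-1}$ produces $\log(\beta_i/(\beta_i-1))$, and the telescoping $\sum_i\log\bigl(m(K_i,T)/m(K_{i+1},T)\bigr)=\log\bigl(m(\emptyset,T)/m(I(T),T)\bigr)$ is exactly what delivers the matching ratio.

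The gap in your version is the step $\det(\id-M)\approx\exp(-\Tr(M))$ for $M=P_{H''}\res A$. That approximation requires $\Tr(M^2)\to 0$, but here it does not hold: two vertices $u_1,u_2\in A$ adjacent to the same $v\in I(T)$ share a common neighbor, so $(P_{H''})_{u_1,u_2}$ is typically of order $1/d$, and there are $\Theta(d^2)$ such pairs per $v\in I(T)$, giving $\Tr(M^2)=\Theta(1)$. Consequently $\log\det(\id-M)=-\Tr(M)-\tfrac12\Tr(M^2)-\cdots$ has nonvanishing higher-order contributions at constant order, and the trace alone does not determine the answer. Indeed, writing the marginal (unconditioned-on-previous-exclusions) inclusion probability of a neighbor of $v_i$ gives roughly $\tfrac1d(k+\beta_i^{-1})$, so $\Tr(M)\approx k|I(T)|+\sum_i\beta_i^{-1}$, which is \emph{not} equal to $k|I(T)|+\sum_i\log(\beta_i/(\beta_i-1))$; the logarithm arises precisely from the integration in $j$ inside the chain-rule product, and that is exactly the piece your one-shot trace discards. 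You acknowledge this as ``the main obstacle'' and gesture at a recursion involving $m(\cdot,T)$, but without a concrete mechanism — such as the paper's $\beta_i$ vectors and the telescoping log — the $m(I(T),T)/m(\emptyset,T)$ factor cannot be extracted.

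A secondary issue: you worry that you need $A\cap U_{\eps,\delta}=\emptyset$ to control the entries of $P_H$ restricted to $A$. Notice, however, that \cref{lem:exactly} in the paper carries \emph{no} structure hypothesis and its proof never uses one: the Nash--Williams bound and \cref{clm:tree_matching_vector2} are intrinsic to $H$ and the tree $T$, and require no spectral comparison with $L^-$. Your Schur-complement route, by contrast, would need such a comparison over $A$ (which has size $\Theta(d)$), so it would require an extra hypothesis the paper avoids and would likely need a separate argument that most of $A$ avoids $U_{\eps,\delta}$.
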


\begin{lemma}\label{lem:no_matching}
Fix an even integer $r \ge 0$. Let $\cT \subset U(G)$ and $o \in V(G)$ be such that $\P^H(\cT) > 0$ and $B_\cT(o,r)$ is a valid rooted tree $(T,o)$ with $|V(T)| < d$. Then the height of $T$ is $r$ and $m(I(T), T) > 0$.
\end{lemma}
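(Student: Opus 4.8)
The plan is to prove Lemma~\ref{lem:no_matching} by exploiting the support characterization of $\P^H$ discussed after \eqref{eq:defph}: if $\P^H(\cT)>0$ then every vertex of $V$ has a neighbor in $\cT$, i.e.\ $\cT$ is spanning. First I would unpack what it means for $B_\cT(o,r)$ to be a valid rooted tree $(T,o)$: by definition of validity its height is even and every odd-height vertex has degree $k+1$ in $T$. Since $T$ is the ball of radius $r$ in $G[\cT]$ around $o$, and every $u\in U$ has degree exactly $k+1$ in $G$, a vertex $u\in U(T)$ at height $<r$ must already see all $k+1$ of its $G$-neighbors inside $T$; the validity hypothesis is exactly the additional statement that this also holds at height $r$ (if $r$ were odd there would be odd vertices at the boundary with degree $1$, contradicting validity — which is why $r$ is assumed even and why the conclusion ``height of $T$ is $r$'' needs an argument rather than being automatic).

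\textbf{Step 1: the height of $T$ equals $r$.} Suppose for contradiction the height of $T$ is some even $r' < r$. Let $v$ be a vertex of $V(T)$ at maximal height $r'$ (it is in $V$ since $r'$ is even). In $G$, $v$ has $d$ neighbors in $U$; since $\cT$ is spanning... wait, spanning only forces $v$ to have \emph{some} neighbor in $\cT$. The real point is different: I would instead argue that $v$ is a leaf of $T$ at height $r'<r$, so $v$ has no $\cT$-neighbor at height $r'+1\le r$; but $B_\cT(o,r)$ includes \emph{all} vertices within distance $r$, so every neighbor $u$ of $v$ lying in $\cT$ is within distance $r'+1\le r$ of $o$ and hence belongs to $T$ — contradicting that $v$ is a leaf, \emph{unless} $v$ has no neighbor in $\cT$ at all, which contradicts that $\cT$ is spanning. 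Hence no such $v$ exists and the height is exactly $r$. The hypothesis $|V(T)|<d$ is not needed here but will be used next; I should double-check whether it is actually needed for Step~1 (I believe not).

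\textbf{Step 2: $m(I(T),T)>0$.} Recall $m(I(T),T)$ counts matchings of $T$ saturating $I(T)\cup U(T)$, where $I(T)$ is the set of even vertices of height strictly less than $r$. Equivalently, viewing $B\res_{V(T),U(T)}$ as the bipartite incidence-type matrix restricted to $T$, a matching saturating $I(T)\cup U(T)$ corresponds (up to signs, which are $\pm1$ and irrelevant to nonvanishing) to choosing, for a square submatrix, a nonzero generalized diagonal; concretely it corresponds to an injection $U(T)\hookrightarrow V(T)$ covering $I(T)$. I would produce such an injection directly from the fact that the columns of $B$ indexed by $\cT$ are a basis of the column space of $B$. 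Restricting to $T$: the columns indexed by $U(T)$ are linearly independent in $\R^{V}$, and each such column is supported on $V(T)$ (by construction of $T$ as an induced subgraph, a $u\in U(T)$ at height $\le r-1$ has all neighbors in $V(T)$; a $u$ at height $r-1$... here $r$ even so $U(T)$ vertices have height odd, $\le r-1$, hence all their neighbors are at height $\le r$ and in $V(T)$). Therefore $B\res_{V(T),U(T)}$ has full column rank $|U(T)|$, so by Cauchy--Binet some $|U(T)|\times|U(T)|$ minor $\det(B_{Q,U(T)})$ is nonzero, i.e.\ there is a matching saturating $Q\cup U(T)$ with $Q\subset V(T)$, $|Q|=|U(T)|$. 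This already gives $m(\emptyset,T)>0$. To upgrade to $m(I(T),T)>0$ I must show $Q$ can be taken to contain $I(T)$. The counting identity $|V(T)| = |I(T)| + (\text{number of even vertices at height } r)$ and the tree structure (every non-leaf even vertex at height $<r$ has children, every odd vertex has $k+1$ neighbors hence $k$ children) gives $|U(T)| - |I(T)| = $ (number of leaves of $T$ at height $r$)/... I would set up the vertex count: height-$r$ even vertices number exactly $k\cdot(\#\text{height-}(r-1)\text{ odd vertices})$, and relate this to force $|Q\setminus I(T)|$ to land exactly on those boundary vertices; more robustly, I would argue combinatorially that \emph{any} matching saturating $U(T)$ in a valid tree of height exactly $r$ automatically saturates $I(T)$ (a leaf of $T$ at the bottom, being even of height $r$, can be left unsaturated, but an even vertex of height $<r$ has all its children being odd vertices that must be matched, and a parity/counting argument on each subtree shows the only consistent way is for each such vertex to be matched to one of its children), hence $m(\emptyset,T) = m(I(T),T)$ in this situation and we are done. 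Here is where $|V(T)|<d$ enters: it is implicitly used via Lemma~\ref{lem:inclusion}'s regime, but in this lemma I suspect it is a red herring or used only to ensure $T$ embeds sensibly; I would state the combinatorial claim cleanly and verify it does not need it.

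\textbf{Main obstacle.} The delicate part is Step~2, specifically proving that a matching saturating $U(T)$ in a valid height-exactly-$r$ tree necessarily saturates $I(T)$ — equivalently $m(\emptyset,T)=m(I(T),T)$ in this regime. The risk is an off-by-one in the parity/counting bookkeeping near the leaves, and correctly handling even vertices of height $<r$ that happen to be leaves of $T$ (these would violate ``height exactly $r$'' reachability, so Step~1's conclusion must be invoked to rule them out: every even vertex of height $<r$ has at least one child because otherwise $T$'s height computation or the induced-subgraph property fails). I would carry out Step~1 carefully first precisely so that in Step~2 I may assume every even vertex of height $<r$ has $\ge 1$ child, which makes the saturation argument a clean induction on height from the bottom up.
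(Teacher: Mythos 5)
Your proposal takes a genuinely different route from the paper, and both of your two steps contain a real gap.

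\textbf{Gap in Step 1.} You argue that if $T$ has height $r'<r$, a height-$r'$ leaf $v\in V(T)$ cannot exist because every $\cT$-neighbor of $v$ would land inside $B_\cT(o,r)=T$, ``contradicting that $v$ is a leaf, unless $v$ has no neighbor in $\cT$ at all.'' This last disjunct is wrong: the parent of $v$ (which lies in $\cT$) is a $\cT$-neighbor of $v$, and its presence in $T$ is perfectly consistent with $v$ being a leaf. What you actually rule out is that $v$ has any $\cT$-neighbor \emph{other than its parent}, and the spanning property ($\P^H(\cT)>0\Rightarrow\cT$ spanning) gives no contradiction to $v$'s only $\cT$-neighbor being its parent. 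The genuine obstruction lives one level deeper: $\P^H(\cT)>0$ says that the columns of $B$ indexed by $\cT$ are a \emph{basis} of the column space of $B$, which is much stronger than $\cT$ being spanning; your Step 1 never uses it.

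\textbf{Gap in Step 2.} The combinatorial upgrade you propose --- that in a valid tree of height exactly $r$, any matching saturating $U(T)$ automatically saturates $I(T)$, i.e.\ $m(\emptyset,T)=m(I(T),T)$ --- is false. Take $k=1$, $r=2$, $T$ a root $o$ with two height-$1$ children $u_1,u_2$, each having one height-$2$ child $v_1,v_2$. Here $I(T)=\{o\}$ and the matching $\{(v_1,u_1),(v_2,u_2)\}$ saturates $U(T)$ but misses $o$, so $m(\emptyset,T)=3>2=m(I(T),T)$. Your ``parity/counting argument on each subtree'' does not exist: nothing forces an even vertex of height $<r$ to be matched once all its children are matched elsewhere. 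So you cannot pass from ``some transversal exists'' to ``some transversal contains $I(T)$'' combinatorially, and the conclusion of the lemma once again requires the full linear-algebraic content of $\P^H(\cT)>0$.

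\textbf{What the paper does.} The paper's proof is a single unified argument handling both failure modes simultaneously. Let $K$ be the set of even vertices of $T$ of height $<r$. If the height is $r'<r$ then $K=V(T)$, and $m(K,T)=0$ because $|V(T)|=k|U(T)|+1>|U(T)|$; if the height is $r$ and $m(I(T),T)=0$ then $K=I(T)$ and $m(K,T)=0$ by assumption. In either case Claim~\ref{clm:transversal0} produces a non-zero $\varphi\in\R^{V(T)}$ supported on $K$ with $\varphi^T D=0$ ($D$ the $V(T)\times U(T)$ block of $B$). Padding $\varphi$ by zeros to $\tilde\varphi\in\R^V$, one checks that $(\tilde\varphi^T B)(u)=0$ for every $u\in\cT$ (using $B_\cT(o,r)\cong T$ to see that any $u\in\cT$ with a $K$-neighbor lies in $U(T)$), while the hypotheses $|V(T)|<d$ and $C_4$-freeness guarantee some $u\in U(G)$ with $(\tilde\varphi^T B)(u)\neq 0$. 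Thus $\tilde\varphi^T B$ is a non-zero vector of $H$ vanishing on $\cT$, forcing $\P^H(\cT)=0$, a contradiction. This is precisely where $|V(T)|<d$ is used, so it is not a red herring.
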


These two lemmas, together with \cref{lem:inclusion} give a us an explicit upper bound for the probability $\P(B_\cT (o,r)= (T,o))$ for any fixed valid finite rooted tree $(T,o)$ that is a subgraph of $G$. This brings us very close to \cref{thm:mainTree}. For the proofs of the two lemmas we require some preparations. 

\subsubsection{Transversals in valid trees}
Let $(T,o)$ be a valid finite rooted tree that is a subgraph of $G$ and write $D$ for the corresponding $V(T)\times U(T)$ submatrix of $B$. A subset $M\subset V(T)$ with  $|M|=|U(T)|$ is called a {\bf transversal} if there is a matching in $T$ that saturates $M\cup U(T)$. Since $T$ is a tree, every transversal $M$ has a unique matching saturating it, and therefore, we may identify a matching saturating $U(T)$ with its corresponding transversal. Hence,  $\det(D\res M)=\pm1$ if $M$ is a transversal and $0$ otherwise, where as usual for $K\subset V(T)$ we denote by $D\res K$ the $K\times U(T)$ submatrix of $D$. Cauchy-Binet formula now implies that  the number of transversals in $T$ is given by $\det(D^TD)$ and that the random subset $\cM$ of $V(T)$ sampled by the determinantal measure associated with the  image of $D$ is a {\em uniformly} drawn transversal.

Recall that for a subset $K\subset V(T)$ we write $m(K,T)$ for the number of matchings in $T$ in which $K\cup U(T)$ is saturated, or equivalently, the number of transversals containing $K$. 

\begin{claim}\label{clm:tree_matching_vector1} If $K\subset V(T)$ such $m(K,T)>0$ and $v_0 \in V(T)\setminus K$, then 
\[
m(K\cup\{v_0\},T)<m(K,T)\, .
\]
\end{claim}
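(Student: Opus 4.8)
The claim is purely combinatorial: in a tree $T$ (viewed with its bipartition $V(T)\uplus U(T)$), adding one more vertex $v_0$ to the set of ``forced-saturated'' even vertices $K$ strictly decreases the number of matchings saturating $K\cup U(T)$. The plan is to exhibit an \emph{injection} from the set of transversals containing $K\cup\{v_0\}$ into the set of transversals containing $K$ but \emph{not} containing $v_0$, which immediately gives $m(K\cup\{v_0\},T) \le m(K,T) - 1 < m(K,T)$ once we also produce at least one transversal containing $K$ but missing $v_0$ (the latter existing because, as will follow from the argument, the transversals containing $K$ and the ones containing $K\cup\{v_0\}$ are not all of them).

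\textbf{Key steps.} First I would fix a transversal $M \supseteq K \cup \{v_0\}$ and its unique saturating matching $\mu_M$ in $T$. The matching $\mu_M$ pairs $v_0$ with some odd neighbor $u_0 \in U(T)$. Now consider the alternating structure: since $|M| = |U(T)|$ and $M$ saturates, removing the edge $\{v_0,u_0\}$ leaves $u_0$ unsaturated and the forest still contains, rooted at $u_0$, some structure allowing an alternating path. More precisely, I would use the standard fact that in a tree, if $M$ and $M'$ are two transversals (equivalently two matchings saturating $U(T)$), their symmetric difference of edges is a union of alternating paths each of whose endpoints lie in $V(T)$. The cleanest route: root $T$ at $o$; a transversal is determined by choosing, for each $u \in U(T)$, which of its neighbors it is matched to, subject to the acyclicity/consistency of a matching. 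For each $u$, exactly one of its neighbors is matched to it, and in a tree this is equivalent to orienting each $u$ toward one incident even vertex so that every even vertex has in-degree $\le 1$. Then ``$v_0$ is saturated'' means some $u$ points at $v_0$. I would build the injection by a ``toggle along the path to $o$'' operation: given a matching with $v_0$ saturated, follow the unique path from $v_0$ up to the root and flip the matching along the first maximal alternating segment, producing a matching that still saturates everything in $K$ (since flipping an alternating path preserves saturation of all internal vertices and we choose the segment so as not to desaturate any vertex of $K$) but now leaves $v_0$ unsaturated. One must check this map is well-defined and injective — it is, because the flipped segment is recoverable from the image.

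\textbf{Alternative cleaner approach.} It may be smoother to argue by the determinantal/probabilistic interpretation already set up in the paragraph preceding the claim: $m(K,T)/\det(D^T D)$ is exactly the probability that a uniformly random transversal $\cM$ contains $K$, and $m(K\cup\{v_0\},T)/\det(D^T D) = \P(K \subseteq \cM,\ v_0 \in \cM)$. So the claim is equivalent to $\P(v_0 \in \cM \mid K \subseteq \cM) < 1$, i.e., that conditioned on $K \subseteq \cM$ there is still positive probability that $v_0 \notin \cM$. Since $m(K,T) > 0$ there is a transversal $M \supseteq K$; if $v_0 \in M$ I apply the path-flip above to get a transversal $M' \supseteq K$ with $v_0 \notin M'$, and then both $M$ (or $M'$) and the other witness show the conditional probability is strictly between $0$ and $1$. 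Either way the combinatorial heart is the same lemma about flipping alternating paths in a tree.

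\textbf{Main obstacle.} The delicate point is ensuring the alternating-path flip does \emph{not} desaturate any vertex of $K$ — flipping along an alternating path toggles the matched/unmatched status only of the two \emph{endpoints} of the path (internal vertices stay saturated), so I need the flip-path from $v_0$ to have its other endpoint be a vertex \emph{not} in $K$ (it can be a leaf even vertex outside $K$, or, in the worst case, I need to argue such an endpoint exists). This is where the hypothesis structure of a \emph{valid} tree (odd vertices have degree $k+1 \ge 2$, height even) and $m(K,T)>0$ should be used: starting an alternating path at $v_0$ and extending greedily, it cannot terminate at an odd vertex (odd vertices are all saturated in every transversal, so the path can always be extended through them), hence it terminates at an even vertex, and by a counting/parity argument that even terminal vertex can be taken outside $K$ — intuitively because the path, once it leaves the ``$K$-forced'' region, reaches a free even vertex. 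Making this termination argument airtight is the one step I would spend real care on; everything else is routine.
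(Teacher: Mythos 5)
Your core idea -- desaturate $v_0$ by flipping an $M$-alternating path in $T$ whose far endpoint is an even vertex outside $K$ -- is exactly the idea behind the paper's proof, and your ``alternative cleaner approach'' correctly observes that a single such flip suffices: you do not need an injection, only one witness matching that saturates $K$ but not $v_0$. However, the specific construction you sketch has a real defect, and the difficulty you flag at the end is not actually there.

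The ``toggle along the path to $o$'' operation is not well-defined as stated: the unique tree-path from $v_0$ to the root has no reason to be $M$-alternating (for instance, $v_0$'s match $u_0$ need not be $v_0$'s parent), so ``the first maximal alternating segment along it'' may be empty, may start with a non-matching edge, and may terminate at a vertex of $K$. The paper instead builds the alternating walk greedily, independent of the rooting: set $u_0 := M(v_0)$; since $\deg u_0 = k+1 \ge 2$, pick any neighbor $v_1 \ne v_0$; if $v_1 \notin K$, stop; otherwise $v_1$ is $M$-saturated, set $u_1 := M(v_1)$ (note $u_1 \ne u_0$ since $u_0$ is matched to $v_0$), and repeat. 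At every stage the walk can be extended (odd vertices are never leaves; even vertices in $K$ are saturated), and it is non-backtracking, hence a simple path in the tree $T$; since $T$ is finite it must stop, and by construction the only place it can stop is at an even vertex $v_{\ell+1} \notin K$. Swapping each $\{v_i,u_i\}$ for $\{v_{i+1},u_i\}$ yields a matching saturating $K \cup U(T)$ but not $v_0$, which gives the strict inequality. So the ``counting/parity argument'' you were worried about is unnecessary: finiteness of $T$ plus simplicity of the non-backtracking walk already forces termination at the right kind of vertex. Also, a small point: the reason you can pass through an odd vertex $u_i$ is that $\deg u_i \ge 2$ (you arrive along the matching edge and leave along a non-matching one), not directly that $u_i$ is saturated; saturation is what you need at the even vertices $v_i \in K$.
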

\begin{proof}
If there is no matching saturating $K\cup\{v_0\}$, then the claim is trivial. Otherwise, given a matching $M$ in which the vertices $K\cup\{v_0\}$ are saturated, construct the following non-backtracking path in $T$ denoted $v_0,u_0,....,v_\ell,u_\ell,v_{\ell+1}$ for some $\ell \geq 0$ as follows. Start with $v_0$ and let $u_0\in U(T)$ be its unique neighbor in $M$. Next choose $v_1$ to be an arbitrary neighbor of $u_0$ in $T$ that is not $v_0$. If $v_1 \not \in K$, terminate. Otherwise, let $u_1$ be the unique neighbor of $v_1$ in $M$. Continue similarly until terminating. Since no vertex of $U(T)$ is a leaf of $T$ and since $T$ is a tree, the process must terminate at vertex $v_{\ell+1} \in V(T)\setminus (K \cup \{v_0\})$. 

Now, for each $0 \leq i \leq \ell$ replace the edge $\{v_i,u_i\}$ of $M$ with $\{v_{i+1},u_i\}$. This yields a matching that saturates $K$ but not $v_0$ and the claim follows.
\end{proof}

\begin{claim}\label{clm:tree_matching_vector2}
Let $V(T)=K\uplus J$ be a partition of $V(T)$ such that $m(K,T)>0$ and let $v_0\in J$. Then there exists a  vector $\varphi \in \R^{V(T)}$ such that 
\begin{enumerate}[(i)]
    \item $\varphi^T \cdot D=0$, and,
    \item $\varphi(v_0)=1$, and,
    \item $\sum_{v\in J} \varphi(v)^2 = \frac{m(K,T)}{m(K,T)-m(K\cup\{v_0\},T)}$. 
\end{enumerate}  
\end{claim}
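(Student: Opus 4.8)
The plan is to construct the vector $\varphi$ explicitly as a linear combination of indicator functions of transversals, exploiting the fact that the kernel of $D^T$ (acting on the left) has a basis naturally indexed by cycles in the bipartite graph $T$ — but since $T$ is a tree there are no cycles, so instead the left-kernel of $D$ is spanned by vectors supported on paths between pairs of ``unsaturated'' vertices. Concretely, fix a transversal $M_0$ containing $K$ (which exists since $m(K,T)>0$; if additionally $m(K\cup\{v_0\},T)=m(K,T)$ then by \cref{clm:tree_matching_vector1} this is impossible unless there is literally no room, but we should handle the boundary: if $m(K\cup\{v_0\},T)=0$ the right-hand side of (iii) is $1$ and we want $\sum_J\varphi(v)^2=1$). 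For $v_0\in J$ unsaturated by $M_0$, there is a unique non-backtracking alternating path $P$ in $T$ from $v_0$ to another vertex $v_1\notin M_0$ (as in the proof of \cref{clm:tree_matching_vector1}); define $\varphi$ to be supported on the even vertices $V(T)\cap P$ with alternating signs $\pm1$ determined by the signs of $D$ along $P$, normalized so $\varphi(v_0)=1$. Then $\varphi^TD=0$ because each odd vertex $u\in P$ has exactly its two $P$-neighbors contributing, with signs arranged to cancel, while odd vertices off $P$ receive no contribution. This gives (i) and (ii).

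For (iii), I would interpret the quantity $\sum_{v\in J}\varphi(v)^2$ combinatorially. The key observation is that the determinantal measure $\cM$ associated with $\ima(D)$ is a uniform random transversal (as established in the paragraph preceding the claim), and the projection $P_{\ima D}$ restricted to coordinate $v_0$ gives $\P(v_0\in\cM)=m(\{v_0\},T)/\det(D^TD)$; more relevantly, after conditioning the determinantal process on $K\subset\cM$ — which by \eqref{eq:conditioning} corresponds to an explicit subspace — the probability that $v_0$ is also in the conditioned process is $m(K\cup\{v_0\},T)/m(K,T)$. The vector $\varphi$ I constructed should be recognized, up to scaling, as (a multiple of) the orthogonal projection of the standard basis vector $e_{v_0}$ onto the left-kernel of $D$, equivalently onto $(\ima D)^\perp$; then $\|e_{v_0}-(\text{its projection onto }\ima D)\|^2$ equals the $v_0$-diagonal entry of $\id-P_{\ima D}$, which is $1-m(\{v_0\},T)/(\text{number of transversals})$. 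To get the $K$-conditioned version I would instead work with the subspace from \eqref{eq:conditioning} for conditioning on $K\subset\cM$, and the restriction to $J$-coordinates accounts for the fact that $\varphi$ automatically vanishes on $K$ (since the alternating path from $v_0$ to $v_1$ only passes through even vertices, and the construction can be arranged so the path stays within $J$, or more robustly, the projection onto the relevant conditioned subspace kills $K$-coordinates by design).

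The main obstacle I expect is pinning down the precise normalization in (iii) and showing that $\varphi$ can be taken supported on $J$ (not touching $K$), which is where the hypothesis $m(K,T)>0$ and the structure of the conditioned determinantal measure must be used carefully. The cleanest route is probably: let $W=\ker(D^T)\cap\{x:x(v)=0\ \forall v\in K\}$ — wait, that is not obviously the right space; instead let $W$ be the orthogonal complement inside $\R^{V(T)}$ of $\ima(D)+\mathrm{span}\{e_v:v\in K\}$, i.e. $W=(\ima D)^\perp\cap[K]^\perp$ in the excerpt's notation (with $[K]$ the coordinate subspace on $K$). Then $W$ consists exactly of left-kernel vectors of $D$ vanishing on $K$. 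One checks $\dim W = |J| - m(K,T) + (\text{something})$... actually the relevant identity is that $\P(v_0\in\cM\mid K\subset\cM)$ equals the $v_0$-diagonal entry of $P_{[\ima D]+[K]}$ restricted appropriately, so $\|\Pi_W e_{v_0}\|^2 = 1 - \P(v_0\in\cM\mid K\subset\cM) = 1 - m(K\cup\{v_0\},T)/m(K,T) = (m(K,T)-m(K\cup\{v_0\},T))/m(K,T)$. Setting $\varphi = \Pi_W e_{v_0}/\|\Pi_W e_{v_0}\|^2$ gives $\varphi(v_0) = \langle e_{v_0},\varphi\rangle = \|\Pi_W e_{v_0}\|^2/\|\Pi_We_{v_0}\|^2 = 1$ (using $\langle e_{v_0},\Pi_W e_{v_0}\rangle = \|\Pi_W e_{v_0}\|^2$), $\varphi$ vanishes on $K$ and lies in the left-kernel of $D$ (giving (i),(ii)), and $\sum_{v\in J}\varphi(v)^2 = \|\varphi\|^2 = \|\Pi_W e_{v_0}\|^2/\|\Pi_We_{v_0}\|^4 = 1/\|\Pi_W e_{v_0}\|^2 = m(K,T)/(m(K,T)-m(K\cup\{v_0\},T))$, which is exactly (iii). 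So the real content is the determinantal-measure computation of $\|\Pi_W e_{v_0}\|^2$, which follows from \eqref{eq:PHmarginal} and \eqref{eq:conditioning} applied to the finite ground set $V(T)$ with the determinantal measure of $\ima(D)$, together with the transversal interpretation already recorded in the excerpt; the alternating-path picture then serves as a concrete sanity check that $\Pi_W e_{v_0}\neq 0$ whenever $v_0$ is not forced into every transversal.
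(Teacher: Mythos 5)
Your projection approach is morally on target, but you use the wrong subspace, and this is a genuine error rather than a notational slip. You take $W = (\ima D)^\perp\cap[K]^\perp = \ker(D^T)\cap[K]^\perp$, which forces $\varphi$ to vanish on $K$ in addition to satisfying (i). But the claim does not ask for $\varphi\restriction K=0$, and the desired $\varphi$ generally has nonzero $K$-coordinates. The space the paper actually uses is $W = (\ker D^T + \R^K)\cap[K]^\perp$, the image of $\ker D^T$ under the coordinate projection onto $[K]^\perp$ --- strictly larger than yours in general. Dually, your $W^\perp = \ima D + \R^K$, whereas \eqref{eq:conditioning} with $A=K$, $B=\emptyset$ gives the conditioning subspace $(\ima D\cap[K]^\perp)+\R^K$; these differ whenever $\ima D$ is not invariant under $P_{[K]^\perp}$, so your key identity $\|\Pi_W e_{v_0}\|^2 = 1-\P(v_0\in\cM\mid K\subset\cM)$ does not actually follow from \eqref{eq:conditioning} and is false in general.

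A minimal example where this breaks: let $T$ be the path $1 - u - 2$, so $V(T)=\{1,2\}$, $U(T)=\{u\}$, $D=(1,1)^T$, and take $K=\{1\}$, $v_0=2$. Then $m(K,T)=1$, $m(K\cup\{v_0\},T)=0$, and the target in (iii) is $1$. The unique $\varphi$ with $\varphi^T D=0$ and $\varphi(v_0)=1$ is $\varphi=(-1,1)$, which indeed gives $\sum_{v\in J}\varphi(v)^2=1$, but $\varphi(1)=-1\neq 0$. Your $W=\ker D^T\cap[K]^\perp=\{0\}$, so $\Pi_W e_{v_0}=0$ and the formula $\varphi=\Pi_W e_{v_0}/\|\Pi_W e_{v_0}\|^2$ divides by zero. (Your alternating-path vector also cannot deliver (iii) in general: it is $\pm1$-valued, hence has integer squared $J$-norm, while the target is a general rational.) The fix is the paper's: note that $\sum_{v\in J}\varphi(v)^2=\|P_{[K]^\perp}\varphi\|^2$ and minimize this over all $\varphi$ satisfying (i) and (ii) --- equivalently, minimize $\|w\|^2$ over the affine slice $\{w\in W : w(v_0)=1\}$ for the \emph{larger} $W$ above --- then recover $\varphi\in\ker D^T$ from the minimizer $w=P_W e_{v_0}/\|P_W e_{v_0}\|^2$ by writing $w=P_{[K]^\perp}\varphi$.
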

\begin{proof}
We first claim that there exists a vector $\varphi$ satisfying (i) and (ii). Indeed, otherwise the row of $v_0$ in $D$ is {\emph not} spanned by the all the other rows. Since $\det(D\res M)=\pm1$ if $M$ is a transversal and $0$ otherwise, it follows the rows of $D$ corresponding to vertices of $V(T)$ saturated by $M$ are a basis of the row space of $D$; hence $v_0 \in \cM$ with probability $1$, contradicting \cref{clm:tree_matching_vector1}. 

Next, denote by $P_{(\R^K)^\perp}$ the orthogonal projection from $\R^{V(T)}$ onto $(\R^K)^\perp$, in other words, the operator zeroing the coordinates of $K$ and leaving the coordinates of $J$ unaltered. 
{
    By definition, the image under $P_{(\R^K)^\perp}$ of the subspace of vectors $\varphi$ satisfying condition (i) is precisely
    \[
    W:=(\ker D^T + \R^K)\cap(\R^K)^\perp\,.
    \]
    Moreover, if $w=P_{(\R^K)^\perp}\varphi$, then, since $v_0\in J$, we have $\varphi(v_0)=w(v_0)$ and
    \[
    \sum_{v\in J}\varphi(v)^2=\|w\|^2\,.
    \]
We  produce the desired vector $\varphi$ as a preimage of $w = P_W e_{v_0}/\|P_W e_{v_0}\|^2$ under $P_{(\R^K)^\perp}$.
Here, as usual, $P_W$ is the orthogonal projection onto $W$ and $e_{v_0}$ is the vector taking $1$ at $v_0$ and $0$ otherwise. 
In fact, this $w$ minimizes $\|w\|^2$ over the affine slice $W \cap \{ w\in \R^{V(T)} : w(v_0)=1\}$. Indeed, by letting $\alpha:=\|P_We_{v_0}\|^2$ we find that for $w$ in the affine slice 
    \[
    1-\alpha = \|e_{v_0}-P_W e_{v_0}\|^2 \le \|e_{v_0}-\alpha w\|^2=1-2\alpha+\alpha^2\|w\|^2\,,
    \]
whence $\|w\|^2\ge 1/\alpha$. To compute the value of $\alpha$ we observe that $$W^\perp= (\mathrm{im}(D)\cap(\R^K)^\perp)+\R^K$$ is the subspace of $\R^{V(T)}$ that induces the determinantal probability measure of $\cM$ conditioned on the event $K\subset \cM$, by \eqref{eq:conditioning}. Therefore,
\[
\alpha= 1-\|P^{W^\perp}e_{v_0}\|^2 =1-\P(v_0\in\cM\mid K\subset\cM) = 1-\frac{m(K\cup\{v_0\},T)}{m(K,T)}\,.
\]
In conclusion, if $\varphi \in \R^{V(T)}$  satisfies (i),(ii) and minimizes $\sum_{v\in J} \varphi(v)^2$ then
\[
\sum_{v\in J}\varphi(v)^2 = \frac{1}{\alpha} = \frac{m(K,T)}{m(K,T)-m(K\cup\{v_0\},T)} \, ,\]
as required.}
\end{proof}

The following easy claim deals with the case that $m(K,T)=0$.
\begin{claim} \label{clm:transversal0}
    Suppose that $m(K,T)=0$. Then, there exists a non-zero vector $\varphi \in \R^{V(T)}$ supported on $K$ such that $\varphi^T D=0$.
\end{claim}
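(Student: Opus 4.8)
The claim asserts that if no matching in $T$ saturates $K \cup U(T)$, then the rows of $D$ indexed by $K$ are linearly dependent, and moreover a dependency can be found using \emph{only} those rows. The plan is to reduce this to a combinatorial fact about bipartite matchings via Hall's theorem. Let $D \restriction K$ denote the $K \times U(T)$ submatrix. Since $D$ records the incidence (with signs) of $T$ between $K$ and $U(T)$, the nonzero pattern of $D \restriction K$ is exactly the bipartite adjacency matrix of the subgraph of $T$ on $K \cup U(T)$. I would first observe that $m(K,T) = 0$ is equivalent to saying there is \emph{no} matching in this bipartite subgraph saturating $U(T)$: indeed any matching saturating $U(T)$ automatically covers $|U(T)|$ vertices of $K$, and since $T$ is a forest such a matching is unique once the saturated set of $K$-vertices is chosen, so its existence is precisely $m(K,T)>0$. (Here I use that every $u \in U(T)$ has degree $k+1 \ge 2$ in $T$ hence is non-isolated, and validity of $T$.)

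Next, by Hall's theorem applied to the bipartite graph with parts $U(T)$ and $K$, the failure of a $U(T)$-saturating matching means there is a set $S \subseteq U(T)$ whose neighborhood $N_T(S) \cap K$ has size strictly less than $|S|$. The plan is then to produce $\varphi$ supported on $K' := N_T(S) \cap K$. Consider the $K' \times S$ submatrix $D \restriction_{K', S}$ of $D$: it has strictly more columns than rows, $|S| > |K'|$, so its rows span a space of dimension at most $|K'| < |S|$, equivalently its columns are linearly dependent; but what I actually want is a left null vector of a \emph{tall} piece, so instead I consider the $S \times K'$ transpose picture — hmm, let me restate cleanly. Take $D_1 := $ the submatrix of $D$ with rows $K'$ and \emph{all} columns $U(T)$. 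Every column indexed by $u \notin S$ might be nonzero on $K'$, so this is not immediately helpful; the correct move is: the columns of $D$ indexed by $S$ are supported (within the rows $K$) entirely on $K'$, so the submatrix $D \restriction_{K, S}$ has all its nonzero entries inside the $|K'|$ rows of $K'$. Thus the $|S|$ columns of $D \restriction_{K,S}$ live in the $|K'|$-dimensional coordinate subspace $\R^{K'} \subseteq \R^K$, and since $|S| > |K'|$ they are linearly dependent: there is $0 \ne c \in \R^S$ with $\sum_{u \in S} c_u (D\restriction_K)_{\cdot,u} = 0$. That is a dependency among \emph{columns}, not rows — which gives a vector in $\ker D^T$? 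No: $\sum_u c_u D_{v,u} = 0$ for all $v \in K$ says $c$ (extended by zero on $U(T)\setminus S$) lies in the kernel of $D \restriction_K$ acting on $\R^{U(T)}$, i.e. $D\restriction_K \, c = 0$, which is about the \emph{right} kernel. I want a left kernel vector $\varphi$ with $\varphi^T D = 0$ supported on $K$.

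So let me fix the argument: the right object is to find $\varphi \in \R^K$ (extended by zero to $\R^{V(T)}$) with $\varphi^T D = 0$, equivalently $\varphi$ is orthogonal to every column of $D$. Equivalently, the rows of $D$ indexed by $K$ are linearly dependent. To see this, suppose for contradiction they are independent; then $\rank(D\restriction_K) = |K|$, so $D \restriction_K$ has $|K|$ linearly independent rows, hence (taking transposes) $|K|$ independent columns among the $|U(T)|$ columns — but more to the point, independence of all rows in a $|K| \times |U(T)|$ matrix forces the existence of a $|K| \times |K|$ nonsingular submatrix, i.e. a set $K \subseteq$ — wait, we need $|K| \le |U(T)|$. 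In our setting $|K| \le |V(T)|$ need not be $\le |U(T)|$. The cleaner route: it suffices to show $\rank(D \restriction_K) < |K|$, and this is where I invoke the combinatorial input directly. By König's theorem / the Hall deficiency version, the maximum matching in the bipartite subgraph $(K, U(T), E_T)$ has size $\nu$, and $m(K,T) = 0$ forces $\nu < |K|$ \emph{provided} every maximum matching that saturates $U(T)$... actually $m(K,T)=0$ says no matching saturates all of $U(T)$ \emph{and} all of $K$ simultaneously; since $|U(T)| \le |K|$ always holds here (because in a valid tree each odd vertex has $\ge 2$ even neighbors and $T$ is a tree — one can check $|V(T)| > |U(T)|$, but $|K|$ could be small). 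I will restrict attention to the honest statement $m(K,T) = 0 \iff$ no matching saturates $K \cup U(T)$, and handle it by: if $|K| > |U(T)|$ this is automatic (a matching has $\le |U(T)|$ edges, can't saturate $K$), and then the rows of $D\restriction_K$ — being $|K| > |U(T)|$ many vectors in $\R^{U(T)}$ — are trivially dependent, done. If $|K| \le |U(T)|$, then $m(K,T)=0$ means the bipartite graph $(K,U(T))$ has no $K$-saturating matching, so by Hall there is $K' \subseteq K$ with $|N_T(K') \cap U(T)| < |K'|$; now the rows of $D$ indexed by $K'$ are supported on the columns $N_T(K') \cap U(T)$, giving $|K'|$ vectors in a space of dimension $< |K'|$, hence a nonzero $\varphi \in \R^{K'} \subseteq \R^{V(T)}$ with $\varphi^T D = 0$.

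\textbf{Main obstacle.} The genuine content is the two-case split and getting Hall's theorem pointed in the right direction (deficiency on the $K$-side, supported on the small set $K'$, yielding a \emph{left} null vector); the linear-algebra conclusion is then immediate from a pigeonhole/rank count. I expect the only subtlety is bookkeeping the equivalence $m(K,T) = 0 \Leftrightarrow$ "no matching saturates $K \cup U(T)$" together with the case $|K| > |U(T)|$, which is trivial but must be stated so that the Hall step is only invoked when meaningful. No heavy computation is involved.
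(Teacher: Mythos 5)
Your route is genuinely different from the paper's, and it has a real gap. The paper's proof is a one-liner that leans on machinery already set up two paragraphs earlier: transversals of $T$ are exactly the bases of the row matroid of $D$ (because $\det(D\restriction M)\in\{0,\pm1\}$ and is nonzero precisely when $M$ is a transversal), so if the rows indexed by $K$ were linearly independent one could extend them to a basis of the row space, which is a transversal containing $K$, contradicting $m(K,T)=0$. You instead try to locate the dependency combinatorially via Hall's theorem, which, if it worked, would even give the slightly stronger conclusion that $\varphi$ can be supported on a Hall--deficient subset $K'\subseteq K$.

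The gap is the step ``if $|K|\le|U(T)|$, then $m(K,T)=0$ means the bipartite graph $(K,U(T))$ has no $K$-saturating matching.'' Only the converse is trivial: a matching saturating $K\cup U(T)$ restricts to a $K$-saturating matching in $(K,U(T))$. The direction you need says that every $K$-saturating matching $M_0$ (which may leave parts of $U(T)$ exposed) can be augmented, using $V(T)\setminus K$, to a matching that also saturates $U(T)$. That is false for general bipartite graphs --- if some $u\in U(T)$ had no neighbour at all, $K$ could be matchable while $m(K,T)=0$ --- and is true here only because $T$ is a \emph{valid} tree, so $U(T)$ always admits a saturating matching $M_1$, and one can then run an alternating-path/K\H{o}nig argument on $M_0\triangle M_1$ (equivalently, invoke the exchange axiom for the transversal matroid of $T$). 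None of this appears in your proposal; you present the implication as though it were definitional, and you flag as the ``only subtlety'' the tautological unfolding of $m(K,T)=0$ rather than this genuine augmentation step. Separately, your opening observation that $m(K,T)=0$ is equivalent to there being no $U(T)$-saturating matching in the induced subgraph on $K\cup U(T)$ is wrong except when $|K|=|U(T)|$; you abandon it later, but it muddies the write-up. In short: the Hall-based plan can be made to work, but you must prove the augmentation lemma (using validity of $T$), whereas the paper sidesteps the whole issue with the transversal-equals-basis dictionary it has already paid for.
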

\begin{proof} If $m(K,T)=0$, then the rows of $D$ corresponding to $K$ are linearly dependent since otherwise we could add more rows of $D$ and obtain a basis which corresponds to a transversal that contains $K$.
\end{proof}

\subsubsection{A determinantal version of the Nash-Williams inequality}
We will need to lower bound the probability that an element is chosen in a determinantal probability measure. The following is the analogue of the well-known Nash-Williams inequality lower bounding the effective resistance in an electric network (see \cref{rem:NW} below).
\begin{lemma}\label{lem:det_NW_Analogue} Let $\P^W$ be the determinantal probability measure associated with a subspace $W\subset \R^m$, denote by $\mathcal W\subseteq[m]$ a sample of $\P^W$ and fix $u\in [m]$. Suppose that $\xi_1,...,\xi_k$ are linearly independent vectors in $W$ such that $|\xi_i(u)|=1$ for all $i\in [k]$ and let $M$ be their invertible Gram matrix. Then 
$$ \P^W(u\in \cW) \ge \sum_{i,j=1}^k(M^{-1})_{i,j} \, .$$
\end{lemma}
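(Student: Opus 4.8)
The plan is to reduce the statement to a projection identity in $W$ and then use the variational (minimum-norm) characterization of projections, exactly in the spirit of \cref{clm:tree_matching_vector2}. Recall that $\P^W(u\in\cW)=\|P^W e_u\|^2$, so it suffices to produce a vector $w\in W$ with $w(u)=1$ and $\sum_{i\in[m]}w(i)^2 \le \bigl(\sum_{i,j}(M^{-1})_{i,j}\bigr)^{-1}$: indeed, for any such $w$, writing $w = P^W e_u + (w-P^W e_u)$ and using $\langle P^W e_u, w-P^W e_u\rangle=0$ together with $\langle e_u,w\rangle=w(u)=1$ gives $\|w\|^2 \ge \|P^W e_u\|^2 + $ (a nonnegative term), but more usefully the standard argument from \cref{clm:tree_matching_vector2} yields $\|P^W e_u\|^2 \ge 1/\|w\|^2$ once $w$ is taken in the affine slice $\{x(u)=1\}$ — so $\P^W(u\in\cW) = \|P^We_u\|^2 \ge 1/\|w\|^2 \ge \sum_{i,j}(M^{-1})_{i,j}$.

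So the whole task is to exhibit a good $w$. The natural candidate is the linear combination $w=\sum_{i=1}^k c_i \xi_i$ with coefficients chosen so that $w(u)=\sum_i c_i\xi_i(u)=1$ and $\|w\|^2$ is minimized. Since $\xi_i(u)=\pm1$, write $s_i:=\xi_i(u)\in\{-1,1\}$ and $s=(s_1,\dots,s_k)^T$; the constraint is $s^T c=1$ and $\|w\|^2 = c^T M c$. Minimizing $c^TMc$ subject to $s^Tc=1$ is a standard Lagrange-multiplier computation: the optimum is $c = M^{-1}s/(s^TM^{-1}s)$, giving minimum value $\|w\|^2 = 1/(s^TM^{-1}s)$. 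Hence $\P^W(u\in\cW)\ge s^T M^{-1}s = \sum_{i,j}(M^{-1})_{i,j}s_is_j$. When all $s_i=1$ this is exactly $\sum_{i,j}(M^{-1})_{i,j}$, which is the stated bound. I would therefore either (a) restate the lemma assuming $\xi_i(u)=1$ and note that replacing $\xi_i$ by $s_i\xi_i$ is harmless since it only changes $M_{i,j}$ to $s_is_j M_{i,j}$ and $\sum_{i,j}(M^{-1})_{i,j}$ to $\sum_{i,j}s_is_j(M^{-1})_{i,j}$ consistently, or (b) just carry the signs through as above and observe the claimed inequality is the $s\equiv\mathbf 1$ case (or that the paper's $M$ already absorbs the signs). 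The cleanest writeup: define $\xi := \sum_i (M^{-1}s)_i\,\xi_i \in W$, check $\xi(u)=s^TM^{-1}s$ by direct expansion using $\sum_i(M^{-1}s)_i \xi_i(u)=\sum_i(M^{-1}s)_i s_i = s^TM^{-1}s$, check $\|\xi\|^2 = (M^{-1}s)^T M (M^{-1}s)=s^TM^{-1}s$, so the normalized vector $w:=\xi/\xi(u)$ lies in $W$, satisfies $w(u)=1$, and has $\|w\|^2 = (s^TM^{-1}s)/(s^TM^{-1}s)^2 = 1/(s^TM^{-1}s)$.

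Then conclude: by the minimum-norm property of orthogonal projection, for $w\in W$ with $w(u)=1$ we have (as in \cref{clm:tree_matching_vector2}) $\|P^W e_u\|^2 \ge 1/\|w\|^2$; indeed setting $\beta := \|P^W e_u\|^2$ we get $1-\beta = \|e_u - P^W e_u\|^2 \le \|e_u - \beta w\|^2 = 1 - 2\beta + \beta^2\|w\|^2$, hence $\beta\|w\|^2 \ge 1$. Therefore $\P^W(u\in\cW)=\beta \ge 1/\|w\|^2 = s^T M^{-1}s = \sum_{i,j=1}^k (M^{-1})_{i,j}$ (using $s_i=1$ for all $i$, or with the sign factors included in the general statement). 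The only place any care is needed is the invertibility of $M$ and the $\pm1$ versus $+1$ convention for $\xi_i(u)$; neither is a genuine obstacle — $M$ is assumed invertible and the sign bookkeeping is a one-line remark — so I expect the proof to be short and essentially a transcription of the variational argument already used earlier in the paper.
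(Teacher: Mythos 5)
Your proof is correct and follows essentially the same route as the paper: both reduce $\P^W(u\in\cW)=\|P_W e_u\|^2$ to the squared norm of the projection of $e_u$ onto $\mathrm{span}\{\xi_1,\dots,\xi_k\}\subseteq W$ and evaluate it as $\mathbf 1^TM^{-1}\mathbf 1$ via the Gram matrix, the only difference being that you arrive at this number through the dual min-norm characterization (and a Lagrange-multiplier computation of the minimizer) rather than by directly writing down the projection matrix $Q^TM^{-1}Q$ and evaluating the quadratic form. The sign subtlety you flag is real and is handled in the paper exactly as in your option (a), by replacing $\xi_i$ with $-\xi_i$ when $\xi_i(u)=-1$ so that $Qe_u=\mathbf 1$.
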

\begin{proof} Assume without loss of generality that $\xi_i(u)=1$; otherwise replace $\xi_i$ by $-\xi_i$. By \eqref{eq:PHmarginal}, we have that $\P^W(u\in\mathcal W)=\|P_W e_u\|^2$ (as usual $e_u \in \R^m$ is the vector taking $1$ at $u$ and $0$ elsewhere) and so it is bounded from below by the squared norm of the projection of $e_u$ onto the linear span of $\xi_1,...,\xi_k$ since these belong to $W$. Let $Q$ be the $k\times m$ matrix whose rows are $\xi_1,...,\xi_k$ so that {the projection matrix onto the linear span of $\xi_1,...,\xi_k$ is $Q^T(QQ^T)^{-1}Q = Q^TM^{-1}Q$, see \cite[Chapter 12]{MatrixBook2}.} Our assumption that $\xi_i(u)=1$ implies that $e_u Q^T \in \R^{k}$ is the all-ones vector. Hence 
$$\P^W(u\in \cW) \ge e_u Q^TM^{-1}Qe_u^T = \sum_{i,j=1}^k(M^{-1})_{i,j} \, . $$
\end{proof}

The following corollary will be useful. 
\begin{corollary}\label{lem:det_NW}
Let $\P^W$ be the determinantal probability measure associated with a subspace $W\subset \R^m$, and  $\mathcal W\subseteq[m]$ be a sample of $\P^W$. Let $u\in [m]$ and $\gamma \in(0,{1 \over k})$ be fixed, where $k \in [m]$. Suppose that $\xi_1,...,\xi_k \in W$ are vectors satisfying
\begin{enumerate}[(i)]
\item $|\xi_i(u)|=1$ for all $i\in\{1,\ldots,k\}$, and,
\item $|\langle \xi_i,\xi_j\rangle|\le\gamma \min(\|\xi_i\|^2,\|\xi_j\|^2)$ for all $i \neq j \in \{1,\ldots,k\}$. 
\end{enumerate} 
Then
\[
\P^W(u\in\mathcal W) \ge \left(1-\frac{\gamma k}{1-\gamma k}\right)\cdot \sum_{i=1}^k\frac{1}{\|\xi_i\|^2}\,.
\]
\end{corollary}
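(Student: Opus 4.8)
The plan is to deduce \cref{lem:det_NW} directly from \cref{lem:det_NW_Analogue}, the only work being to control the inverse Gram matrix $M^{-1}$ of $\xi_1,\dots,\xi_k$ under the near-orthogonality hypothesis (ii). First I would write $M = \Lambda + R$, where $\Lambda = \operatorname{diag}(\|\xi_1\|^2,\dots,\|\xi_k\|^2)$ collects the diagonal and $R$ the off-diagonal entries $\langle\xi_i,\xi_j\rangle$. Conjugating by $\Lambda^{1/2}$, we get $M = \Lambda^{1/2}(\id + \widetilde R)\Lambda^{1/2}$ where $\widetilde R = \Lambda^{-1/2} R \Lambda^{-1/2}$ has zero diagonal and off-diagonal entries $\langle \xi_i,\xi_j\rangle / (\|\xi_i\|\|\xi_j\|)$, each of which is bounded in absolute value by $\gamma\min(\|\xi_i\|^2,\|\xi_j\|^2)/(\|\xi_i\|\|\xi_j\|) \le \gamma$ by (ii). Hence every entry of $\widetilde R$ is at most $\gamma$ in absolute value, and since $\widetilde R$ is a symmetric $k\times k$ matrix with zero diagonal, its operator norm is at most $\gamma(k-1) < \gamma k < 1$.

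Next I would expand $M^{-1} = \Lambda^{-1/2}(\id+\widetilde R)^{-1}\Lambda^{-1/2}$ and lower-bound $\sum_{i,j}(M^{-1})_{i,j} = \mathbf{1}^T M^{-1}\mathbf{1}$, where $\mathbf{1}$ is the all-ones vector in $\R^k$. Writing $y = \Lambda^{-1/2}\mathbf{1}$, so that $y_i = 1/\|\xi_i\|$ and $\|y\|^2 = \sum_i \|\xi_i\|^{-2}$, we have $\mathbf{1}^T M^{-1}\mathbf{1} = y^T(\id+\widetilde R)^{-1}y$. Using the Neumann series $(\id+\widetilde R)^{-1} = \id - \widetilde R(\id+\widetilde R)^{-1}$ (valid since $\|\widetilde R\|<1$), we get
\[
y^T(\id+\widetilde R)^{-1}y = \|y\|^2 - y^T\widetilde R(\id+\widetilde R)^{-1}y \ge \|y\|^2 - \|y\|^2\cdot \frac{\|\widetilde R\|}{1-\|\widetilde R\|} \ge \|y\|^2\left(1 - \frac{\gamma k}{1-\gamma k}\right),
\]
where the middle inequality uses Cauchy--Schwarz together with $\|(\id+\widetilde R)^{-1}\| \le (1-\|\widetilde R\|)^{-1}$, and the last uses $\|\widetilde R\| \le \gamma(k-1) < \gamma k$ and monotonicity of $t\mapsto t/(1-t)$ on $[0,1)$. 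Combining with \cref{lem:det_NW_Analogue} gives exactly $\P^W(u\in\cW) \ge (1-\gamma k/(1-\gamma k))\sum_{i=1}^k \|\xi_i\|^{-2}$, as claimed. (One should note the bound is only nontrivial when $\gamma k / (1-\gamma k) < 1$, i.e.\ $\gamma < 1/(2k)$; for $\gamma \in [1/(2k), 1/k)$ the right-hand side is $\le 0$ and the statement is vacuous, so there is nothing to prove.)

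The only mild subtlety — and the step I would be most careful about — is the operator-norm bound $\|\widetilde R\| \le \gamma(k-1)$: it follows from Gershgorin's disc theorem (each diagonal entry is $0$ and the absolute row sums are at most $\gamma(k-1)$), but one must make sure the hypothesis (ii) really gives entrywise control of $\widetilde R$ by $\gamma$ and not something weaker; the asymmetric-looking $\min(\|\xi_i\|^2,\|\xi_j\|^2)$ in (ii) is precisely what makes $|\langle\xi_i,\xi_j\rangle|/(\|\xi_i\|\|\xi_j\|) \le \gamma$ work out regardless of which norm is smaller. Everything else is a routine Neumann-series estimate, and no new ideas beyond \cref{lem:det_NW_Analogue} are needed.
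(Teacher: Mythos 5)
Your proof is correct and reaches the same final constant, but the mechanics are genuinely different from the paper's. The paper works with the non-symmetric normalization $D^{-1}M = I - R$ (where $D = \operatorname{diag}(\|\xi_i\|^2)$ and $R$ has entries $-\langle\xi_i,\xi_j\rangle/\|\xi_i\|^2$ off the diagonal), and then controls the Neumann series $\tilde R = \sum_{\ell\ge 1}R^\ell$ \emph{entrywise}: an easy induction gives $|(R^\ell)_{i,j}|\le \gamma^\ell k^{\ell-1}$, hence $|\tilde R_{i,j}|\le \gamma/(1-\gamma k)$, and the conclusion drops out of writing $\sum_{i,j}(M^{-1})_{i,j} = \sum_j (1+\sum_i\tilde R_{i,j})/M_{j,j}$. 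You instead pass to the symmetric normalization $\Lambda^{-1/2}M\Lambda^{-1/2} = I + \widetilde R$, invoke Gershgorin to bound the operator norm $\|\widetilde R\|\le\gamma(k-1)$, and estimate $y^T(I+\widetilde R)^{-1}y$ via Cauchy--Schwarz and the geometric series for $\|(I+\widetilde R)^{-1}\|$. Both routes prove invertibility of $M$ (and hence independence of the $\xi_i$, which \cref{lem:det_NW_Analogue} requires) along the way. What the paper's entrywise argument buys is that it avoids any appeal to spectral facts --- no Gershgorin, no operator norms --- keeping the proof at the level of elementary power-series manipulation; what yours buys is a cleaner one-line norm estimate once you are willing to use spectral theory, and in fact your intermediate bound $\gamma(k-1)/(1-\gamma(k-1))$ is marginally sharper than the stated $\gamma k/(1-\gamma k)$ before you relax it to match the statement. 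Your remark that the inequality is vacuous for $\gamma\ge 1/(2k)$ is correct and applies equally to the paper's version.
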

\begin{proof} Let $M$ be the Gram matrix of $\xi_1,...,\xi_k$, denote by $D$ the diagonal matrix with entries $D_{i,i}=M_{i,i}=\|\xi_i\|^2$ and put $R=I-D^{-1}M$. The second assumption implies that  $|R_{i,j}|\le \gamma$ for all $i, j\in [k]$ (since $R_{i,i}=0$). By induction it follows that for any integer $\ell \geq 1$ we have $|(R^\ell)_{i,j}|\leq \gamma^\ell k^{\ell-1}$ since 
$$|(R^\ell)_{i,j}| \le \sum_{t=1}^k|R_{i,t}||(R^{\ell-1})_{t,j}| \, .$$
Therefore $\tilde R:= \sum_{\ell=1}^\infty R^\ell$ converges in every coordinate and the upper bound above implies that $|\tilde R_{i,j}| \leq {\gamma \over 1- \gamma k}$. The matrix $I+\tilde R$ is the inverse of $I-R=D^{-1}M$, hence $M$ is invertible, so $\xi_1,...,\xi_k$ are independent and $M^{-1}=(I+\tilde R)D^{-1}$. 
By \cref{lem:det_NW_Analogue} $$\P^W(u\in\cW)\ge \sum_{i=1}^k\left(1 + \sum_{j=1}^k\tilde R_{j,i}\right)\cdot \frac 1{M_{i,i}}\,,$$
and the proof is concluded since 
$
\tilde R_{i,j}\ge -{\gamma \over 1- \gamma k}$.
\end{proof}

\begin{remark}\label{rem:NW}
Let us explain why \cref{lem:det_NW_Analogue} extends the classical Nash-Williams inequality from electric networks theory (see \cite[Chapter 2]{LP16}) stating that if $x\neq y$ are two vertices of a graph $G$ and $C_1,\ldots,C_k$ are disjoint edge sets each separating $x$ from $y$, then the effective resistance from $x$ to $y$ is bounded below by $\sum_{i=1}^k {1 \over |C_i|}$. Using Kirchoff's theorem and the parallel law of electric networks (see \cite[Chapter 2]{LP16} and \cite[Chapter 4]{LP16}), it is easy to see that this is equivalent to the following statement about a uniform spanning tree $\cT$ of a finite connected graph $G$: if $e=(x,y)$ is an edge of $G$ and $C_1,\ldots, C_k$ are edge sets each separating $x$ from $y$ so that $C_i \cap C_j= \{e\}$ for all $i\neq j$, then,

\be\label{eq:nash_williams}
    \P(e\in\cT)\ge \left(1+\frac{1}{S}\right)^{-1}  \quad \text{where} \quad S=\sum_{i=1}^k\frac{1}{|C_i|-1} \, ,
\ee
where we interpret $1/S=0$ in the case that $C_i=\{e\}$ for some $i$ (in this case $G \setminus \{e\}$ is disconnected and $e \in \cT$ with probability $1$).


{
    Let us show how to prove \eqref{eq:nash_williams} using the method of \cref{lem:det_NW}. 
    In this case the corresponding subspace $W\subset\R^{|E|}$ is described in \cref{example:ust}. 
    We may assume that the edge sets $C_1,\ldots,C_k$ above \eqref{eq:nash_williams} are inclusion-wise minimal, and hence each $C_i$ is a cutset separating $x$ from $y$. 
    Orient the edges of $C_i$ towards the side of the cut containing $y$, and take $\xi_i$ to be the signed indicator function of $C_i$. 
    Then $\xi_i\in W$, $\|\xi_i\|^2=|C_i|$ and $\xi_i(e)=1$ for all $i$. 
    Furthermore, $\langle\xi_i,\xi_j\rangle=\xi_i(e)\xi_j(e)=1$ for every $i\neq j$ since $C_i \cap C_j = \{e\}$. 
    Let $M$ be the Gram matrix of $\xi_1,...,\xi_k$ and observe that $M=D+\mathbf 1\cdot\mathbf 1^T$ where $\mathbf 1 \in \R^k$ is the all-ones vector and $D$ is the diagonal matrix with entries $|C_1|-1,....,|C_k|-1$. By \cref{lem:det_NW_Analogue} we obtain that
    \[
    \P(e\in\cT)\ge \sum_{i,j=0}^k(M^{-1})_{i,j}=\mathbf 1^TM^{-1}\mathbf1 =\left(1+\frac{1}{\mathbf1^TD^{-1}\mathbf 1}\right)^{-1}\,,
    \]
    where the last transition follows directly from the Sherman–Morrison formula (see \cite[Section 0.7.4]{MatrixBook}). This recovers \eqref{eq:nash_williams} since $\mathbf1^TD^{-1}\mathbf 1=\sum_{i=1}^k 1/(|C_i|-1)=S.$
    }

\end{remark}

\subsubsection{Proof of \cref{lem:exactly}}

Denote by $N_G(v)$ the neighbor set of a vertex $v$ in the graph $G$ and let $(T,o)$ be a valid rooted tree.  
Conditioned on $U(T)\subset\cT$, the event that $B_\cT(o,r)=T$ occurs if and only if for all $v\in I(T)$ we have $(N_G(v)\setminus U(T)) \cap \cT = \emptyset$. 

Let $U_0=\{u\in U(G)\setminus U(T) \mid |N_G(u)\cap V(T)|>1\}$, and note that $|U_0|\le |V(T)|^2$ since every two vertices in $V(G)$ have at most one common neighbor in $U(G)$ (since $G$ is $C_4$-free). We write $I(T)=\{v_1,...,v_{|I(T)|}\}$ and for each $i \in \{1,\ldots, |I(T)|\}$ let $u_{i,1},....,u_{i,d_i}$ be the set of vertices $N_G(v_i)\setminus (U(T)\cup U_0)$, so $d\ge d_i \ge d - |V(T)|$ since $v_i$ can share at most one neighbor with each vertex in $V(T)$.

Additionally, for every $1\le i \le |I(T)|$ and $1\le j \le d_i$ let
\[
U_{i,j} = \bigcup_{i' < i} \Big \{u_{i', j'} : i' < i, j'\in \{1,\ldots,d_{i'}\} \Big \} \bigcup \Big \{ u_{i,j'} : j'<j \Big \} \, ,
\] 
that is, $U_{i,j}$ are the vertices which precede $u_{i,j}$ in lexicographical order. 
{By the observation above,
\[
\P(B_\cT (o,r)= T  \mid U(T) \subset \cT) \le
\P\left(\left(N_G(I(T))\setminus (U(T)\cup U_0)\right)\cap\cT=\emptyset\mid U(T) \subset \cT\right).
\]
Applying the chain rule to the vertices $u_{i,j}$ in lexicographical order gives}
\begin{equation}\label{eq:prob_chain_bound}
\P(B_\cT (o,r)= T  \mid U(T) \subset \cT) \le 
\prod_{i=1}^{|I(T)|}\prod_{j=1}^{d_i}
\P(u_{i,j} \notin \cT\mid U(T) \subset \cT,~U_{i,j}\cap\cT=\emptyset)\,.
\end{equation}

We now upper bound the probabilities in the right-hand side of \eqref{eq:prob_chain_bound} using \cref{lem:det_NW}.
For every $1\le i \le |I(T)|$, we denote $K_i=\{v_1,...,v_{i-1}\}\subset I(T)$ and 
\[
\beta_i:=\frac{m(K_i,T)}{m(K_i,T)-m(K_{i+1},T)}\,.
\]
Note that $\beta_i$ is well defined by \cref{clm:tree_matching_vector1} and that $\beta_i>1$. The following lemma upper bounds each term in \eqref{eq:prob_chain_bound}.
\begin{lemma}\label{lem:using_trans_NW}

For every $1\le i \le |I(T)|$ and $1\le j \le d_i$ we have
\[
\P(u_{i,j} \in \cT\mid U(T) \subset \cT,~U_{i,j}\cap\cT=\emptyset)\ge \frac{1+O(d^{-1})}{d}\left(k+ 
\left(
\beta_i
-\frac jd
 \right)^{-1} \right)\,.
\]
\end{lemma}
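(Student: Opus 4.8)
The plan is to apply the determinantal Nash–Williams analogue (\cref{lem:det_NW}) to the conditioned measure. First I would identify the relevant subspace: by \eqref{eq:conditioning}, conditioning the measure $\P^H$ on the events $U(T)\subset\cT$ and $U_{i,j}\cap\cT=\emptyset$ yields a determinantal measure $\P^W$ for an explicit subspace $W\subset\R^m$, and the goal is to lower bound $\P^W(u_{i,j}\in\cW)$. To invoke \cref{lem:det_NW} with the element $u:=u_{i,j}$ I must produce $k+1$ vectors $\xi_0,\xi_1,\ldots,\xi_k\in W$ with $|\xi_t(u)|=1$ that are nearly orthogonal in the sense of hypothesis (ii) there, with the sum $\sum_t \|\xi_t\|^{-2}$ matching the claimed main term $\tfrac1d\bigl(k+(\beta_i-\tfrac jd)^{-1}\bigr)$.

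The key idea for constructing these vectors: the element $u_{i,j}$ is a vertex of $U$ whose $k+1$ neighbors in $G$ are $k+1$ vertices of $V$; one of them is $v_i\in I(T)$ and the other $k$ are (generically) outside $V(T)$. Write $\phi_u:=\tfrac1d B^T e_u\in H$ for the corresponding row of $\tfrac1dB$ (up to sign); note $\phi_u(u)=(k+1)/d$ and $\phi_u$ is supported on the $k+1$ coordinates indexed by $u$ together with coordinates of $u$'s ``second neighbors.'' I would take $k$ of the $\xi$'s to be built (after conditioning/projecting into $W$) from ``star'' vectors around the $k$ neighbors of $u_{i,j}$ outside $V(T)$: each such star vector is a scalar multiple of $B v_\ell$ projected appropriately, giving $\xi_\ell(u)=1$ and $\|\xi_\ell\|^2\approx d$; since these stars live around distinct vertices of $V$ and $G$ is $C_4$-free, their inner products are $O(1)$, hence $O(1/d)\cdot\|\xi_\ell\|^2$, satisfying (ii) with $\gamma=O(1/d)$. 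The remaining vector $\xi_0$ must carry the $(\beta_i-\tfrac jd)^{-1}$ contribution: this is the vector coming from $v_i$, but one must account for (a) the conditioning $U(T)\subset\cT$, which is handled via the transversal vector $\varphi$ of \cref{clm:tree_matching_vector2} applied with $K=K_i$, $v_0=v_i$, giving $\sum_{v\in J}\varphi(v)^2=\beta_i$, and (b) the conditioning $U_{i,j}\cap\cT=\emptyset$, which removes up to $(i-1)d + (j-1)$ directions and effectively replaces the $d$ available neighbors of $v_i$ by roughly $d-j$ of them — this is where the $-j/d$ correction enters. I would form $\xi_0$ as an appropriate combination of the $\varphi$-lift into $H$ with the star at $v_i$ restricted to the $\approx d-j$ not-yet-excluded neighbors, so that $\xi_0(u)=1$ and $\|\xi_0\|^2\approx d(\beta_i-j/d)$, while keeping $\xi_0$ nearly orthogonal to the other $\xi_\ell$'s (their supports overlap $u$ only, plus $O(1)$-sized intersections by $C_4$-freeness).

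Once the $k+1$ vectors are in place with $\gamma=O(1/d)$ (so $\gamma(k+1)=O(1/d)\to0$), \cref{lem:det_NW} yields
\[
\P^W(u_{i,j}\in\cW)\ \ge\ \bigl(1-O(d^{-1})\bigr)\sum_{t=0}^{k}\frac{1}{\|\xi_t\|^2}\ =\ \frac{1+O(d^{-1})}{d}\left(k+\Bigl(\beta_i-\frac jd\Bigr)^{-1}\right),
\]
which is exactly the claim. I expect the main obstacle to be the careful bookkeeping for $\xi_0$: one must verify that after projecting the transversal vector $\varphi$ (which lives on $\R^{V(T)}$) into the genuinely conditioned subspace $W\subset\R^m$ and combining it with the truncated star at $v_i$, the resulting vector indeed lies in $W$, has the right value at $u_{i,j}$, has squared norm $d(\beta_i-j/d)(1+O(d^{-1}))$, and is almost orthogonal to the other stars. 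The fact that $T$ is a tree and $G$ is $C_4$-free (so the ``second-neighborhood'' overlaps are all $O(1)$) is what makes all the error terms $O(1/d)$, with the constants depending only on $|V(T)|$ and $k$ as claimed; tracking these dependencies is routine but is the real content of the argument.
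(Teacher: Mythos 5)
Your proposal matches the paper's proof in all essential respects: you identify the conditioned subspace $W$ via \eqref{eq:conditioning}, build $k$ vectors from the rows of $B$ indexed by the neighbors $w_1,\dots,w_k$ of $u_{i,j}$ lying outside $V(T)$, build $\xi_0$ from the transversal vector $\varphi$ of \cref{clm:tree_matching_vector2} with $K=K_i$ and $v_0=v_i$, verify that the resulting squared norms are $\approx d$ and $\approx d\beta_i - j$ respectively, use $C_4$-freeness to get $O(1)$ inner products (so $\gamma=O(1/d)$), and conclude via \cref{lem:det_NW}. One small imprecision worth flagging: the other $k$ neighbors of $u_{i,j}$ lie outside $V(T)$ not merely ``generically'' but deterministically, because $u_{i,j}\notin U_0$ by construction; and you do not explicitly note the (easy) verification that the $w_t$-rows of $B$ lie in $H'$, which requires $T$ being valid — but these are minor omissions that do not affect the correctness of the plan.
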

\begin{proof}
Denote by $P:\R^U(G)\to\R^{U\setminus U_{i,j}}$ denote the orthogonal projection (i.e., coordinate restriction) onto the complement of $U_{i,j}$ in $U(G)$. Recall that $H\subset \R^{U(G)}$ denotes the row-space of the matrix $B$ and consider its subspace
\[
H'= \{\psi\in H\mid \psi(u)=0\mbox{ for all $u\in U(T)$}\}\,.
\]
By \eqref{eq:conditioning}, the conditional probability measure given $U(T) \subset  \cT,~U_{i,j}\cap\cT=\emptyset$
considered in this lemma is the determinantal probability measure associated with the subspace 
\[
W = \{ P\psi\mid\psi\in H'\}\, .
\]
In  words, to construct a vector $\xi\in W$, one starts with a vector $\psi\in H$ that vanishes on $U(T)$ (i.e., $\psi\in H'$) and restricts its coordinates to $U\setminus U_{i,j}$.

Let $N_G(u_{i,j})=\{v_i,w_1,...,w_k\}$ be the neighbors of $u_{i,j}$ in $G$. Since $u_{i,j} \not \in U_0$ it follows that $\{w_1,\ldots,w_k\} \not \in V(T)$. We aim to use \cref{lem:det_NW} so we now construct $k+1$ vectors $\xi_0,...,\xi_k\in W$, where each vector is associated to a different neighbor of $u_{i,j}$.

Firstly, for every $1\le t \le k$, let $\psi_t$ be the $w_t$-th row of $B$. If $w_t$ has a neighbor in $U(T)$, then $w_t$ must belong to $V(T)$ since $T$ is valid. Since $w_t \not \in V(T)$ we deduce that $\psi_t\in H'$. 

Secondly, apply \cref{clm:tree_matching_vector2} with $K=K_i$ and $J=V(T)\setminus K_i$ and the vertex $v_i \in J$. Note that this partition satisfies the assumption in \cref{clm:tree_matching_vector2} since $m(K_i,T)\ge m(I(T),T)>0$. Denote by $\varphi \in \R^{V(T)}$  the vector obtained by \cref{clm:tree_matching_vector2} this way. Namely, we have that (i) $\varphi^T D=0$, (ii) $\varphi(v_i)=1$, and (iii) $\sum_{v\in J}\varphi(v)^2=\beta_i.$ Here $D$ is the $V(T)\times U(T)$ submatrix of $B$. We stress that $\varphi$ is determined only by the finite tree $T$ and the set $K_i$, and in particular it does not depend on $n$.
Extend $\varphi$ to $\tilde\varphi \in \R^V$ by padding zeroes in $V(G)\setminus V(T)$ and define $\psi_0:=\tilde \varphi ^T\cdot B$. Clearly, $\psi_0$ is in the row-space $H$ of $B$, and since $\varphi^T\cdot D=0$ it follows that $\psi_0\in H'$.

We define $\xi_t:=P\psi_t$ for $t=0,...,k$ and claim the following:

\begin{enumerate}
    \item $|\xi_t(u_{i,j})|=1$ for $0\le t\le k.$ \label{itm:1}
    \item $\|\xi_t\|^2=(1+O(d^{-1}))d$ for $1\le t\le k.$\label{itm:norm_xit}
    \item $\|\xi_0\|^2=(1+O(d^{-1}))(d\beta_i-j).$
    \label{itm:norm_xi0}
    \item $|\langle\xi_t,\xi_s\rangle|= O(1)$ for every $0\le t<s\le k$.
    \label{itm:inner}
\end{enumerate}
By combining these with the facts that $\beta_i>1$ and $j\le d$, we find that the squared norms of $\xi_0,...,\xi_t$ are of order $d$ while the inner-products are of order $1$. Therefore, by \cref{lem:det_NW} it suffices to prove the above four items. 

Firstly, by definition for $1\le t \le k$ we have  $\xi_t(u_{i,j})=B(w_t,u_{i,j})=\pm1$. Also, for $t=0,$
    \[
    \xi_0(u_{i,j}) = \psi_0(u_{i,j}) = (\tilde \varphi^T B)(u_{i,j})=\varphi(v_i)\cdot B(v_i,u_{i,j})=\pm 1\,,
    \]
    where we used that $\tilde \varphi(w_t)=0$ for $1\le t\le k$ since $w_t\notin V(T)$ and that $\tilde\varphi(v_i) = \varphi(v_i)=1$.

Secondly, for $1\le t\le k$, we have 
\[
\|\xi_t\|^2 = \bigl|N_G(w_t)\setminus U_{i,j}\bigr|,
\] 
which is in $[d-|V(T)|,d]$ since $w_t$ can share at most one neighbor with each vertex in $V(T)$ (so since $U_{i,j}$ are all neighbors of $\{v_1,\ldots, v_i\}$ the number of neighbors $w_t$ has in $U_{i,j}$ is at most $|V(T)|$).


Thirdly, let $u \in U(G)\setminus (U_{i,j} \cup U_0)$. Since $\tilde \varphi$ is supported on $V(T)$ and $u$ has no neighbors in $K_i$ (since $u \not\in U_{i,j}$) it follows that if $u$ has no neighbors in $J$, then $\xi_0(u)=\psi_0(u)=0$. Also, since $u\notin U_0$, it can have more than one neighbor in $J$ only if it belongs to $U(T)$, in which case
    \[
    \xi_0(u) = (\tilde\varphi^TB)(u)=(\varphi^T D)(u)=0\, ,
    \]
    where the second equality follows since all the neighbors of $u$ belong to $V(T)$, and the last equality by the construction of $\varphi$. Therefore $\xi_0(u)\ne 0$ implies that it has one neighbor $v\in J$ and hence $\xi_0(u)=\varphi(v)\cdot B(v,u)=\pm\varphi(v)$. This gives that 
    \[
    \|\xi_0\|^2 = \sum_{v\in J}\varphi(v)^2\cdot |N_G(v)\setminus (U(T)\cup U_{i,j})|+\sum_{u\in U_0}\xi_0(u)^2\,.
    \]
    Observe that the second sum bounded by $|U_0|(k+1)^2\|\varphi\|_\infty^2=O(1)$ and that
    \[
    |N_G(v)\setminus (U(T)\cup U_{i,j})| =
    \left\{
    \begin{matrix}
        d-j-O(1)&v=v_i \\
        d-O(1)& v\in J\setminus \{v_i\}\,.
    \end{matrix}
    \right.
    \]
    Indeed, the $O(1)$ terms account for neighbors of $v$ in $U_0\cup U(T)$. Besides these neighbors, $U_{i,j}$ contains none of the neighbors of $v$ if $v\ne v_i$ and $j-1$ of them if $v=v_i$. We derive that

    \[
    \|\xi_0\|^2 = \sum_{v\in J\setminus\{v_i\}}\varphi(v)^2(d-O(1)) + \varphi(v_i)^2\cdot (d-j-O(1))+O(1)=(1+O(d^{-1}))(d\beta_i-j)\,,
    \]
where the last equality follows from $\sum_{v\in J}\varphi(v)^2=\beta_i$ and $\varphi(v_i)=1$.

Lastly, if $1\le t<s\le k$ then 
\(
|\langle \xi_t,\xi_s\rangle| = 1
\) since $w_t$ and $w_s$ share exactly one common neighbor, namely $u_{i,j}$. In addition, if $1\le t\le k$ then 
\[
|\langle \xi_0,\xi_t\rangle| \le \sum_{u\in N_G(w_t)\setminus U_{i,j}}|\xi_0(u)|\, ,
\]
since $\xi_t$ is supported on $N_G(w_t)\setminus U_{i,j}$ and has sup norm at most $1$. 
Recall that if $u\notin U_{i,j}\cup U_0$ then $\xi_0(u)\ne 0$ only if $u$ has a unique neighbor $v\in J$, and then $|\xi_0(u)|=|\varphi(v)|.$ Since $w_t$ has at most one common neighbor with every $v\in J$, we find that
\[
|\langle \xi_0,\xi_t\rangle| \le \sum_{v\in J}|\varphi(v)|+\sum_{u\in U_0}|\xi_0(u)| = O(1) \, ,
\]
since $|J|=O(1)$ and $\|\varphi\|_\infty = O(1)$ and  $\|\xi_0\|_\infty\leq (k+1)\|\varphi\|_\infty=O(1)$ and $|U_0|=O(1)$. 
\end{proof}

To conclude the proof of \cref{lem:exactly}, we combine \cref{lem:using_trans_NW}, \eqref{eq:prob_chain_bound} and $1-x \leq e^{-x}$ to obtain
\begin{align*}
\P(B_\cT (o,r)= T  \mid U(T) \subset \cT) &\le 
\exp\left(-
(1+O(d^{-1}))\sum_{i=1}^{|I(T)|}\frac 1d\sum_{j=1}^{d_i} \left(k+ 
\left(\beta_i-\frac jd \right)^{-1} \right)\right)\\
&=
\exp\left(-
(1+O(d^{-1}))\sum_{i=1}^{|I(T)|} k+\log\left(\frac{\beta_i}{\beta_i-1}\right)\right)\,,
\end{align*}
where the last equality follows from  the approximation
$$\frac 1d\sum_{j=1}^d(\beta-j/d)^{-1}=(1+O(d^{-1}))\log\left(\frac{\beta}{\beta-1}\right)$$ for every real $\beta>1$, and from $d_i=d-O(1)$, which implies that
any errors can be absorbed into the $O(d^{-1})$ error term. Note that $\beta_i/(\beta_i-1) = m(K_i,T)/m(K_{i+1},T)$ whence
\[
\sum_{i=1}^{|I(T)|} \log\left(\frac{\beta_i}{\beta_i-1}\right) = \log \left(\frac{m(\emptyset,T)}{m(I(T),T)}\right)\,. 
\]
Therefore,
\[
\P(B_\cT (o,r)= T  \mid U(T) \subset \cT) \le 
(1+O(d^{-1}))\frac{e^{-k|I(T)|} m(I(T),T)}{m(\emptyset,T)}
\]
which concludes the proof of \cref{lem:exactly}. \qed

\subsubsection{Proof of \cref{lem:no_matching}}
Let $K \subset V(T)$ denote the set of even vertices in $T$ of height smaller than $r$. Suppose, for contradiction, that $T$ is of height $r'<r$, or that it is of height $r$ and $m(I(T),T)=0$. 
We claim that this implies that $m(K,T)=0$. Indeed, in the former case, $K=V(T)$ and since $|V(T)|>|U(T)|$ we have that $m(K,T)=0$. In the latter case $K=I(T)$ so $m(K,T)=0$ is assumed to hold.

Let $\varphi\in \R^{V(T)}$ be the vector obtained by applying \cref{clm:transversal0} on $T$ with $K$. That is, $\varphi:V(T)\to \R$ is a non-zero vector such that $\varphi^TD=0$ where $D$ is the restriction of $B$ to $V(T)\times U(T)$. Extend $\varphi$ to $\tilde\varphi \in \R^V$ padding zeros in $V(G)\setminus V(T)$.  

Consider a vertex $v\in K$ with $\varphi(v)\ne 0$. There exists $u\in U(G)$ such that $v$ is its only neighbor in $K$. Indeed, since $G$ is $C_4$-free, only at most $|K|$ of the $d$ neighbors of $v$ have an additional neighbor in $K$, and we assume $|K|\le |V(T)|<d$. Therefore, $(\tilde\varphi^T B)(u)=\pm\varphi(v)\ne0$. On the other hand, for any $u\in\cT$, if $u$ does not have a neighbor in $K$, then $(\tilde\varphi^T B)(u)=0$ since $\tilde\varphi$ is zero outside of $K$; if $u$ does have a neighbor in $K$ it follows that $u\in U(T)$ by definition of $K$ and since  $B_\cT(o,r)\cong (T,o)$, hence $(\tilde\varphi^T B)(u)=(\varphi^T D)(u)=0$. 

We deduce that $\tilde\varphi^T B\ne 0$ but $(\tilde\varphi^T B)(u)=0$ for every $u\in\cT$; consequently the vector $\tilde\varphi^T B$ is orthogonal to every column of $\cT$ but there is a column of $B$ that it is not orthogonal to. It follows that $\cT$'s columns do not span the column-space of $B$ and hence $\P^H(\cT)=0$, in contradiction.    
\qed

\subsection{Proof of \cref{thm:mainTree}}

Recall the parameters $\eps, \delta$ defined in \eqref{eq:eps_del} and the subset $U_{\eps,\delta}$ defined in \cref{def:struct}. By combining \cref{lem:inclusion} and \cref{lem:exactly} we obtain the following.

\begin{corollary}\label{cor:mainUpper} Let $(T,o)$ be a valid rooted finite tree which is a subgraph of $G$ and $o\in V$. Suppose that $U(T) \cap U_{\eps, \delta}=\emptyset$ and that $m(I(T),T)>0$, then
\[
\P(B_\cT (o,r)= (T,o)) \le (1+o(1)) e^{-k |I(T)|}m(I(T),T) d^{-|U(T)|} \, .
\]    
\end{corollary}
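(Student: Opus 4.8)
The plan is to obtain the bound by a one-line conditioning argument that multiplies together the two preceding lemmas. First I would record the trivial but necessary inclusion of events: if $B_\cT(o,r)=(T,o)$ then every $u\in U(T)$ appears as a vertex of $G[\cT]$, and since $u\in U$ this forces $u\in\cT$; hence $\{B_\cT(o,r)=(T,o)\}\subseteq\{U(T)\subseteq\cT\}$. Consequently, if $\P(U(T)\subseteq\cT)=0$ the left-hand side vanishes and there is nothing to prove, so I may assume $\P(U(T)\subseteq\cT)>0$ and write
\[
\P\big(B_\cT(o,r)=(T,o)\big)=\P\big(B_\cT(o,r)=(T,o)\mid U(T)\subseteq\cT\big)\cdot\P\big(U(T)\subseteq\cT\big)\,.
\]

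Next I would bound each factor. For the first factor, \cref{lem:exactly} applies because $m(I(T),T)>0$ by hypothesis, giving $\P(B_\cT(o,r)=(T,o)\mid U(T)\subseteq\cT)\le (1+o(1))\,e^{-k|I(T)|}m(I(T),T)/m(\emptyset,T)$; here one notes $m(\emptyset,T)\ge m(I(T),T)>0$ since any matching saturating $I(T)\cup U(T)$ in particular saturates $U(T)$, so the denominator is legitimate. For the second factor, \cref{lem:inclusion} applies because $U(T)\cap U_{\eps,\delta}=\emptyset$ and $(T,o)$ is a valid finite rooted tree that is a subgraph of $G$, giving $\P(U(T)\subseteq\cT)\le (1+o(1))\,d^{-|U(T)|}m(\emptyset,T)$. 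Multiplying the two estimates, the factors $m(\emptyset,T)$ cancel and the two $(1+o(1))$ terms combine into a single $(1+o(1))$, yielding
\[
\P\big(B_\cT(o,r)=(T,o)\big)\le (1+o(1))\,e^{-k|I(T)|}m(I(T),T)\,d^{-|U(T)|}\,,
\]
which is exactly the claimed inequality.

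There is essentially no real obstacle here: the content is entirely in \cref{lem:inclusion} and \cref{lem:exactly}, and this step is just bookkeeping. The only points requiring any care are the two degenerate cases — conditioning on the event $\{U(T)\subseteq\cT\}$ when it has probability zero, and ensuring $m(\emptyset,T)\neq 0$ before dividing by it — both of which are dispatched immediately as above. One should also keep track that the implicit $o(1)$ error terms depend only on $|V(T)|$ (and on $k$), as in the source lemmas, so that when \cref{cor:mainUpper} is later summed over the $O(1)$ many tree shapes of bounded height the errors remain negligible.
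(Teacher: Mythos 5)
Your proof is correct and matches the paper's argument exactly: the corollary is obtained by conditioning on $\{U(T)\subseteq\cT\}$, applying \cref{lem:inclusion} to the marginal probability and \cref{lem:exactly} to the conditional one, and observing that the $m(\emptyset,T)$ factors cancel. Your extra care about the two degenerate cases (conditioning on a null event, and verifying $m(\emptyset,T)\ge m(I(T),T)>0$ before dividing) is sound bookkeeping that the paper leaves implicit.
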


Therefore, to prove \cref{thm:mainTree} we need to count the number of finite rooted trees that are subgraphs of $G$ and are isomorphic as rooted graphs to a given abstract finite rooted trees $(T,o)$. The following counting argument is due to M{\'e}sz{\'a}ros \cite[Lemma 3.10]{meszaros2021local}; we provide its proof in our setup here (in \cite{meszaros2021local} the setup is for determinantal hypertrees \cref{sec:hypertrees} of the complete complex only). Given a finite rooted tree $(T,o)\in \cG_\bullet$ and a vertex $v\in V$ of the underlying graph $(V,U,E)$ we define
$$ \cI( (T,o); v) = \Big \{ T'\mid (T',v) \cong (T,o), \, (T',v) \textrm{ is a subgraph of } G \Big \} \, .$$

\begin{lemma} \label{lem:count} For any finite rooted tree $(T,o)\in \cG_\bullet$ and any $v\in V$ we have 

$$ {|\cI( (T,o); v)| \over d^{|U(T)|}} \leq  { (k!)^{|U(T)|} \over  |\mathrm{Aut}(T,o)| } \, .
$$
\end{lemma}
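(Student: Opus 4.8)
The plan is to bound $|\cI((T,o);v)|$ by counting the number of ways to embed the abstract rooted tree $(T,o)$ into $G$ as a rooted subgraph, and then correct for over-counting using the automorphism group.

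First I would set up the embedding count. An embedding of $(T,o)$ into $G$ rooted at $v$ is a graph homomorphism $\phi$ from $(T,o)$ to $G$ that is injective on vertices (so that the image is an isomorphic copy) and sends $o$ to $v$. Since $(T,o)$ is valid, even vertices of $T$ go to $V$ and odd vertices to $U$. I would build $\phi$ greedily by exploring $T$ from the root in breadth-first order. At each even vertex $w$ already mapped to some $v'\in V$, its children in $T$ are odd vertices, and each must be mapped to a distinct neighbor of $v'$ in $U$ not yet used; since $\deg_G(v')=d$ and we have already used at most $|V(T)|$ of its neighbors, there are at least $d-|V(T)|$ and at most $d$ choices per child, i.e.\ $(1+o(1))d$ choices. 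At each odd vertex mapped to some $u'\in U$, its children in $T$ are even vertices; since $\deg_G(u')=k+1$ and exactly one neighbor (the parent) is already used, there are exactly $k$ remaining neighbors of $u'$ to which we must assign the (at most $k$) children of that odd vertex --- and since $T$ is valid every odd vertex has exactly $k$ children, so there are exactly $k!$ ways to assign them. Multiplying, the number of such rooted embeddings is at most $d^{|U(T)|}\cdot(k!)^{|U(T)|}$, using that $\sum_{\text{even }w}(\#\text{children of }w) = |U(T)|$ (each odd vertex has a unique parent) and $\sum_{\text{odd }w}(\#\text{children}) $ contributes the $(k!)$ factors, one per odd vertex. (Here I only need the upper bound $d$ per even-child, no $1+o(1)$ is needed for the statement, though injectivity is automatic since $G$ is $C_4$-free and $T$ is a tree --- two distinct already-placed vertices of $T$ at the same level cannot collide because that would create a short cycle; this can be checked but the clean upper bound $d$ suffices regardless.)

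Next I would pass from embeddings to subgraphs. Each subgraph $T'\in\cI((T,o);v)$ together with a fixed isomorphism $(T',v)\cong(T,o)$ corresponds to exactly $|\mathrm{Aut}(T,o)|$ distinct rooted embeddings of $(T,o)$ into $G$ with image $T'$: precomposing any embedding with a root-preserving automorphism of $(T,o)$ yields another embedding with the same image, and this action is free and transitive on the set of embeddings with image $T'$. Hence the number of rooted embeddings equals $|\cI((T,o);v)|\cdot|\mathrm{Aut}(T,o)|$. Combining with the embedding bound gives
\[
|\cI((T,o);v)|\cdot|\mathrm{Aut}(T,o)| \le d^{|U(T)|}(k!)^{|U(T)|},
\]
which rearranges to the claimed inequality
\[
\frac{|\cI((T,o);v)|}{d^{|U(T)|}} \le \frac{(k!)^{|U(T)|}}{|\mathrm{Aut}(T,o)|}.
\]

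The main obstacle, if any, is purely bookkeeping: verifying carefully that the greedy exploration indeed produces every embedding exactly once and that the odd-vertex factor is exactly $k!$ (this uses validity: odd vertices have degree $k+1$ in $T$, hence $k$ children, matching the $k$ free slots in $G$). Injectivity of the constructed map and the claim that the automorphism action on embeddings is free and transitive are both standard for tree embeddings and follow from $G$ being $C_4$-free together with $T$ being a tree; I would state these as short observations rather than belabor them. No analytic estimates are needed --- the $1+o(1)$ slack is not even required for this particular inequality.
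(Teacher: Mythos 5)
Your proof is correct and follows essentially the same approach as the paper: count rooted embeddings of $(T,o)$ into $G$ level by level (obtaining the upper bound $d^{|U(T)|}(k!)^{|U(T)|}$, with the factor $k!$ at each odd vertex coming from validity), and divide by $|\mathrm{Aut}(T,o)|$ since the automorphism group acts freely and transitively on embeddings with a fixed image. Your side remark that $C_4$-freeness forces injectivity of the greedy embedding is a nice observation but, as you note, unnecessary --- the paper likewise only needs the upper bound and explicitly allows overcounting.
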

\begin{proof} Consider a fixed arbitrary labeling of the vertices of $T$ and an embedding $m:V(T)\cup U(T) \to V(G) \cup U(G)$ of $(T,o)$ into $G$ so that $m(o)=v$; by embedding we mean that $m$ preserves adjacency and non-adjacency. The number of such $m$'s is thus precisely $|\mathrm{Aut}(T,o)||\cI( (T,o); v)|$. This number is can be easily upper bounded inductively. The root $o$ must be mapped to $v$. Next, if the root $o$ has degree $r$, there are at most $d^r$ ways of embedding the neighbors of $o$ into vertices $u_1,\ldots, u_r$ of $U(G)$ . Next, each of the $r$ neighbors of $o$ in $T$ have precisely $k$ children so the embedding $m$ must embed these children into each of the $k$ neighbors of $u_1,\ldots, u_r$ that are not $v$ (note that we do not rule out the invalid possibility that these $r$ sets of $k$ neighbors intersect; this is fine since we are only proving an upper bound) and due to the labeling we have $(k!)^r$ such possibilities. We proceed iteratively and obtain an upper bound of precisely $d^{|U(T)|} (k!)^{|U(T)|}$.
\end{proof}

\begin{proof}[Proof of \cref{thm:mainTree}]

Let $v_n$ be a uniformly drawn vertex of $V$ that is independent from $\cT$. The third item of \cref{thm:TisTight} together with \cref{cor:mainUpper} and \cref{lem:count} as well as \cref{lem:no_matching} yields that
\be\label{eq:limsup}
\P(B_{\cT_n}(v_n,r)\cong (T,o)) \leq \frac{e^{-k|I(T)|}(k!)^{|U(T)|} m(I(T),T)}{|\mathrm{Aut}(T,o)|} + o(1) \, ,
\ee
for any valid finite rooted tree $(T,o)$ of height $r$, where the $o(1)$ term  may depend on $(T,o)$. Let $\eps>0$ be arbitrary.

The first and second items of \cref{thm:TisTight} and \cref{lem:no_matching} imply that there exists $t_1 = t_1(\eps) \geq 0$ such that for all $n$ large enough
\be\label{eq:tocontradict} \sum_{(T,o) : |T|\leq t_1} \P(B_{\cT_n}(v_n,r)\cong (T,o)) \geq  1 - \eps \, ,\ee
where the sum is over all valid finite rooted trees of height $r$. Furthermore, since $B_{\T_k}(\rho,r)$ is supported in valid finite rooted trees of height $r$, 
\eqref{eq:meszaros1.4} implies that 
$$ \sum _{(T,o)}  \frac{e^{-k|I(T)|}(k!)^{|U(T)|} m(I(T),T)}{|\mathrm{Aut}(T,o)|} = 1 \, ,$$ 
so in particular there exists $t_2 = t_2(\eps)\geq 0$ such that
\be\label{eq:finiteSum}
\sum _{(T,o) : |T|\leq t_2}  \frac{e^{-k|I(T)|}(k!)^{|U(T)|} m(I(T),T)}{|\mathrm{Aut}(T,o)|} \in [1-\eps,1] \, ,
\ee
where again the sum is over all valid finite rooted tree of height $r$.
Therefore, if there exists a valid rooted tree $(T,o)$ of height $r$ with $|T|\leq \max(t_1,t_2)$ for which $m(I(T),T)>0$ and such that
$$  \P(B_{\cT_{n}}(v_{n},r)\cong (T,o)) \leq \frac{e^{-k|I(T)|}(k!)^{|U(T)|} m(I(T),T)}{|\mathrm{Aut}(T,o)|} - 2\eps +o(1) \, ,$$
for all large enough $n$,
then putting this with \eqref{eq:limsup} and \eqref{eq:finiteSum} gives a contradiction to \eqref{eq:tocontradict}. This concludes the proof.
\end{proof}

\subsection{Proof of \cref{thm:quenched}}
\cref{thm:mainTree} readily implies that $$\lim_{n\to\infty}\E\left[\frac{Y_n}{|V_n|}\right]=  \frac{e^{-k|I(T)|}(k!)^{|U(T)|} m(I(T),T)}{|\mathrm{Aut}(T,o)|}\,,$$ 
so it remain to show that $\lim_{n\to\infty}\mbox{Var}(Y_n/|V_n|)=0$. For this purpose, we need the following extension of \cref{cor:mainUpper}.

\begin{lemma}\label{lem:two_trees}
Let $(T,o)$ and $(T',o')$ be isomorphic valid rooted finite trees which are subgraphs of $G$ and $o,o'\in V$. Suppose that $U(T)\cup V(T),U(T')\cup V(T')$ and $U_{\eps, \delta}$ are pairwise disjoint
and that $m(I(T),T)>0$. Then,
\[
\P(B_\cT (o,r)= (T,o),~B_\cT (o',r)= (T',o')) \le (1+o(1)) \left(e^{-k |I(T)|}m(I(T),T) d^{-|U(T)|} \right)^2\, .
\]    
\end{lemma}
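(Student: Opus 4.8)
The plan is to mimic the proof of \cref{cor:mainUpper} but run the inclusion and exclusion arguments \emph{jointly} on the disjoint trees $T$ and $T'$. Write $S=U(T)\uplus U(T')$; by hypothesis $S\cap U_{\eps,\delta}=\emptyset$ and the vertex sets of $T$ and $T'$ are disjoint. First I would establish a two-tree analogue of \cref{lem:inclusion}, namely
\[
\P(S\subset\cT)\le (1+o(1))\,d^{-|S|}\,m(\emptyset,T)\,m(\emptyset,T')\,.
\]
Indeed, $L^-\res S$ is block-diagonal up to a small perturbation: writing $B_{Q,S}$ for the $V(T)\cup V(T')$ versus $S$ submatrix of $B$, the matchings of the disjoint forest $T\uplus T'$ that saturate $S$ are exactly pairs of matchings, one in $T$ saturating $U(T)$ and one in $T'$ saturating $U(T')$ (any cross edge would force a common $U$-neighbour of a $V(T)$ and a $V(T')$ vertex, and such a $U$-vertex has $k+1$ neighbours so cannot be incident to two saturated edges simultaneously in a matching — more simply, $T$ and $T'$ have disjoint vertex sets so there are no edges between them in $G[\cT]$). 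Hence $\det(L^-\res S)=d^{-|S|}m(\emptyset,T)m(\emptyset,T')$, and the same Minkowski-determinant-inequality step, generalized matrix determinant lemma with $C$ the matrix of the $\psi_{i,S}$ for $i\notin I$, and the Cauchy–Schwarz/Cramer bounds as in \cref{lem:inclusion} (now all over $S$, with $|S|=|U(T)|+|U(T')|=O(1)$) give the $1+o(1)$ correction, using $\delta d\to 0$.

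Next I would prove a two-tree analogue of \cref{lem:exactly}:
\[
\P\big(B_\cT(o,r)=(T,o),\,B_\cT(o',r)=(T',o')\,\big|\,S\subset\cT\big)\le(1+o(1))\,\frac{e^{-k(|I(T)|+|I(T')|)}m(I(T),T)m(I(T'),T')}{m(\emptyset,T)m(\emptyset,T')}\,.
\]
The event in question is that for every $v\in I(T)\cup I(T')$ we have $(N_G(v)\setminus S)\cap\cT=\emptyset$. As in the proof of \cref{lem:exactly} I would enumerate these ``forbidden'' vertices $u_{i,j}$ in a fixed order (first those around $I(T)$, then those around $I(T')$, say), excluding the $O(|V(T)\cup V(T')|^2)=O(1)$ vertices with more than one neighbour in $V(T)\cup V(T')$, and apply the chain bound together with \cref{lem:using_trans_NW}. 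The key point is that \cref{lem:using_trans_NW}, as proved, only uses $C_4$-freeness, validity of the tree, the Nash–Williams-type \cref{lem:det_NW}, and a transversal vector $\varphi$ supported on the relevant tree via \cref{clm:tree_matching_vector2}; it applies verbatim with $T$ replaced by the disjoint forest $T\uplus T'$ because the vector $\varphi$ from \cref{clm:tree_matching_vector2} applied to a vertex $v_i$ of $T$ (with $K$ the appropriate set of already-handled even vertices) is supported on $V(T)$ and is unaffected by $T'$, and the auxiliary row vectors $\psi_1,\dots,\psi_k$ attached to non-tree neighbours $w_t$ of $u_{i,j}$ still lie in the conditioned subspace $H'$ since $w_t\notin V(T)\cup V(T')$. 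Running the same $1-x\le e^{-x}$ telescoping then yields the product of the two single-tree bounds, with the $O(d^{-1})$ errors absorbed exactly as before.

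Finally, multiplying the two displays (as in the deduction of \cref{cor:mainUpper} from \cref{lem:inclusion} and \cref{lem:exactly}) gives
\[
\P\big(B_\cT(o,r)=(T,o),\,B_\cT(o',r)=(T',o')\big)\le(1+o(1))\prod_{T''\in\{T,T'\}}e^{-k|I(T'')|}m(I(T''),T'')d^{-|U(T'')|},
\]
and since $(T,o)\cong(T',o')$ the right-hand side is exactly $(1+o(1))\big(e^{-k|I(T)|}m(I(T),T)d^{-|U(T)|}\big)^2$, as claimed. I expect the main obstacle to be bookkeeping rather than a genuinely new idea: one must check carefully that the order of elimination of the forbidden $u_{i,j}$'s, the sets $U_{i,j}$ of previously-excluded vertices, and the transversal vectors $\varphi$ all continue to make sense when $I(T)\cup I(T')$ straddles two components — in particular that each $u_{i,j}$ genuinely has $k$ neighbours outside $V(T)\cup V(T')\cup U_{i,j}$ and that $\beta_i$ depends only on the component containing $v_i$ (with $K_i$ containing all of the other component's even vertices already, which only makes $m(K_i,\cdot)$ factor). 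Once this is verified, every estimate in \cref{lem:exactly} and \cref{lem:using_trans_NW} transfers with the single change $|V(T)|\rightsquigarrow|V(T)|+|V(T')|$ in the $O(\cdot)$ terms.
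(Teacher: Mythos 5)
Your proposal is correct and its second half (the conditional exclusion bound via \cref{lem:using_trans_NW}) matches the paper's proof of \eqref{eq:double_exclusion} essentially step for step, including the key observation that the transversal vector $\varphi$ from \cref{clm:tree_matching_vector2} is supported on whichever of $V(T)$ or $V(T')$ contains the current even vertex, that the resulting $\psi_0$ vanishes on both $U(T)$ and $U(T')$ because the trees are valid and vertex-disjoint, and that $\beta_i$ only sees one component. The first half, however, takes a genuinely different route from the paper. You re-run the whole machinery of \cref{lem:inclusion} on the union $S=U(T)\cup U(T')$, using that $L^-\res S$ is block-diagonal (in fact \emph{exactly} block-diagonal, not merely ``up to a small perturbation'': by validity, every $u\in U(T)$ has all $k+1$ of its $G$-neighbors in $V(T)$ and every $u'\in U(T')$ has all of its $G$-neighbors in $V(T')$, and since $V(T)\cap V(T')=\emptyset$ there is no common neighbor, so $(L^-)_{u,u'}=0$), whence $\det(L^-\res S)=d^{-|S|}m(\emptyset,T)m(\emptyset,T')$, and then re-verifying the Minkowski, matrix-determinant-lemma, Cauchy--Schwarz and Cramer's-rule estimates over the larger set $S$. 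The paper instead obtains the same inclusion bound in one line by invoking negative correlation \eqref{eq:negative}, $\P(U(T)\cup U(T')\subset\cT)\le\P(U(T)\subset\cT)\,\P(U(T')\subset\cT)$, and then applying \cref{lem:inclusion} separately to each tree. Your route works but is heavier; the negative-correlation shortcut avoids re-checking the determinant estimates over $S$ and is the cleaner choice. One small correction to your parenthetical: the reason the matchings of $Q$ with $S$ factor into a $T$-part and a $T'$-part is not about ``cross edges'' or about edges of $G[\cT]$, but about validity of the trees --- every $u\in U(T)$ has all its $G$-neighbors inside $V(T)$, so no matching edge can leave $V(T)\cup U(T)$.
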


\begin{proof} By \cref{lem:inclusion} and negative correlation \eqref{eq:negative} we have that
\[
\P(U(T)\cup U(T')\subset\cT)\le 
(1+o(1)) \left(d^{-|U(T)|} m(\emptyset, T)\right)^2 \, .
\]
Therefore, it remains to show that
\begin{align}
\nonumber \P(B_\cT (o,r)= (T,o),~
B_\cT (o',r)&= (T',o')
\mid U(T)\cup U(T') \subset \cT) \\ &\le (1+o(1))\left( \frac{e^{-k |I(T)|}m(I(T),T)}{m(\emptyset, T)}\right)^2\, .    \label{eq:double_exclusion}
\end{align}

The proof of \eqref{eq:double_exclusion} follows the same lines of the proof of \cref{lem:exactly}. Let $U_0=\{u\in U(G)\setminus (U(T)\cup U(T'))\mid |N_G(u)\cap (V(T)\cup V(T'))|>1\}\,,$ and note that $|U_0|=O(1)$ since $G$ is $C_4$-free. 
Denote $I(T)=\{v_1,...,v_{I(T)}\}$ and $I(T')=\{v'_1,...,v'_{I(T)}\}$ so that $v'_i$ is the image of $v_i$ under some fixed rooted-tree isomorphism. To ease notations, let $\bar d=d-O(1)$ be an integer such that every $v\in I(T)\cup I(T')$ has at least $\bar d$ neighbors outside of $U_0\cup U(T)\cup U(T')$. Let $u^1_{i,1},...,u^1_{i,\bar d}$ be a set of such neighbors of $v_i$, and $u^2_{i,1},...,u^2_{i,\bar d}$ a set of such neighbors of $v'_i$. Note that these $2|I(T)|$ neighbor sets are pairwise disjoint by the definition of $U_0$. Denote 
\[
U_{a,i,j}= \{u^{a'}_{i',j'}\mid (a',i',j')<_\mathrm{lex} (a,i,j)\}\,,
\]
where $<_{\mathrm{lex}}$ denotes the lexicographic order.
We claim that for every $1\le a\le 2$, $1\le i\le |I(T)|$ and $1\le j\le \bar d$ we have
\begin{equation}\label{eq:uaij}
\P(u^a_{i,j} \in \cT\mid U(T) \cup U(T')\subset \cT,~U_{a,i,j}\cap\cT=\emptyset)\ge \frac{1+O(d^{-1})}{d}\left(k+ 
\left(
\beta_i
-\frac jd
 \right)^{-1} \right)\,,
\end{equation}
where
\[
\beta_i:=\frac{m(K_i,T)}{m(K_i,T)-m(K_{i+1},T)}\,.
\]
Note that \eqref{eq:double_exclusion} follows from \eqref{eq:uaij} by applying a similar computation as in the last part of the proof of \cref{lem:exactly}.

To derive \eqref{eq:uaij} we follow the same argument as in the proof  of \cref{lem:using_trans_NW}, using the fact that $T$ and $T'$ are vertex disjoint.
Let $H' = H \cap [U(T)\cup U(T')]^\perp$ and $W=\{P\psi\mid\psi \in H'\}$ where $P$ is the projection given by restricting  to the coordinates not in $U_{a,i,j}$. Our goal is to bound the projection of the unit vector of $u^a_{i,j}$ to $W$ from below. The neighbors of $u^a_{i,j}$ are $v\in \{v_i,v'_i\}$ and $w_1,...,w_k \notin V(T)\cup V(T')$. As before, we use the rows $\psi_1,...,\psi_k$ of $w_1,...,w_k$ in $B$, and the linear combination $\psi_0$ given by the vector $\varphi$ from \cref{clm:tree_matching_vector2} of the rows of $V(T)$ if $v=v_i$, or of the rows of $V(T')$ if $v=v'_i$. The former vectors are in $H'$ since $w_1,...,w_k$ are not in $V(T)\cup V(T')$. For $\psi_0$, suppose, wlog, that it is a linear combination of the rows of $V(T)$. Then, $\psi_0(u)=0$ for $u\in U(T)$ by item (i) in \cref{clm:tree_matching_vector2} as before, and $\psi_0(u)=0$ for $u\in U(T')$ since $V(T)\cap V(T')=\emptyset$ whence no vertex in $V(T)$ has a neighbor in $U(T')$. 
Therefore, $\psi_0$ is also in $H'$. From this point, we define $\xi_t=P\psi_t, ~t=0,...,k$ and proceed precisely as in the proof of \cref{lem:using_trans_NW}.
\end{proof}

\begin{proof}[Proof of \cref{thm:quenched}]
Let $v_n,v_n'$ denote two independent uniform vertices from $V_n$ which are independent from $\cT$. Denote by $E$ (resp. $E'$) the event that $B_\cT(v_n,r)\cong(T,o)$ (resp. $B_\cT(v_n',r)\cong(T,o)$). We have that
\(
\E\left[Y_n^2/|V_n|^2\right]= \P(E\cap E'),
\)
and we bound this probability by considering the following cases. First, by \cref{thm:TisTight}, the probability that one of the $r$-balls around $v_n$ or $v_n'$ touch $U_{\varepsilon,\delta}$ is negligible. Second, by \eqref{eq:same_balls} the probability that the balls of radii  $2r$ around $v_n$ in $G[\cF]$ and $G[\cT]$ are different is also negligible. Therefore
\[
\P(B_\cT(v_n,r) \cap B_\cT(v_n',r) \ne\emptyset) = \P(v_n'\in B_\cF(v_n,2r))+o(1) = |V_n|^{-1}\E[|B_\cF(v_n,2r)|]+o(1)\,
\]
which is negligible since the expectation of $|B_\cF(v_n,2r)|$ is bounded as $n\to \infty$ by \eqref{eq:B_F}. In conclusion, using \cref{lem:count} and \cref{lem:two_trees} we find that, as $n\to\infty$,
\[
\P(E\cap E') \le \left(\frac{e^{-k|I(T)|}(k!)^{|U(T)|} m(I(T),T)}{|\mathrm{Aut}(T,o)|}\right)^2+o(1)\,,
\]
whence $\mbox{Var}(Y_n/|V_n|)\to 0$ as $n\to\infty$, and \cref{thm:quenched} follows by Chebyshev's inequality.
\end{proof}

\section*{Acknowledgements} We thank Rick Kenyon for many enlightening relevant discussions. The first author is supported by ERC consolidator grant 101001124 (UniversalMap) as well as ISF grants 1294/19 and 898/23. 
The second author was partially supported by the ISF grant 3464/24. 

\bibliographystyle{abbrv}
\bibliography{det}
\end{document}